\newcommand{\ZZ}{\mathbb{Z}}
\newcommand{\RR}{\mathbb{R}}
\newcommand{\CC}{\mathbb{C}}
\newcommand{\PP}{\mathbb{P}}
\newcommand{\PGL}{\mathrm{PGL}}
\newcommand{\GL}{\mathrm{GL}}
\newcommand{\cG}{\mathcal{G}}
\newcommand{\cS}{\mathcal{S}}
\newcommand{\cT}{\mathcal{T}}
\newcommand{\cX}{\mathcal{X}}
\newcommand{\cY}{\mathcal{Y}}
\newcommand{\II}{\mathbb{I}}
\newcommand{\JJ}{\mathbb{J}}
\newcommand{\newword}[1]{\textbf{\emph{#1}}}
\newcommand{\Mzero}[1]{M_{0,#1}}
\newcommand{\Mbar}[1]{\overline{M}_{0,#1}}
\newcommand{\Sym}{\mathrm{Sym}}
\newcommand{\SYT}{\mathrm{SYT}}
\newcommand{\Fl}{\mathcal{F}\ell}
\newcommand{\tr}{\widetilde{r}}
\newcommand{\tII}{\widetilde{\II}}
\newcommand{\tg}{\widetilde{\gamma}}
\newcommand{\sh}{\mathrm{shape}}
\newcommand{\rsh}{\mathrm{rshape}}
\newcommand{\rect}{\scalebox{0.25}{\includegraphics{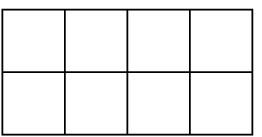}}}
\newcommand{\smallrect}{\scalebox{0.15}{\includegraphics{Rect.eps}}}
\newtheorem{Theorem}{Theorem}[section]
\newtheorem{theorem}[Theorem]{Theorem}
\newtheorem{prop}[Theorem]{Proposition}
\newtheorem{lemma}[Theorem]{Lemma}
\newtheorem{cor}[Theorem]{Corollary}
\theoremstyle{definition}
\newtheorem{remark}[Theorem]{Remark}
\newtheorem{example}[Theorem]{Example}
\title[Schubert problems from stable curves]{Schubert problems with respect to osculating flags of stable rational curves}
\author{David Speyer}
\begin{document}
\maketitle
%\section{}
%\subsection{}

\begin{abstract}
Given a point $z \in \PP^1$, let $F(z)$ be the osculating flag to the rational normal curve at point $z$.
The study of Schubert problems with respect to such flags $F(z_1)$, $F(z_2)$, \dots, $F(z_r)$ has been studied both classically and recently, especially when the points $z_i$ are real. 
Since the rational normal curve has an action of $PGL_2$, it is natural to consider the points $(z_1, \ldots, z_r)$ as living in the moduli space of $r$ distinct point in $\PP^1$ -- the famous $\Mzero{r}$. 
One can then ask to extend the results on Schubert intersections to the compactification $\Mbar{r}$.

The first part of this paper achieves this goal. We construct a flat, Cohen-Macaulay family $\cG(d,n)$ over $\Mbar{r}$, whose fibers over $\Mzero{r}$ are isomorphic to $G(d,n)$ and, given partitions $\lambda_1$, \dots, $\lambda_r$, we construct a flat Cohen-Macualay family $\cS(\lambda_1, \ldots, \lambda_r)$ over $\Mbar{r}$ whose fiber over $(z_1, \ldots, z_r) \in \Mzero{r}$ is the intersection of the Schubert varieties indexed by $\lambda_i$ with respect to the osculating flags $F(z_i)$. 

In the second part of the paper, we investigate the topology of the space $\cS(\lambda_1, \ldots, \lambda_r)(\RR)$ in the case that $\sum |\lambda_i| = \dim G(d,n)$. We show that $\cS(\lambda_1, \ldots, \lambda_r)(\RR)$ is a finite covering space of $\Mbar{r}$, and give an explicit CW decomposition of this cover whose faces are indexed by objects from the theory of Young tableaux.
\end{abstract}

\section{Introduction and Summary of Results}

Consider $2n-4$ points $z_1$, \dots, $z_{2n-4}$ in $\PP^1$. The \newword{Wronski problem} is to find degree $n-1$ rational functions $p(z)/q(z)$ on $\PP^1$ which have critical points at the $z_i$. 
Such a rational function is to be considered up to automorphisms of the target space. 
More intrinsically, let $V$ be the vector space of degree $n-1$ polynomials in $z$. Then $p(z)$ and $q(z)$ span a two dimensional subspace of $V$, and this subspace is an intrinsic characteristic of a the map $z \mapsto p(z)/q(z)$, unchanged by automorphisms of the target. 
If we consider $\langle p(z), q(z) \rangle$ as a point of the Grassmannian $G(2,n)$, then the condition that $p(z)/q(z)$ has a critical point at $z_i$ corresponds to a Schubert condition on $\langle p(z), q(z) \rangle$, and solving the Wronski problem amounts to finding the intersection of $2n-4$ Schubert conditions.

The group $\PGL_2$ acts on $\PP^1$, moving the points $z_i$.
This action extends to an action on $G(2,n)$, and to an action on the solutions of the Wronski problem.
So it is natural to consider the input to Wronski problem as $2n-4$ distinct points $z_i$ in $\PP^1$, modulo the action of $\PGL_2$. 
The space of $2n-4$ distinct points in $\PP^1$ modulo the action of $\PGL_2$ is $\Mzero{2n-4}$. 
Roughly speaking, this paper will extend this problem to the compactification $\Mbar{2n-4}$.

Our construction applies to any Grassmannian and any Schubert problem, not just the ones which arise from the Wronski problem.
To be precise, embed $\PP^1$ into $\PP^{n-1}$ by $\phi: t \mapsto (1:t:t^2:\cdots:t^{n-1})$.
Let $\phi^{(k)}(z)$ be the $k$-fold derivative of this map. Let $F_k(z)$ be the $k$-dimensional subspace of $\CC^k$ spanned by the vectors $\phi(z)$, $\phi^{(1)}(z)$, \dots, $\phi^{(k)}(z)$. If we reparametrize our source $\PP^1$, then the $\phi^{(k)}(z)$ change but the vector spaces $F_k(z)$ are unchanged. We write $F_{\bullet}(z)$ for the flag $0 \subset F_1(z) \subset F_2(z) \subset \cdots \subset F_{n-1}(z) \subset \CC^n$.
For a partition $\lambda$, let $\Omega(\lambda, z)$ be the subvariety of $G(d,n)$ which obey Schubert condition $\lambda$ with respect to the flag $F_{\bullet}(z)$. (We will review the definitions of Schubert conditions, and conventions for working with partitions, in Section~\ref{sec:notation}.)
For distinct points $z_1$, $z_2$, \dots, $z_r$ in $\PP^1$, and partitions $\lambda_1$, $\lambda_2$, \dots, $\lambda_r$, let $\Omega(\lambda_{\bullet}, z_{\bullet})$ be $\bigcap_{i=1}^r \Omega(\lambda_i, z_i)$. 

Let $U_r$ be the space of $r$ distinct ordered points in $\PP^1$ and let  $\lambda_1$, $\lambda_2$, \dots, $\lambda_r$ be the an $r$-tuple of partitions as above.
Let $\Omega(\lambda_{\bullet}) \subset U_r \times G(d,n)$ be the variety whose fiber over $(z_1, \ldots, z_r) \in U_r$ is $\Omega(\lambda_{\bullet}, z_{\bullet})$. 
This variety has been very well studied and the main results are the following:
For any $(z_1, \ldots, z_r) \in U_r$, the intersection $\bigcap \Omega(\lambda_i, z_i)$ is of the expected dimension $d(n-d) - \sum |\lambda_i|$~\cite[Theorem 2.3]{EH}. 
Combined with the obvious flatness of $\Omega(\lambda)$ over $U_r$, and the Cohen-Macaulayness of Schubert varieties~\cite{Ramanathan}, this means that 
$\Omega(\lambda_{\bullet})$ is Cohen-Macaulay, and is flat over $U_r$ of relative dimension $d(n-d) - \sum_{i=1}^r |\lambda_i|$. If $\sum |\lambda_i| = d(n-d)$, then the fiber of $\Omega(\lambda_{\bullet})$ over any point of $U_r(\RR)$ is a reduced union of real points \cite{MTV}. This last result was known as the \newword{Shapiro-Shapiro conjecture} before it was proved by Mukhin, Tarasov and Varchenko, and we will refer to it by this name within.
These are the results we will be generalizing to the case of stable curves.

The group $\PGL_2$ acts on $\PP^1$ and on $G(d,n)$; the latter action is by identifying $\CC^n$ with $\Sym^{n-1} \CC^2$ and taking the action on $G(d,n)$ induced by the action of $\GL_2$ on $\Sym^{n-1}(\CC^2)$. 
This gives rise to actions of $\PGL_2$ on $U_r$ and on $\Omega(\lambda_{\bullet}) \subset U_r \times G(d,n)$. For $r \geq 3$, the action of $\PGL_2$ on $U_r$ is free.
Free actions of reductive groups on quasi-projective varieties always have geometric quotients, so we may quotient by this action and get a family $\Omega(\lambda_{\bullet})/\PGL_2 \to U_r/\PGL_2 \cong \Mzero{r}$ which inherits the good properties of the family $\Omega(\lambda_{\bullet}) \to U_r$. 
Our goal will be to extend this family to $\Mbar{r}$. Here are our main results.

In all of the following theorems, let $0 \leq d \leq n$ and let $r \geq 3$. All partitions have at most $d$ parts, each of which are at most $n-d$.
%The universal curve over $\Mzero{r}$ is denoted $\cC$, and the marked points of $\cC$ are called $z_1$, $z_2$, \ldots, $z_r$.
Our theorems will take place within a flat, Cohen-Macaulay family $\cG(d,n)$ over $\Mbar{r}$ whose fibers over $\Mzero{r}$ are isomorphic to $G(d,n)$.
We will construct this family in Section~\ref{sec families}.

\begin{theorem} \label{thm Schub family}
For $\lambda$ a partition with at most $d$ parts, each at most $n-d$, there is a flat, Cohen-Macualay subfamily $\cS_i(\lambda)$ of $\cG(d,n)$, extending the family $\Omega(\lambda,z_i)/\PGL_2$ over $\Mzero{r}$.
For any partitions $\lambda_i$, the intersection $\bigcap_{i=1}^r \cS_i(\lambda_i)$ is Cohen-Macaulay and flat over $\Mbar{r}$ of dimension $d(n-d) - \sum |\lambda_i|$.
If $d(n-d) - \sum |\lambda_i| < 0$, then this intersection is empty.
\end{theorem}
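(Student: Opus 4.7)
My plan is to define each $\cS_i(\lambda)$ as the scheme-theoretic closure in $\cG(d,n)$ of the family $\Omega(\lambda,z_i)/\PGL_2$ from $\Mzero{r}$, and then to deduce all properties of $\cS_i(\lambda)$ and of $\bigcap_i \cS_i(\lambda_i)$ from a single fiberwise codimension bound. Since $\Mbar{r}$ is smooth and $\cG(d,n)$ is flat and Cohen-Macaulay over it, miracle flatness tells us that any closed subfamily of $\cG(d,n)$ whose fibers all have the same pure codimension is automatically flat over $\Mbar{r}$, and then Cohen-Macaulay because each fiber will be an intersection of Schubert-type subschemes of the expected codimension in a Cohen-Macaulay fiber. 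Thus the whole theorem reduces to a fiberwise codimension estimate on $\cG(d,n)_C$ for every $C \in \Mbar{r}$.

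The core task is then: for any stable $r$-pointed rational curve $C \in \Mbar{r}$, show that the fiber $(\bigcap_i \cS_i(\lambda_i))_C$ has codimension exactly $\sum |\lambda_i|$ in $\cG(d,n)_C$, and is empty when $\sum |\lambda_i| > d(n-d)$. Over $\Mzero{r}$ this is the Eisenbud--Harris theorem \cite[Theorem 2.3]{EH}. Over the boundary I would use the explicit structure of the degeneration $\cG(d,n)_C$ provided by the construction in Section~\ref{sec families}. A boundary point $C$ is a tree $T$ of $\PP^1$'s, and I expect $\cG(d,n)_C$ to be a reducible Cohen-Macaulay scheme whose components are indexed by natural combinatorial data on $T$ (for instance, a distribution of $d$ across the edges of $T$), with each component built as a fibered product of smaller Grassmannians attached to the $\PP^1$-components of $C$, glued at Schubert-type loci along the nodes. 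Under this decomposition the restriction of $\cS_i(\lambda_i)$ to a component should be an ordinary osculating Schubert variety on the relevant smaller Grassmannian, together with compatibility conditions at each node. An induction on the number of irreducible components of $C$, combined with the Eisenbud--Harris theorem on each $\PP^1$-component, then yields the desired codimension bound component by component and globally after summing.

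Once the fiberwise codimension is established on all of $\Mbar{r}$, the remainder is formal. For the individual $\cS_i(\lambda)$, pure codimension $|\lambda_i|$ on every fiber, combined with the miracle-flatness argument above, gives flatness and Cohen-Macaulayness. For the intersection, the same argument applies directly; alternatively, one invokes the general principle that a proper intersection of Cohen-Macaulay subschemes of a Cohen-Macaulay ambient scheme is Cohen-Macaulay, applied in each fiber of $\cG(d,n) \to \Mbar{r}$. Emptiness when $d(n-d) - \sum|\lambda_i| < 0$ is immediate from the codimension bound, since a nonempty subscheme would have nonnegative dimension.

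The main obstacle will be the boundary codimension count. It requires an explicit understanding of how $\cG(d,n)$ degenerates along each boundary stratum of $\Mbar{r}$ and of the combinatorial rule---presumably of Littlewood--Richardson flavor---governing how each partition $\lambda_i$ interacts with the flat limit of the osculating flag $F_\bullet(z_i)$ as the marked points collide or the curve acquires a node. Carrying out the induction on the dual graph of $C$ while keeping the Schubert data consistent at each node is the real content of the theorem; everything else follows by standard homological dimension arguments.
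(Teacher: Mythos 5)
Your overall architecture --- reduce everything to a fiberwise codimension count and then combine proper intersection of Cohen--Macaulay subschemes with miracle flatness (Corollary~\ref{proper plus miracle}), applying Eisenbud--Harris on each component of the stable curve --- is the same as the paper's. But two steps in your plan would fail as written. First, defining $\cS_i(\lambda)$ as the scheme-theoretic closure of $\Omega(\lambda,z_i)/\PGL_2$ puts you in a bind: over a base of dimension $r-3>1$ a closure is not automatically flat or Cohen--Macaulay, and its boundary fibers are exactly what you cannot compute without already knowing flatness. Moreover, to run the Brion-plus-miracle-flatness argument you need each $\cS_i(\lambda_i)$ to be Cohen--Macaulay and flat \emph{before} intersecting, and your appeal to miracle flatness omits the hypothesis that the total space be Cohen--Macaulay (equidimensional fibers alone do not suffice). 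The paper sidesteps all of this by defining everything by explicit equations: the ambient space is $\Mbar{r}\times\prod_T G(d,n)_T$, the family $\cG(d,n)$ is cut out by the graph families $\cX(p_1,p_2;q_1,q_2)$, and $\cS_i(\lambda)$ is the intersection of $\cG(d,n)$ with an honest Schubert variety $\Omega(\lambda,i)_T$ in one Grassmannian factor, Cohen--Macaulay by Ramanathan. Only after the dimension count does one learn that this agrees with the closure of the family over $\Mzero{r}$.

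Second, your guessed model of the boundary fiber is wrong, and the codimension count depends on the correct one. The fiber of $\cG(d,n)$ over a boundary point $(C,z_\bullet)$ is not assembled from ``smaller Grassmannians'' with ``$d$ distributed over the edges''; it sits inside a product of copies of the \emph{full} $G(d,n)$, one per component of $C$, and (by Proposition~\ref{twocharts}, which rests on Brion's degeneration of the diagonal of $G(d,n)\times G(d,n)$) it is the union, over assignments of a partition $\nu\in\Lambda$ to each node with complementary labels $\nu$ and $\nu^{C}$ on its two sides, of products of osculating Schubert intersections on the components. The count then works because $|\nu|+|\nu^{C}|=d(n-d)$ at each internal node, so the $(r-2)d(n-d)$-dimensional ambient fiber drops by exactly $(r-3)d(n-d)+\sum|\lambda_i|$. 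Without this explicit description --- the content of Section~\ref{sec families}, which you deferred as ``the real content'' --- the induction on the dual graph has nothing to induct on.
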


We write $\cS(\lambda_{\bullet})$ for $\bigcap_{i=1}^r \cS_i(\lambda_i)$.
We now describe the irreducible components of the fibers of $\cS(\lambda_{\bullet})$. 
Let $(C, z_{\bullet})$ be a stable genus zero curve with $r$ marked points $z_1$, \dots, $z_r$. Let $C_1$, \dots, $C_v$ be the irreducible components of $C$ and
let $w_1$, \dots, $w_{v-1}$ be the nodes of $C$. 
We will say that $x \in C_j$ is a \newword{special point} if it is either a marked point, or a node.
For a partition $\mu= (\mu_1, \mu_2, \ldots, \mu_d)$, let $\mu^{C}$ be the \newword{complementary partition} $(n-d-\mu_d, n-d-\mu_{d-1}, \ldots, n-d-\mu_1)$.
Consider the following combinatorial data: For each component $C_j$ of $C$, and each special point $x \in C_j$, assign a partition $\nu(C_j, x)$, obeying the following conditions:
\begin{enumerate}
\item If $z_i$ lies in $C_j$, we have $\nu(C_j, z_i) = \lambda_i$.
\item If $C_j$ and $C_k$ meet at the node $w$, then $\nu(C_j, w) = \nu(C_k, w)^{C}$.
\end{enumerate}
Given the stable curve $(C, z_{\bullet})$ and this combinatorial data, let $\cT(C, z_{\bullet}, \nu_{\bullet})$ be the variety $\prod_{j=1}^{v} \Omega(\nu(C_j, x_{\bullet}), x_{\bullet})$, where $x_{\bullet}$ runs over the special points of $C_j$. Let $\cT(C, z_{\bullet}) = \bigsqcup_{\nu_{\bullet}} \cT(C, z_{\bullet}, \nu_{\bullet})$, where the union runs over all combinatorial data as above. 

\begin{theorem} \label{thm fibers}
The irreducible components of the fiber of $\cS(\lambda_{\bullet})$ over $(C, z_{\bullet})$ are isomorphic to the irreducible components of $\cT(C, z_{\bullet})$.
\end{theorem}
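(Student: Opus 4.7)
My plan is to use the explicit construction of the family $\cG(d,n) \to \Mbar{r}$ from Section~\ref{sec families} to compute the fiber of $\cS(\lambda_\bullet)$ over a boundary point and identify it with $\cT(C, z_\bullet)$. Fix $(C, z_\bullet) \in \Mbar{r}$ with irreducible components $C_1, \ldots, C_v$ meeting at nodes $w_1, \ldots, w_{v-1}$. The first step is to describe the fiber of $\cG(d,n)$ itself. From the construction, I expect this fiber to decompose as a union of closed strata, one for each assignment $\nu_\bullet$ of partitions to node-ends satisfying $\nu(C_j, w) = \nu(C_k, w)^C$, of the form $\prod_{j=1}^{v} \Omega(\nu(C_j, w_\bullet), w_\bullet)$, with $w_\bullet$ running over the nodes incident to $C_j$. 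The geometric content is that as one crosses the boundary of $\Mbar{r}$ the rational normal curve breaks at each node into two pieces, and the osculating flag data on the two sides are linked exactly by the duality between $\Omega(\nu, w)$ and $\Omega(\nu^C, w)$.

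Given that description of the ambient family, the second step is to compute the intersection with each subfamily $\cS_i(\lambda_i)$. Each marked point $z_i$ lies on a unique component $C_{j(i)}$. Since $\cS_i(\lambda_i)$ is the flat closure over $\Mbar{r}$ of $\Omega(\lambda_i, z_i)/\PGL_2$, I would argue that its intersection with a stratum $\prod_j \Omega(\nu(C_j, w_\bullet), w_\bullet)$ of the fiber is obtained by adjoining the Schubert condition $\lambda_i$ at $z_i$ to the factor indexed by $j(i)$. Performing this for all $i$ simultaneously yields $\bigsqcup_{\nu_\bullet} \prod_j \Omega(\nu(C_j, x_\bullet), x_\bullet)$, where $x_\bullet$ now ranges over all special points of $C_j$; this is exactly $\cT(C, z_\bullet)$.

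Once that identification is in hand, the component correspondence follows from Theorem~\ref{thm Schub family} together with the Eisenbud-Harris dimension theorem~\cite[Theorem 2.3]{EH}. The fiber of $\cS(\lambda_\bullet)$ is pure of dimension $d(n-d) - \sum_i |\lambda_i|$ by Theorem~\ref{thm Schub family}. A direct count using $|\nu(C_j, w)| + |\nu(C_k, w)| = d(n-d)$ at each node, combined with the dimension theorem applied componentwise to each factor $\Omega(\nu(C_j, x_\bullet), x_\bullet)$, shows that every nonempty stratum $\cT(C, z_\bullet, \nu_\bullet)$ is also of this dimension. Hence no stratum lies in the closure of another, and the irreducible components of the fiber of $\cS(\lambda_\bullet)$ are in bijection with those of the strata making up $\cT(C, z_\bullet)$, realized as products of irreducible components of classical osculating Schubert intersections on each $C_j$.

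The main technical obstacle I anticipate is the second step: establishing scheme-theoretically, not merely set-theoretically, that $\cS_i(\lambda_i)$ meets the boundary fiber of $\cG(d,n)$ in precisely the claimed locus, with no embedded or extraneous components arising as $z_i$ approaches a node. This should follow from the flatness of $\cS_i(\lambda_i)$ over $\Mbar{r}$ combined with a local analysis of the degeneration of osculating Schubert varieties, carried out in the explicit charts from Section~\ref{sec families}. Everything else is a formal consequence of the flatness and Cohen-Macaulayness provided by Theorem~\ref{thm Schub family} together with the classical dimension bound.
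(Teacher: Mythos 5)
Your proposal matches the paper's proof in all essentials: the fiber of $\cG(d,n)$ is stratified by node labelings $\nu_\bullet$ via the degenerating-diagonal families $\cX(e)$, the marked-point conditions are imposed on the Grassmannian factor attached to the component containing each $z_i$, and the Eisenbud--Harris transversality for osculating flags (Proposition~\ref{Schub trans}) gives the equidimensionality of the nonempty strata, from which the component identification and (via Corollary~\ref{proper plus miracle}) the flatness follow. The ``main technical obstacle'' you flag largely dissolves in the paper's setup: $\cS_i(\lambda_i)$ is \emph{defined} as $\cG(d,n)\cap\Omega(\lambda_i,i)_T$ inside a product of Grassmannians rather than characterized as a flat closure, so the Schubert condition at $z_i$ never degenerates as $z_i$ approaches a node --- all of the degeneration is confined to the $\cX(e)$ factors, whose special fibers are computed once and for all from Brion's degeneration of the diagonal (Proposition~\ref{twocharts}).
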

We do not attempt to describe how the components of $\cS(\lambda_{\bullet})$ meet.

For many possible maps $\nu$, the product $\prod_{j=1}^{v} \Omega(\nu(C_j, x_{\bullet}), x_{\bullet})$ is empty. 
In particular, if $\sum_{x \in C_j} |\nu(C_j, x)| > d(n-d)$ for some component $j$, then we get no components for this $\nu_{\bullet}$, so we can restrict to the case that  $\sum_{x \in C_j} |\nu(C_j, x)| \leq d(n-d)$ for every $C_j$.
In the case that $\sum |\lambda_i| = d(n-d)$, this can be made into an equality:
\begin{prop}\label{dimension equality}
If $\sum |\lambda_i| = d(n-d)$, then  $\prod_{j=1}^{v} \Omega(\nu(C_j, x_{\bullet}), x_{\bullet})$ is empty unless $\sum_{x \in C_j} |\nu(C_j, x)| = d(n-d)$ for every component $C_j$.
\end{prop}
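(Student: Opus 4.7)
The plan is to sum the quantities $\sum_{x \in C_j} |\nu(C_j, x)|$ across all components $C_j$ and exploit the fact that each node contributes a partition and its complementary partition. Since each $\Omega(\nu(C_j, x_\bullet), x_\bullet)$ is a Schubert intersection on the Grassmannian $G(d,n)$, its dimension is $d(n-d) - \sum_{x \in C_j} |\nu(C_j, x)|$ by the Eisenbud--Harris theorem already cited, so nonemptiness forces the inequality $\sum_{x \in C_j} |\nu(C_j, x)| \leq d(n-d)$ for every $j$. The proposition will follow if I can show the sum of all of these quantities over $j$ equals exactly $v \cdot d(n-d)$, forcing each summand to equal $d(n-d)$.

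Next I would compute the total. Each marked point $z_i$ lies on exactly one component and contributes $|\lambda_i|$ by condition (1), so these contribute $\sum_i |\lambda_i| = d(n-d)$ in total by hypothesis. Each node $w$ lies on exactly two components $C_j$ and $C_k$ and contributes $|\nu(C_j, w)| + |\nu(C_k, w)|$. By condition (2), $\nu(C_j, w) = \nu(C_k, w)^{C}$, and for any partition $\mu$ fitting in a $d \times (n-d)$ rectangle we have $|\mu| + |\mu^C| = d(n-d)$. Since a stable genus zero curve with $v$ components has exactly $v-1$ nodes, the nodes contribute $(v-1) \cdot d(n-d)$. Adding these up,
\[
\sum_{j=1}^{v} \sum_{x \in C_j} |\nu(C_j, x)| = d(n-d) + (v-1) d(n-d) = v \cdot d(n-d).
\]

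Combined with $\sum_{x \in C_j} |\nu(C_j, x)| \leq d(n-d)$ for every $j$, this equality forces $\sum_{x \in C_j} |\nu(C_j, x)| = d(n-d)$ for each component $C_j$, completing the proof. There is no real obstacle here: the only subtlety is recognizing that the complementary partition condition at the nodes is exactly what makes the node contributions independent of the chosen $\nu_\bullet$, so that the global count is rigid.
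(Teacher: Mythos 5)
Your proof is correct and follows essentially the same route as the paper: nonemptiness forces $\sum_{x \in C_j} |\nu(C_j,x)| \leq d(n-d)$ for each component via the transversality result for osculating flags, and summing over components (with each node contributing $|\mu| + |\mu^C| = d(n-d)$ and the marked points contributing $\sum |\lambda_i| = d(n-d)$) shows the total is exactly $v \cdot d(n-d)$, forcing equality in each term. The paper phrases the final step as deducing $\sum |\lambda_t| \leq d(n-d)$ from the summed inequality and invoking the hypothesis, but this is the same rigidity argument.
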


One would like to say that we get an irreducible component of the fiber of $\cS(\lambda_{\bullet})$ for each $\nu_{\bullet}$ obeying $\sum_{x \in C_j} |\nu(C_j, x)| \leq d(n-d)$, but this isn't quite true. First of all, it may be that $ \Omega(\nu(C_j, x_{\bullet}), x_{\bullet})$ is empty even though the above inequality is satisfied.
If we restrict ourselves to those $\nu_{\bullet}$ where all the  $\Omega(\nu(C_j, x_{\bullet}), x_{\bullet})$ are nonempty, then $ \Omega(\nu(C_j, x_{\bullet}), x_{\bullet})$ is irreducible when $\sum_{x \in C_j} |\nu(C_j, x)| < d(n-d)$. So, if all the $\Omega(\nu(C_j, x_{\bullet}), x_{\bullet})$ are nonempty and all of the sums  $\sum_{x \in C_j} |\nu(C_j, x)|$ are $< d(n-d)$, then we do get one irreducible component.
However, if $\sum_{x \in C_j} |\nu(C_j, x)| = d(n-d)$, then $ \Omega(\nu(C_j, x_{\bullet}), x_{\bullet})$ consists of several points, the number of such points being given by the Littlewood-Richardson coefficient.

In particular, when $\sum |\lambda_i| = d(n-d)$ and we are over a real point of $\Mbar{r}$, we can make an especially nice statement.

\begin{theorem} \label{thm real family}
If $\sum |\lambda_i| = d(n-d)$, then the fiber of $\cS(\lambda_{\bullet})$ over $\Mbar{r}(\RR)$ is a reduced union of real points. In other words, $\cS(\lambda_{\bullet})(\RR)$ is an unbranched cover of $\Mbar{r}(\RR)$.
\end{theorem}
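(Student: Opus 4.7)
The plan is to upgrade the MTV theorem from $\Mzero{r}(\RR)$ to all of $\Mbar{r}(\RR)$ by combining flatness (Theorem~\ref{thm Schub family}) with the componentwise fiber description (Theorem~\ref{thm fibers} and Proposition~\ref{dimension equality}). Since $\sum|\lambda_i|=d(n-d)$, Theorem~\ref{thm Schub family} makes $\pi:\cS(\lambda_\bullet)\to\Mbar{r}$ flat of relative dimension $0$; together with the properness of $\cG(d,n)$ over $\Mbar{r}$ this makes $\pi$ finite flat of some degree $N$ (the Schubert intersection number $\int_{G(d,n)}\prod_i[\Omega(\lambda_i)]$). The MTV theorem already gives that $\pi$ restricts to an $N$-sheeted unbranched cover over $\Mzero{r}(\RR)$; the task is to extend this to the compactification.

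The main thrust is to show that $\pi$ is \'etale at every $(C,z_\bullet)\in\Mbar{r}(\RR)$, equivalently that the fiber there is reduced. The fiber has $\CC$-length $N$ by flatness. Theorem~\ref{thm fibers} and Proposition~\ref{dimension equality} identify its underlying set with $\cT(C,z_\bullet)=\bigsqcup_{\nu_\bullet}\prod_j\Omega(\nu(C_j,x_\bullet),x_\bullet)$, summed over admissible data $\nu_\bullet$ satisfying $\sum_{x\in C_j}|\nu(C_j,x)|=d(n-d)$ on each component. Each factor is a proper intersection (by Eisenbud--Harris) of Cohen--Macaulay Schubert subvarieties of $G(d,n)$, hence itself Cohen--Macaulay of dimension $0$ with length equal to the Littlewood--Richardson coefficient $c^{\nu_j}$; the associativity identity $\sum_{\nu_\bullet}\prod_j c^{\nu_j}=N$ in $H^*(G(d,n))$ makes the total length of $\cT(C,z_\bullet)$ also equal to $N$. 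Thus the fiber of $\pi$ is reduced iff each factor $\Omega(\nu_j,x_\bullet)$ is reduced. When $C$ is a tree of real $\PP^1$'s with all special points (marked and nodal) real, componentwise MTV gives exactly this reducedness; for general real $(C,z_\bullet)$, I would propagate using openness of the \'etale locus together with a real smoothing argument that moves non-real nodes and non-real components through the normal-crossings boundary $\Mbar{r}\setminus\Mzero{r}$ toward fully-real configurations.

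Once \'etaleness of $\pi$ is established on all of $\Mbar{r}(\RR)$, the number of real preimages is a locally constant function on $\Mbar{r}(\RR)$; MTV gives that this count equals $N$ on $\Mzero{r}(\RR)$, which is dense in every connected component of $\Mbar{r}(\RR)$, so the count equals $N$ everywhere, and $\cS(\lambda_\bullet)(\RR)\to\Mbar{r}(\RR)$ is an unbranched $N$-fold covering. The main obstacle is ruling out ramification at real boundary points whose underlying stable curve has non-real components appearing in conjugate pairs or non-real nodes on real components: on such a component the cross-ratios of the special points need not be real, so MTV does not apply directly to it, and the careful argument appears to require either an inductive Galois-equivariant analysis of the smoothing or a strengthening of Theorem~\ref{thm fibers} to a scheme-theoretic identification of the fiber of $\cS(\lambda_\bullet)$ with $\cT(C,z_\bullet)$.
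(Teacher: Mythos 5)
Your overall strategy is the paper's: describe the fiber over a real boundary point via Theorem~\ref{thm fibers} and Proposition~\ref{dimension equality}, apply Mukhin--Tarasov--Varchenko componentwise, and conclude reducedness. The length count you add (fiber length $=N$ by flatness, underlying point set of cardinality $N$ by componentwise MTV plus associativity of Littlewood--Richardson coefficients, hence reduced) is a clean way to make the reducedness conclusion rigorous from the purely set-theoretic fiber description, and in particular it already renders unnecessary the ``scheme-theoretic strengthening of Theorem~\ref{thm fibers}'' you worry about at the end.

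The genuine problem with your write-up is that you leave the proof hanging on a ``real smoothing argument'' or ``Galois-equivariant analysis'' needed to handle real points of $\Mbar{r}$ whose stable curves have non-real components or non-real nodes. No such points exist, so this machinery is not needed and the proof closes immediately. Indeed, $\Mbar{r}$ is a fine moduli space, so a real point is a stable curve $C$ over $\RR$ together with $r$ sections; each marked point is therefore a real point of $C$. Complex conjugation induces an automorphism of the dual tree fixing every leaf (each leaf is attached to the component containing the corresponding real marked point), and a tree automorphism fixing all leaves is the identity, since every edge lies on the path between two leaves and such paths are fixed pointwise. Hence every component is conjugation-stable and every node is a real point; each component, being a genus-zero real curve with at least three real special points, is $\PP^1_{\RR}$ with all its special points real. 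So the Shapiro--Shapiro/MTV theorem applies directly to every component, with no degeneration argument. With this observation inserted in place of your final paragraph, your proof is complete and coincides with the paper's.
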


The preceding results are proved in Section~\ref{sec families}. The rest of the paper analyzes the topology of $\cS(\lambda_{\bullet})(\RR)$ when $\sum |\lambda_i| = d(n-d)$. 

There is a $CW$-decomposition of $\Mbar{r}(\RR)$, which we detail in Section~\ref{sec:CW}. The maximal faces are indexed by the $(r-1)!/2$  dihedral symmetry classes of circular orderings of $\{ 1, \ldots, r \}$.
The vertices of this $CW$-decomposition are indexed by trivalent trees whose leaves are labeled by $\{ 1,2, \ldots, r \}$, and correspond to stable curves whose dual graphs are those trees.

We will describe the pull-back of this $CW$-decomposition to the cover $\cS(\lambda_{\bullet})(\RR)$.
We will call a vertex of the $CW$-decomposition a \newword{caterpillar point} if every component of the curve contains at least one marked point (see Figure~\ref{cater example}).
See Sections~\ref{sec:growth} and~\ref{sec:degrowth} for the full definitions underlying the next theorems; 
readers experienced with tableaux combinatorics may be able to guess the definition of a cylindrical growth diagram by glancing at Figure~\ref{growth example}.

We first describe the case where all the partitions are a single box.

\begin{theorem}
Let $r=d(n-d)$ and let all the $\lambda_i$ be the partition $(1)$. We describe the pullback of the $CW$-decomposition of $\Mbar{r}(\RR)$ to $\cS(\lambda_{\bullet})(\RR)$. %The vertices of this decomposition lying over caterpillar points are indexed by standard Young tableaux of shape $(n-d)^d$. 
The maximal faces of this $CW$-decomposition lying over a given maximal face $\sigma$ of $\Mbar{r}(\RR)$ are indexed by cylindrical growth diagrams. 
%By following paths through these cylindrical growth diagrams, we can read off the standard young tableaux lying over the caterpillar points in the boundary of $\sigma$.
Over a caterpillar point of $\Mbar{r}$, the vertices of $\cS(\lambda_{\bullet})(\RR)$ are indexed by standard Young tableaux of shape $(n-d)^d$. 
Let $\sigma$ be a maximal face of $\Mbar{r}$ and $\tau$ a maximal face of $\cS(\lambda_{\bullet})(\RR)$ lying above it; let $\gamma$ be the cylindrical growth diagram labeling $\tau$.
By following different paths through $\gamma$, we read off the tableaux labeling the vertices of $\tau$ over the caterpillar points of $\sigma$.
\end{theorem}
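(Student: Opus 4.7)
The plan is to leverage Theorem \ref{thm real family}: since $\cS(\lambda_\bullet)(\RR) \to \Mbar{r}(\RR)$ is a finite unbranched cover and each open maximal face of $\Mbar{r}(\RR)$ is contractible, the restriction above any maximal face $\sigma$ is a trivial finite cover. The theorem then splits into three parts: identifying the fiber over a caterpillar vertex with standard Young tableaux of shape $(n-d)^d$; indexing the sheets over $\sigma$ by cylindrical growth diagrams; and verifying that each caterpillar vertex's SYT is obtained by reading an appropriate path through the diagram of the sheet passing through that vertex.

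For the caterpillar fiber I would apply Theorem \ref{thm fibers} and Proposition \ref{dimension equality}. A trivalent caterpillar with $r$ leaves has $r-2$ components: two end components each with two marked points and one node, and $r-4$ middle components each with one marked point and two nodes. Let $\alpha_j$ denote the partition at the $j$-th node, read consistently from the left as one walks along the path. The complementation rule at nodes and the equation $\sum_{x \in C_j} |\nu(C_j, x)| = d(n-d)$ force $|\alpha_j| = j+1$ and $\alpha_1 \subset \alpha_2 \subset \cdots \subset \alpha_{r-3}$. The Pieri rule then makes the three-point Schubert intersection on each middle component either empty or a single reduced point, and similarly enumerates the single-box refinements inside each end component, so the fiber is in bijection with single-box chains $\emptyset \subset \mu_1 \subset \cdots \subset \mu_r = (n-d)^d$, i.e., with SYT of shape $(n-d)^d$.

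For the sheets over $\sigma$, the caterpillar vertices on its closure correspond to the various ways of breaking the cyclic order of the marked points into a compatible linear order. Moving between two adjacent caterpillars involves sliding a single marked point past a nearby node, and I expect this to induce on the associated SYT the elementary RSK growth move. Recording the tableau attached to each caterpillar as one walks around the cycle then produces a consistent collection of SYT whose successive transitions are elementary growth moves; the cyclic identification of the $r$ marked points on $\PP^1(\RR) \cong S^1$ promotes this collection to a single \emph{cylindrical} growth diagram. Each caterpillar vertex corresponds to one way of unrolling the cylinder into a strip, and reading the partitions along the boundary of that strip recovers the SYT above that vertex.

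The main technical obstacle is the local transition step: verifying that crossing a codimension-one wall inside $\sigma$ between two adjacent caterpillars relates the corresponding SYT by precisely the elementary growth rule rather than some twisted variant. I would choose a one-parameter family transverse to the wall along which only one node of the curve is moving, and argue that $\cS(\lambda_\bullet)$ locally factors as a product of an unchanging piece (the components away from the moving node) and a three-point Schubert problem on the component being rearranged. The transition then reduces to a Pieri-rule computation on that three-point piece, and the elementary growth rule drops out of the structure constants.
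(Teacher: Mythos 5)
Your identification of the caterpillar fiber with $\SYT(d,n)$ is essentially the paper's argument (Theorem~\ref{thm fibers}, Proposition~\ref{dimension equality}, and the Pieri rule applied component by component), and you are right that the whole theorem hinges on the local transition step you flag at the end. But the resolution you propose for that step does not work. Two adjacent caterpillar vertices of $\sigma$ are joined by an \emph{edge} of the associahedron, along which one node is smoothed; the component being rearranged therefore carries \emph{four} special points, not three, so the local model is the four-point family $\cS(\lambda,\mu,\square,\square)\to\Mbar{4}$, not a three-point problem. When the two boxes of $\mu^{C}/\lambda$ are non-adjacent, Corollary~\ref{Pieri two box} gives $c_{\lambda\mu\square\square}^{\mu^C\cdot\,}=2$ wait --- the relevant coefficient equals $2$, so the fiber over the open edge has two points, and the question is precisely \emph{which} of the two sheets attaches to which boundary label $\kappa_1,\kappa_2$ at each end of the edge. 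Intersection numbers and the Pieri rule cannot see this: they count solutions but carry no information about the real topology of the degree-$2$ cover. This is exactly the content of Theorem~\ref{basic monodromy}, which the paper proves by solving the four-point problem explicitly in Pl\"ucker coordinates (Lemma~\ref{messy}), exhibiting the solution curve as a conic, and checking via Lemma~\ref{quadratic monodromy} that its real locus is an unbranched double cover with the stated circular ordering of the six boundary points. Without this computation (or a substitute for it) you cannot rule out the ``twisted variant'' you mention, and in particular you cannot verify axiom~(2) of a cylindrical growth diagram, which is the only axiom that distinguishes the correct labeling from its twist.

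Two smaller points. First, you implicitly assume that the node label attached to a facet $\sigma_{ij}$ is well defined independent of the point chosen on that facet; this needs the continuity argument of Lemma~\ref{constant label}. Second, once the local rule is established, the clean way to finish is the paper's: a cylindrical growth diagram is determined by its restriction to a single path through $\II$ (Lemma~\ref{recursion}), each path corresponds to a caterpillar vertex, and the fiber over that vertex is in bijection with $\SYT(d,n)$; so maximal faces over $\sigma$ labeled by a given diagram $\gamma$ biject with points over that vertex labeled by the corresponding tableau, of which there is exactly one. Your ``unrolling the cylinder'' picture is the right intuition for this but should be routed through the uniqueness of the recursive extension.
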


We also describe the adjacencies between these maximal faces (Proposition~\ref{wall cross}).

Transforming from one path to another in a growth diagram changes the tableau by promotion and other related operations.
Our result is thus related to results of Purbhoo~\cite{Purb1},~\cite{Purb2}. Purbhoo indexes fibers of $\Omega((1), (1), \cdots, (1))$ over certain points of $U_r$ by standard young tableaux and relates monodromy between these points to promotion.
Purbhoo works with $U_r$; we hope to show the reader that working over $\Mbar{r}$ provides cleaner results and proofs.

We now generalize this result to arbitrary partitions obeying $\sum |\lambda_i| = d(n-d)$. For a standard Young tableau $T$ of shape $(n-d)^d$, let $\alpha_s$ be the sub-skew tableau with entries $1+\sum_{i=1}^{s-1} |\lambda_i|$, $2+\sum_{i=1}^{s-1} |\lambda_i|$, \dots, $|\lambda_s|+\sum_{i=1}^{s-1} |\lambda_i|$.  
%We say that $T$ rectifies to $\lambda_{\bullet}$ if $\alpha_i$ rectifies to a tableau of shape $\lambda_i$. 
For another standard Young tableau $T'$ of the shape $(n-d)^d$, let $\alpha'_1$, \dots, $\alpha'_r$ be the corresponding sub-skew tableau. We will say that $T$ and $T'$ are dual equivalent if each $\alpha_i$ is dual equivalent to $\alpha'_i$. (This definition implicitly depends on the cardinality of the $\lambda_i$.)

\begin{theorem}
Let the $\lambda_i$ obey $\sum_{i=1}^r |\lambda_i| = d(n-d)$. We give a $CW$-decomposition of $\cS(\lambda_{\bullet})(\RR)$ which maps down to the $CW$-decomposition of $\Mbar{d(n-d)}(\RR)$. The points of $\cS(\lambda_{\bullet})(\RR)$ over caterpillar points of $\Mbar{r}$ are indexed by dual equivalence classes of standard Young tableaux for which $\alpha_i$ rectifies to a tableau of shape $\lambda_i$. The maximal faces of $\cS(\lambda_{\bullet})(\RR)$ lying over a maximal face $\sigma$ of $\Mbar{r}(\RR)$ are indexed by dual equivalence classes of cylindrical growth diagrams with shape $(\lambda_1, \ldots, \lambda_r)$. 
Let $\sigma$ be a maximal face of $\Mbar{r}$ and $\tau$ a maximal face of $\cS(\lambda_{\bullet})(\RR)$ lying above it; let $\gamma$ be the dual equivalence class of cylindrical growth diagrams corresponding to $\tau$. By following different paths through $\gamma$, we read off the dual equivalence classes of tableaux labeling the vertices of $\tau$ over the caterpillar points of $\sigma$.
\end{theorem}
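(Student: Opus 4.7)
My approach is to reduce to the preceding theorem (single-box case) applied inside $\Mbar{d(n-d)}$, then descend to $\Mbar{r}$ modulo the bubble monodromy that realizes dual equivalence. Consider the boundary locus $B \subset \Mbar{d(n-d)}$ parametrizing curves which split as a central $r$-marked curve $(C_0, z_1, \ldots, z_r)$ together with rational bubbles $E_1, \ldots, E_r$, where $E_i$ attaches to $C_0$ at $z_i$ and carries $|\lambda_i|$ of the single-box marked points. Contracting the bubbles gives a projection $\pi \colon B \to \Mbar{r}$, and the strategy is to restrict the $CW$-decomposition of $\cS((1)^{d(n-d)})(\RR)$ to $B$ and push it down to $\Mbar{r}(\RR)$.

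By Theorem~\ref{thm fibers} applied over $B$, the fiber of $\cS((1)^{d(n-d)})$ splits into pieces indexed by partitions $\mu_i$ assigned to each node $E_i \cap C_0$; Proposition~\ref{dimension equality} forces $|\mu_i^C| = |\lambda_i|$. The $C_0$-factor of such a piece is $\bigcap_i \Omega(\mu_i^C, z_i)$ and the $E_i$-factor is $\bigcap_j \Omega((1), p_{i,j}) \cap \Omega(\mu_i, z_i)$. The sub-cover cut out by requiring $\mu_i^C = \lambda_i$ for every $i$ agrees componentwise with the pullback $\pi^{*}\cS(\lambda_\bullet)$, because the $C_0$-factor recovers exactly the Schubert intersection defining $\cS(\lambda_\bullet)$; moreover each bubble factor is a finite set of size $c^{\lambda_i}_{(1)^{|\lambda_i|}} = f^{\lambda_i}$, the number of SYT of shape $\lambda_i$.

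Next I degenerate $C_0$ to a caterpillar in $\Mbar{r}$ and each $E_i$ to a caterpillar in $\Mbar{|\lambda_i|+1}$, reaching a vertex of the $CW$-decomposition of $\Mbar{d(n-d)}$ inside $B$ lying over a caterpillar point of $\Mbar{r}$. The preceding theorem labels vertices of $\cS((1)^{d(n-d)})(\RR)$ over such a vertex by SYT $T$ of shape $(n-d)^d$. The block $\alpha_i$ records the contribution of bubble $E_i$ to the growth chain of $T$, and the constraint $\mu_i^C = \lambda_i$ defining the sub-cover translates directly into the condition that $\alpha_i$ rectifies to $\lambda_i$. The analogous analysis at maximal faces, using the growth-diagram labeling of the preceding theorem, identifies faces of the sub-cover above a maximal face $\sigma \subset \Mbar{r}(\RR)$ with cylindrical growth diagrams of shape $(\lambda_1, \ldots, \lambda_r)$, and the path-reading compatibility is inherited verbatim from the single-box statement.

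The principal obstacle is the descent from $B$ to $\Mbar{r}$. A fiber of $\pi$ is $\prod_i \Mzero{|\lambda_i|+1}$, parametrizing how the single-box marked points sit on each bubble; vertices and faces of $\cS(\lambda_\bullet)(\RR)$ correspond to connected components of the pulled-back structure over such a fiber. I therefore need to show that the monodromy on $T$ induced by moving the bubble-$i$ marked points affects only the block $\alpha_i$, and that the resulting equivalence on $\alpha_i$ is precisely dual equivalence on skew SYT of fixed outer shape. By the preceding theorem's description of monodromy via promotion (following Purbhoo~\cite{Purb1},~\cite{Purb2}), this reduces to Haiman's characterization of dual equivalence as the equivalence generated by the local promotion/jeu-de-taquin moves that preserve outer shape. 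Once this is in place, the vertex, face, and path-reading descriptions in the statement follow, and Theorem~\ref{thm real family} guarantees that the resulting combinatorial cover is the full $\cS(\lambda_\bullet)(\RR)$.
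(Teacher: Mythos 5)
Your geometric architecture coincides with the paper's proof of Theorem~\ref{dual labeling}: glue $\Mbar{r}(\RR)\times\prod_i\Mbar{|\lambda_i|+1}(\RR)$ into $\Mbar{d(n-d)}(\RR)$ as a boundary stratum, restrict the single-box cover $\cS(\square,\ldots,\square)(\RR)$ to it, and use Theorem~\ref{thm fibers} together with Proposition~\ref{dimension equality} to select the components whose central-node labels are the $\lambda_i$ (the family $\cY$ of Section~\ref{sec:degrowth}). Up to that point your reduction is sound, including the identification of the bubble factors as sets of size $f^{\lambda_i}$.

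The gap is in the descent step, which you rightly flag as the principal obstacle but then dispatch by citing ``Haiman's characterization of dual equivalence as the equivalence generated by the local promotion/jeu-de-taquin moves that preserve outer shape.'' There is no such characterization in the form you need, and the claim you are outsourcing to it splits into two statements of very different difficulty. (i) \emph{Well-definedness}: the monodromy orbit of a label is contained in a dual equivalence class. The paper proves this with no promotion combinatorics at all: crossing a wall inside a factor $\Mbar{|\lambda_m|+1}(\RR)$ leaves the rest of the fiber rigid (the continuity argument of Lemma~\ref{constant label}) and merely replaces one straight-shape tableau in $\SYT(\lambda_m)$ by another of the same shape; any two such are dual equivalent by Proposition~\ref{thing2}, and the shuffling recursion propagates this through the whole diagram. (ii) \emph{Injectivity}: each orbit is an entire dual equivalence class. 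Your route would require showing that every elementary dual equivalence on the block $\alpha_i$ is actually realized by a loop in $\Mzero{|\lambda_i|+1}(\RR)$ --- a transitivity statement you neither prove nor reduce to anything proved earlier. The paper avoids (ii) entirely by counting: the relevant cover has degree equal to the Littlewood--Richardson coefficient, which Proposition~\ref{decgd basics} shows is exactly the number of decgds of shape $(\lambda_1,\ldots,\lambda_r)$, so the well-defined surjection onto decgds is forced to be a bijection. Without either the degree count or a genuine transitivity argument, your proposal establishes neither direction of the identification of orbits with dual equivalence classes. I recommend replacing the appeal to the ``characterization'' with the Proposition~\ref{thing2} argument for (i) and the counting argument for (ii).
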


We phrase the theorem here in a manner designed to get to the result quickly. 
In Section~\ref{sec:degrowth} we develop a theory of dual equivalence classes of cylindrical growth diagrams; we present and prove the result in that language.

I originally intended to conclude with an appendix discussing the relations between dual equivalence classes of growth diagrams, cartons \cite{TY}, hives and the octahedron recurrence \cite{KTW, HK} , but it has grown too lengthy and will be a separate paper.

\subsection{Acknowledgements}

This paper has developed over several years, and benefited from conversations with many mathematicians. Particularly, I would like to thank Allen Knutson, Joel Kamnitzer, Kevin Purbhoo and Frank Sottile.
Some of the research in this paper was done while I was supported by a research fellowship from the Clay Mathematical Institute.

%
%Finally, in an appendix, we describe the relation between dual equivalence classes of cylindrical growth diagrams, and more standard objects such as cartons, semi-standard tableaux with lattice reading words, and hives.

\section{Notation and background} \label{sec:notation}

We write $[n]$ for $\{ 1,\ldots, n \}$. For any finite set $S$, we write $\binom{S}{d}$ for the subsets of $S$ of size $d$.
We will sometimes abbreviate $S \cup \{ i,j \}$ by $Sij$ or $S \setminus \{ k \}$ by $S \setminus k$.

%For any finite set $N$, we define $\Mbar{N}$ to be isomorphic to $\Mbar{|N|}$, but with the points labeled by elements of $N$ rather than $\{ 1,2,\ldots, |N| \}$.
%Similarly, define $\Mzero{N}$.

We often want to work with partitions with at most $d$ parts, each at most $n-d$. The set of all such partitions will be denoted $\Lambda$, or $\Lambda(d,n)$ if necessary.
We denote the partition  $(0,0,0,\ldots)$ by $\emptyset$, $(1,0,0,\ldots)$ by $\square$ and $(n-d, n-d, \ldots, n-d)$ by $\rect$.
As mentioned in the introduction, set $(\lambda_1, \lambda_2, \ldots, \lambda_d)^C = (n-d-\lambda_d, \cdots, n-d-\lambda_2, n-d-\lambda_1)$.

For partitions $\lambda_1$, $\lambda_2$, \dots, $\lambda_s$ and $\mu$, with $|\mu| = \sum |\lambda_i|$, we write $c_{\lambda_1 \lambda_2 \cdots \lambda_s}^{\mu}$ for the Littlewood-Richardson coefficient.
We will often use the identity
$$c_{\lambda_1 \lambda_2 \cdots \lambda_r}^{\smallrect} = c_{\lambda_1 \lambda_2 \cdots \lambda_{r-1}}^{\lambda_r^C}.$$
%We add the less standard notation 
%$$c_{\lambda_1 \lambda_2 \cdots \lambda_r} := c_{\lambda_1 \cdots \lambda_r}^{\emptyset^C} = c_{\lambda_1 \cdots \lambda_{r-1}}^{\lambda_r^C}.$$
%This notation is used when $\sum |\lambda_i| = d(n-d)$.

For any two partitions $\mu$, $\nu$ with $\mu \subseteq \nu$, let $\SYT(\nu/\mu)$ be the set of chains of partitions which grow from $\mu$ to $\nu$ adding one box at a time.
%we write $\SYT$ for $\bigcup_{\mu \subseteq \nu} \SYT(\nu/\mu)$.
For $T \in \SYT(\nu/\mu)$, we write $\sh(T) = \nu/\mu$.
If $\sh(T) = \lambda/\emptyset$, we say that $T$ \newword{is of straight shape} and we also write $\sh(T) = \lambda$.
We adopt the abbreviations $\SYT(\lambda)$ for $\SYT(\lambda/\emptyset)$, and $\SYT(d,n)$ for $\SYT(\rect)$.

Let $n-d \geq \lambda_1 \geq \lambda_2 \geq \cdots \geq \lambda_d \geq 0$ be a partition in $\Lambda(d,n)$, where we pad by zeroes as necessary. Then we define $I(\lambda)$ to be the subset $(\lambda_d+1, \lambda_{d-1}+2, \ldots, \lambda_1+d)$ of $[n]$. This is a bijection between $\Lambda$ and $\binom{[n]}{d}$.

For a flag $0=F_0 \subset F_1 \subset F_2 \subset \cdots \subset F_{n-1} \subset F_n = \CC^n$, with $\dim F_i =i$, and a partition $\lambda \in \Lambda$, we say that a $d$-plane $V$ in $\CC^n$ obeys the Schubert condition $\lambda$ with respect to $F_{\bullet}$ if $\dim (V \cap F_i) \geq \# (I(\lambda) \cap \{n-i+1, n-i+2, \ldots, n \})$.
The variety of $d$-planes obeying Schubert condition $\lambda$ with respect to $F_{\bullet}$ is denoted $\Omega(\lambda, F_{\bullet})$. 
It has codimension $|\lambda|$. The cohomology class $[\Omega(\lambda, F_{\bullet})] \in H^{2 |\lambda|}(G(d,n))$ is independent of $F_{\bullet}$, so we will drop the flag when speaking only of cohomology classes. 
We have
$$\prod_{i=1}^s [\Omega(\lambda_i)] = \sum c_{\lambda_1 \cdots \lambda_s}^{\mu} [\Omega(\mu)].$$
If $\sum |\lambda_i| = d(n-d)$ then $\prod [\Omega(\lambda_i)] $ is $c_{\lambda_1 \lambda_2 \cdots \lambda_r}^{\smallrect}$ times the fundamental class. If $F_1$, $F_2$, \dots, $F_r$ are generic flags then this intersection number is geometrically meaningful: $\bigcap_{i=1}^r \Omega (\lambda_i, F_i)$ consists of $c_{\lambda_1 \lambda_2 \cdots \lambda_r}^{\smallrect}$ reduced points.

For $z \in \mathbb{C}^1$, let $F_{\bullet}(z)$ be the flag spanned by the top rows of the matrix
$$\begin{pmatrix}
1 & z & z^2 & \cdots & z^n \\
0 & 1 & 2 z & \cdots & n z^{n-1} \\
0 & 0 &  2 & \cdots & n(n-1) z^{n-2} \\
& & & \ddots & \\
0 & 0 & 0 & \cdots & n! 
\end{pmatrix}.$$
We can extend this continuously to any $z \in \PP^1$ by letting $F_{\bullet}(\infty)$ be spanned by the top rows of
$$\begin{pmatrix}
0 & \cdots & 0 & 0 & 1 \\
0 & \cdots & 0 & 1 & 0 \\
0 & \cdots & 1 & 0 & 0 \\
&  \iddots & &  & & \\
1 & 0 & 0 & \cdots & 0
\end{pmatrix}.$$

These flags turn out to always be generic enough to give geometrically meaningful intersections:
\begin{prop}[\cite{EH}] \label{Schub trans}
For any distinct points $z_1$, $z_2$, \dots, $z_r$ in $\PP^1$, and any partitions $\lambda_1$, $\lambda_2$, \dots, $\lambda_r$, the intersection $\bigcap \Omega(\lambda_i, F_{\bullet}(z_i))$ has codimension $\sum |\lambda_i|$. If the product $\prod [\Omega(\lambda_i)]$ in $H^{\ast}(G(d,n))$ is zero, then this intersection is empty.
\end{prop}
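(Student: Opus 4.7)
The plan is to proceed by induction on $r$, with the base cases $r = 1, 2$ handled directly: $r = 1$ is trivial, and $r = 2$ follows from $\PGL_2$-equivariance (used to move $(z_1, z_2)$ to $(0, \infty)$) together with the classical fact that the Richardson variety $\Omega(\lambda_1, F_\bullet(0)) \cap \Omega(\lambda_2, F_\bullet(\infty))$ of two opposite Schubert varieties has codimension $|\lambda_1| + |\lambda_2|$ when nonempty, and is nonempty iff $[\Omega(\lambda_1)] \cdot [\Omega(\lambda_2)] \ne 0$, i.e., iff $\lambda_1 \subseteq \lambda_2^C$.

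For $r \ge 3$, the idea is to degenerate the parametrization. Consider a stable $r$-pointed rational curve family over $\Spec \CC[[t]]$ whose generic fiber is a smooth $\PP^1$ with distinct marked points $z_i(t)$ and whose special fiber is a two-component nodal curve $C_1 \cup C_2$, joined at a node $p$, with $z_1, \ldots, z_{r-2}$ on $C_1$ and $z_{r-1}, z_r$ on $C_2$. The Eisenbud-Harris theory of limit linear series identifies the flat limit $\cY_0$ of the family $\cY_t = \bigcap_i \Omega(\lambda_i, F_\bullet(z_i(t)))$ with the moduli of limit linear series on the nodal curve: pairs consisting of a $d$-dimensional linear series on $C_1$ with ramification $(\lambda_1, \ldots, \lambda_{r-2}, \nu^C)$ at the marked/nodal points and a linear series on $C_2$ with ramification $(\lambda_{r-1}, \lambda_r, \nu)$, where $\nu \in \Lambda$ ranges over possible \emph{types at the node}.

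By the inductive hypothesis applied to $C_1$ (which has $r-1 < r$ marked points) and to $C_2$ (with $3$ marked points, which we treat as an additional base case; see below), each piece has the expected codimension, so that for each $\nu$ the component of the special fiber of type $\nu$ satisfies
\[
\dim \cY_0^\nu = \bigl(d(n-d) - \sum_{i < r-1} |\lambda_i| - |\nu^C|\bigr) + \bigl(d(n-d) - |\lambda_{r-1}| - |\lambda_r| - |\nu|\bigr) = d(n-d) - \sum_i |\lambda_i|,
\]
using the identity $|\nu| + |\nu^C| = d(n-d)$. Taking the union over $\nu$, $\cY_0$ is equidimensional of dimension $d(n-d) - \sum |\lambda_i|$. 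Upper semicontinuity of fiber dimension in the flat family $\cY \to \Spec \CC[[t]]$ then forces $\dim \cY_t \le \dim \cY_0$ for generic $t$, hence $\mathrm{codim}\, \cY_t \ge \sum |\lambda_i|$; the reverse inequality is automatic from excess intersection when $\cY_t$ is nonempty, and $\PGL_2$-equivariance plus irreducibility of $U_r$ propagates the statement to every configuration. For the emptiness claim, nonnegativity of Littlewood-Richardson coefficients shows that if $\prod_i [\Omega(\lambda_i)] = 0$ then for every $\nu$ at least one of the two cohomological products (the one for $C_1$ or the one for $C_2$) vanishes, so by induction at least one of the two factors of $\cY_0^\nu$ is empty; hence $\cY_0 = \emptyset$ and thus $\cY_t = \emptyset$.

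The main obstacle is twofold. First, the identification of the flat limit $\cY_0$ with the moduli of limit linear series: this is the technical heart of \cite{EH}, and is established by a careful infinitesimal analysis of how ramification conditions on osculating flags degenerate across a forming node. Second, the $r = 3$ base case: after setting $(z_1, z_2, z_3) = (0, 1, \infty)$ by $\PGL_2$, one must verify that $\Omega(\lambda_2, F_\bullet(1))$ meets the Richardson variety $\Omega(\lambda_1, F_\bullet(0)) \cap \Omega(\lambda_3, F_\bullet(\infty))$ in codimension $|\lambda_2|$. This can be done via a $\CC^\ast$-torus specialization of the Richardson variety (using the $\mathrm{diag}(s, 1) \in \GL_2$ action on the rational normal curve, which fixes $0$ and $\infty$) combined with an explicit tangent-space check at the coordinate-subspace fixed points, or simply invoked as a black box from the stronger MTV transversality result. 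The subsequent induction and dimension bookkeeping are then routine.
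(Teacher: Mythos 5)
The first thing to say is that the paper does not prove this proposition: it is quoted from Eisenbud and Harris \cite{EH} as background (see the reference to \cite[Theorem 2.3]{EH} in the introduction), so there is no in-paper argument to compare yours against. Judged on its own terms, your outline is a sensible degeneration strategy, but it is not yet a proof. The two steps you yourself flag as ``the main obstacle'' --- the containment of the flat limit $\cY_0$ in the union over $\nu$ of products of Schubert problems on the two components, and the $r=3$ base case --- are exactly where all of the content of the statement lives, and deferring both to black boxes leaves essentially nothing established. Worse, in the context of this paper the first black box is circular: the description of boundary fibers of $\cS(\lambda_{\bullet})$ as $\bigcup_{\nu}\prod_j \Omega(\nu(C_j,x_{\bullet}),x_{\bullet})$ (Theorem~\ref{thm fibers}) is \emph{deduced from} Proposition~\ref{Schub trans} in Section~\ref{sec families}, so it cannot be used as an input to it. Invoking \cite{MTV} for the base case is likewise backwards, and in any case their transversality statement concerns real points in the top-degree case $\sum|\lambda_i|=d(n-d)$, so extracting from it the arbitrary-codimension complex statement requires exactly the kind of argument you are trying to avoid.

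There is also an independent logical gap at the end: ``upper semicontinuity \dots forces $\dim\cY_t\le\dim\cY_0$ for generic $t$; \dots $\PGL_2$-equivariance plus irreducibility of $U_r$ propagates the statement to every configuration.'' Semicontinuity only gives the dimension bound on a dense open subset of $U_r$; the locus where a fiber of $\Omega(\lambda_{\bullet})\to U_r$ acquires an excess-dimensional component is \emph{closed}, and neither irreducibility of $U_r$ nor $\PGL_2$-equivariance rules out a nonempty proper closed invariant locus of bad configurations (the $\PGL_2$-orbits are only three-dimensional). Since the proposition is a statement about \emph{every} configuration of distinct points, this propagation step fails as written. This is precisely why the argument in \cite{EH} is uniform in the configuration: it rests on the Pl\"ucker bound that the total inflection of a fixed $V\in G(d,n)$ over all points of $\PP^1$ equals $d(n-d)$ (the degree of the Wronskian), so that imposing an extra condition $\Omega(\square,z_0)$ at a general point cuts every component of $\bigcap\Omega(\lambda_i,z_i)$ properly, and one can induct. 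If you want a self-contained proof, that route --- or a version of your nodal degeneration that degenerates from the \emph{given} configuration and controls all components of the total space, not just the generic fiber --- is what needs to be written out.
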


We shorten $\Omega(\lambda, F_{\bullet}(z))$ to $\Omega(\lambda, z)$. We give an explicit formula for $\Omega(\square, z)$.

\begin{prop} \label{wronski equation}
Let $z \in \PP^1$ and let $(p_I)$ be a point of $G(d,n)$, given as a list of $\binom{n}{d}$ Pl\"ucker coordinates $p_I$. For $I \in \binom{[n]}{d}$, define 
$$\Delta(I) = \prod_{\substack{a,b \in I \\ a < b}} (b-a) \ \mbox{and} \ \ell(I) = \sum_{i \in I} i - \binom{d}{2}.$$ 
Then $(p_I)$ obeys the Schubert condition $\square$ with respect to $F_{\bullet}(z)$ if and only if
$$\sum_{I \in \binom{[n]}{d}} \Delta(I) \cdot P_I \cdot (-z)^{d(n-d)-\ell(I)}=0.$$
\end{prop}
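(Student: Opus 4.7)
My plan is to translate the Schubert condition $\square$ into the vanishing of an explicit $n \times n$ determinant, Laplace-expand that determinant to read off a linear equation in the Plücker coordinates of $V$, and evaluate the minors of the osculating-flag matrix by reducing them to a Vandermonde.

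First I would unpack the Schubert condition. Since $I(\square) = \{1, 2, \ldots, d-1, d+1\}$, a quick check of the inequalities $\dim(V \cap F_i) \geq \#(I(\square) \cap \{n-i+1, \ldots, n\})$ shows that the only nontrivial one is $\dim(V \cap F_{n-d}(z)) \geq 1$. Because $\dim V + \dim F_{n-d}(z) = n$, this is equivalent to the $n \times n$ matrix $N(z)$ obtained by stacking a basis of $V$ atop the first $n-d$ rows of the matrix defining $F_\bullet(z)$ having vanishing determinant. Laplace expansion of $\det N(z)$ along the top $d$ rows yields
$$\det N(z) = \sum_{I \in \binom{[n]}{d}} \epsilon(I) \, p_I \, M_{I^c}(z),$$
where $\epsilon(I) = (-1)^{\sum_{i \in I} i - \binom{d+1}{2}}$ is the Laplace sign and $M_{I^c}(z)$ is the minor of the osculating-flag matrix on the columns indexed by $I^c$.

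The technical core is computing $M_{I^c}(z)$. Writing $I^c = \{i_1 < \cdots < i_{n-d}\}$, the $(r,s)$-entry of the relevant submatrix is $\frac{(i_s-1)!}{(i_s-r)!} z^{i_s - r}$ (and zero when $r > i_s$). Factoring $z^{i_s-1}$ out of each column and $z^{1-r}$ out of each row reduces the problem to computing the determinant of the matrix whose $(r,s)$-entry is the falling factorial $(i_s-1)^{\underline{r-1}}$. Since that falling factorial is a monic polynomial of degree $r-1$ in $(i_s-1)$, unitriangular row operations convert it to the ordinary Vandermonde $(i_s-1)^{r-1}$, whose determinant is $\prod_{s<s'}(i_{s'} - i_s) = \Delta(I^c)$. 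Reassembling the extracted powers of $z$ gives
$$M_{I^c}(z) = \Delta(I^c) \, z^{\sum_{i \in I^c} i - \binom{n-d+1}{2}}.$$

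Finally, to match the statement I would use $\sum_{i \in I^c} i = \binom{n+1}{2} - \sum_{i \in I} i$ together with the definitions of $\ell(I)$ and $d(n-d)$ to verify that the exponent of $z$ in $M_{I^c}(z)$ equals $d(n-d) - \ell(I)$ up to an additive $I$-independent constant, and that $\epsilon(I)$ agrees with $(-1)^{d(n-d) - \ell(I)}$ up to a multiplicative $I$-independent sign; dividing through by these $I$-independent nonzero prefactors does not change the vanishing locus and produces the displayed equation. The main (and really only) obstacle is the sign-and-exponent bookkeeping in this last matching step, including reconciling $\Delta(I^c)$ as it appears naturally from the Vandermonde with the $\Delta(I)$ that appears in the statement; the geometric input, namely Laplace expansion plus a Vandermonde evaluation, is routine.
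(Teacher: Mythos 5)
Your overall strategy is sound and is genuinely different from the paper's proof, which is simply a citation to Purbhoo's Proposition 2.3 together with the remark that the conventions differ by $z \mapsto -z^{-1}$. Your reduction of the condition $\square$ to $\dim(V \cap F_{n-d}(z)) \geq 1$, the Laplace expansion, and the Vandermonde evaluation of the flag minors are all correct, and they yield
$$\det N(z) \;=\; \sum_{I} (-1)^{\sum_{i\in I} i - \binom{d+1}{2}}\, p_I\, \Delta(I^c)\, z^{\,\sum_{i\in I^c} i - \binom{n-d+1}{2}},$$
with the exponent equal to $d(n-d)+d-\ell(I)$ and the sign equal to $(-1)^{\ell(I)-d}$, both of which do match the statement up to $I$-independent factors, as you say.

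The gap is the step you dismiss as bookkeeping: $\Delta(I^c)$ and $\Delta(I)$ do \emph{not} differ by an $I$-independent factor, so no amount of dividing through by global constants converts one into the other. Already for $d=1$, $n=3$ one has $\Delta(\{2\}^c)=\Delta(\{1,3\})=2$ while $\Delta(\{1\}^c)=\Delta(\{2,3\})=1$, even though $\Delta(\{1\})=\Delta(\{2\})=1$; concretely, your determinant correctly gives the condition $p_1 z^2 - 2p_2 z + p_3 = 0$ for $V \subset F_2(z)$, whereas the displayed formula with $\Delta(I)$ gives $p_1 z^2 - p_2 z + p_3 = 0$, a different hypersurface. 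So your computation actually proves the statement with $\Delta(I^c)$ in place of $\Delta(I)$; to reach the formula as printed you need the additional duality input (the apolarity isomorphism identifying the osculating flag with the flag of root multiplicities, which acts on Pl\"ucker coordinates by $p_I \mapsto \pm\, c_I\, p_{I^c}$) that is hidden in the paper's remark about Purbhoo's conventions. You should either supply that step explicitly or state and prove the $\Delta(I^c)$ version, which is what your argument delivers and which suffices for every later use of the proposition.
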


\begin{proof}
See \cite[Proposition 2.3]{Purb1}. Purbhoo's conventions are related to ours by the involution $z \mapsto - z^{-1}$ of $\PP^1$. 
\end{proof}

Finally, we recall some tools for working with Cohen-Macaulay varieties.

\begin{theorem}[{\cite[Lemma 1]{Brion}}] \label{proper intersection}
Let $Z$ be smooth and let $X$ and $Y$ be subvarieties of $Z$ which are Cohen-Macaulay and pure of codimension $s$ and $t$ respectively. If $Y \cap Z$ has codimension $s+t$, then $Y \cap Z$ is Cohen-Macaulay.
\end{theorem}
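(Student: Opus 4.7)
The plan is to reduce the statement to the standard fact that a regular sequence in a Cohen-Macaulay local ring which cuts out a quotient of the expected codimension is automatically regular in that ring, and therefore the quotient is again Cohen-Macaulay. The geometric device for this reduction is the diagonal embedding of the smooth ambient variety.

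First I would replace the intersection inside $Z$ by a slice of the product inside $Z \times Z$. Namely, identify
\[ X \cap Y \;\cong\; (X \times Y) \cap \Delta_Z \]
as closed subschemes of $Z \times Z$, where $\Delta_Z \subset Z \times Z$ is the diagonal. Because $X$ and $Y$ are Cohen-Macaulay and we are working over a field, $X \times Y$ is Cohen-Macaulay, and it is pure of codimension $s+t$ in $Z \times Z$.

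Next I would use the smoothness of $Z$ to observe that $\Delta_Z \hookrightarrow Z \times Z$ is a regular embedding of codimension $\dim Z$. Concretely, in a Zariski neighborhood of any point, $\Delta_Z$ is cut out by a regular sequence $f_1, \dots, f_{\dim Z}$ of length $\dim Z$ in the structure sheaf of $Z \times Z$. A dimension count shows that in $X \times Y$ the subscheme $(X \times Y) \cap \Delta_Z$ has codimension exactly $\dim Z$, since
\[ \dim(X \cap Y) = \dim Z - s - t \quad\text{and}\quad \dim(X \times Y) = 2 \dim Z - s - t. \]

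The final step is the commutative algebra input. Restrict the regular sequence $f_1, \dots, f_{\dim Z}$ to the Cohen-Macaulay local ring of $X \times Y$ at a point of the intersection. By hypothesis the quotient by these $\dim Z$ elements drops dimension by exactly $\dim Z$, which is the maximum possible. On a Cohen-Macaulay local ring, any system of elements realizing the maximal possible drop in dimension is automatically a regular sequence (this is the characterization of Cohen-Macaulayness via depth equalling codimension). Hence $f_1, \dots, f_{\dim Z}$ is a regular sequence on $\mathcal{O}_{X \times Y}$, and the quotient $\mathcal{O}_{X \cap Y}$ is therefore Cohen-Macaulay.

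The only step that requires any care is the last one: ensuring that transversality of codimensions forces the restricted sequence to remain regular. This is exactly where we use that $X \times Y$ is Cohen-Macaulay, as opposed to merely equidimensional — without that hypothesis the restricted sequence could fail to be regular even while cutting out the expected dimension. Everything else is bookkeeping: the diagonal trick converts the intersection problem into a regular-slicing problem, and Cohen-Macaulayness is preserved under regular slicing.
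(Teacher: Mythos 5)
Your proof is correct; the paper itself gives no argument but simply cites Brion's Lemma~1, whose proof is exactly this reduction to the diagonal $\Delta_Z \subset Z \times Z$ combined with the fact that in a Cohen--Macaulay local ring a sequence cutting the dimension down by its length is automatically regular. The only cosmetic point is that ``$Y\cap Z$ has codimension $s+t$'' should be read as every component of $X\cap Y$ having codimension $s+t$ (purity), which is what makes your local dimension count valid at each point of the intersection.
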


\begin{theorem}[{\cite[Theorem 23.1]{Mats}}] \label{miracle flatness}
Let $B$ be smooth, $X$ Cohen-Macaulay, and $\pi: X \to B$ surjective with all fibers of the same dimension. Then $\pi$ is flat and the fibers of $\pi$ are Cohen-Macaulay.
\end{theorem}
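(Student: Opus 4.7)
The plan is to reduce to a local statement in commutative algebra and apply the local criterion for flatness together with the interaction between regular sequences and dimension in Cohen-Macaulay rings. Fix a closed point $x \in X$ with image $b = \pi(x) \in B$. Set $A = \mathcal{O}_{B,b}$ and $R = \mathcal{O}_{X,x}$. Since $B$ is smooth, $A$ is a regular local ring; by hypothesis $R$ is Cohen-Macaulay. Let $d = \dim A$. The equidimensionality of the fibers, combined with surjectivity, forces the common fiber dimension to equal $\dim X - \dim B$ on each irreducible component meeting $x$, and so $\dim R/\mathfrak{m}_A R = \dim R - d$.

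The crux is the following claim: any regular system of parameters $t_1, \ldots, t_d$ for $A$ maps to a regular sequence in $R$. Indeed, their images cut $\dim R$ down by exactly $d$ (that is the content of the fiber-dimension hypothesis), and in a Cohen-Macaulay local ring a sequence of elements is a regular sequence as soon as it drops the dimension by its length. This is the step where the Cohen-Macaulay hypothesis on $X$ is really used, and it is the main point of the proof; the rest is largely formal.

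Once $(t_1, \ldots, t_d)$ is known to be $R$-regular, flatness follows by the local Tor criterion. The Koszul complex $K_\bullet(t_1, \ldots, t_d; A)$ is a free resolution of $A/\mathfrak{m}_A = k(b)$ because $A$ is regular, so $\mathrm{Tor}_i^A(R, k(b))$ is the homology of $K_\bullet(t_1, \ldots, t_d; R)$; this latter Koszul complex is acyclic in positive degrees precisely because $(t_1, \ldots, t_d)$ is $R$-regular. Thus $\mathrm{Tor}_1^A(R, k(b)) = 0$, and the local criterion for flatness (applied to the Noetherian local ring $A$ and the finitely generated $A$-module $R$) yields that $R$ is $A$-flat. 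Globalizing, $\pi$ is flat.

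Finally, the Cohen-Macaulayness of fibers comes essentially for free from the same setup. The fiber ring at $b$ localized at $x$ is $R/\mathfrak{m}_A R = R/(t_1, \ldots, t_d) R$, and the quotient of a Cohen-Macaulay local ring by a regular sequence is again Cohen-Macaulay. Since a scheme is Cohen-Macaulay exactly when all its local rings are, the fibers of $\pi$ are Cohen-Macaulay. The only genuine obstacle in this program is the dimension-to-regular-sequence passage in the second paragraph; everything else is bookkeeping and standard homological algebra.
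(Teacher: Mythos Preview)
The paper does not prove this statement; it is cited directly from \cite[Theorem~23.1]{Mats}, and your argument is essentially the proof found there --- use Cohen--Macaulayness to upgrade the dimension drop to the regularity of a system of parameters lifted from $A$, then deduce flatness from $\mathrm{Tor}_1^A(R,k(b))=0$ via the Koszul resolution of $k(b)$. One small correction: $R=\mathcal{O}_{X,x}$ is in general not a finitely generated $A$-\emph{module} (only a Noetherian local ring receiving a local homomorphism from $A$), but the local criterion for flatness holds in that generality, so your conclusion is unaffected.
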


We will often use these theorems together:
\begin{cor} \label{proper plus miracle}
Let $B$ be smooth, let $Z \to B$ be a smooth map, let $X$ and $Y$ be subvarieties of $Z$ which are Cohen-Macaulay, flat over $B$, and have codimensions $s$ and $t$ in $Z$.
If the fibers of $X \cap Y$ have codimension $s+t$, then these fibers are Cohen-Macaulay, and $X \cap Y$ is flat over $B$.
\end{cor}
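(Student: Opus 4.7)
The plan is to chain together Theorem~\ref{proper intersection} and Theorem~\ref{miracle flatness} in the obvious way, using the hypotheses on $B$ and the relative dimensions to feed the output of the first into the input of the second.

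First I would observe that $Z$ is smooth. This is automatic: $B$ is smooth and $Z \to B$ is smooth, so smoothness of the target lifts to the total space. Next, I would upgrade the fiberwise codimension hypothesis into a global one. Because $X$ is flat over $B$ with codimension $s$ in $Z$, each fiber $X_b$ has codimension $s$ in $Z_b$; similarly each $Y_b$ has codimension $t$ in $Z_b$. The hypothesis that $(X \cap Y)_b = X_b \cap Y_b$ has codimension $s+t$ in $Z_b$ then implies, after adding $\dim B$ to each fiber dimension, that $X \cap Y$ has codimension $s+t$ in $Z$. Now Theorem~\ref{proper intersection} applies and shows that $X \cap Y$ is Cohen-Macaulay.

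Finally I would apply Theorem~\ref{miracle flatness} to the map $\pi : X \cap Y \to B$. The base $B$ is smooth, $X \cap Y$ is Cohen-Macaulay by the previous step, and all fibers of $\pi$ have the common dimension $\dim Z_b - s - t$ by hypothesis, so $\pi$ is surjective with equidimensional fibers. Theorem~\ref{miracle flatness} then gives both flatness of $\pi$ and Cohen-Macaulayness of its fibers, completing the corollary.

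The only subtle point, and the step I would pay attention to, is the passage from fiberwise to global codimension and the implicit assumption that $X \cap Y$ surjects onto $B$. If $\pi$ is not surjective, one should restrict attention to the image $B' \subseteq B$; but $B'$ need not be smooth, so a direct application of Theorem~\ref{miracle flatness} would fail. In the applications in this paper, the families $X$ and $Y$ will intersect over all of $B$ (so $\pi$ is surjective), and this subtlety does not arise. Otherwise the proof is a clean two-step composition of the cited theorems with no calculation required.
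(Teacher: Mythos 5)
The paper gives no separate proof of this corollary, presenting it as the immediate combination of Theorem~\ref{proper intersection} and Theorem~\ref{miracle flatness}; your proof carries out exactly that intended chaining, including the correct passage from fiberwise to global codimension. Your remark about surjectivity is a fair reading of the hypotheses of Theorem~\ref{miracle flatness} and matches how the corollary is actually used in the paper.
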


\section{The families $\cG(d,n)$ and $\cS(\lambda_{\bullet})$} \label{sec families}

In this section, we construct the families $\cG(d,n)$ and $\cS(\lambda_{\bullet})$ over $\Mbar{r}$.  We fix $0 \leq d \leq n$. This construction will involve many varieties which are not seen again outside this section.

Let $T$ be a set with three elements. Let $\CC^2_T$ be the two-dimensional vector space of functions $f: T \to \CC$, modulo the subspace spanned by $(1,1,1)$. Let $\PP^1_T$ be the projective space of lines in $\CC^T$. Note that $\PP^1_T$ has three points $z(t)$ canonically labeled by $t \in T$, corresponding to the standard basis of $\CC^T$.
Let $V_T = \Sym^{n-1} \CC^2_T$ and let $G(d,n)_T$ be the Grassmannian of $d$-dimensional subspaces of $V_T$. 
We have the Segre embedding $\PP^1_T \hookrightarrow \PP(V_T)$, induced by sending $v \in \CC^2_T$ to $v^{\otimes (n-1)}$ in $V_T$. 
For $z \in \PP^1_T$, we can define a flag $F_{\bullet}(z)$ in $V_T$ as before, and we can then define Schubert varieties $\Omega(\lambda, z)_T \subseteq G(d,n)_T$ with respect to that flag. 
In particular, we write $\Omega(\lambda, t)_T$ for $\Omega(\lambda, z(t))_T$ for $t \in T$.

Let $Q=\{ p_1, p_2, q_1, q_2 \}$ be a set with four elements. Let $C$ be a genus zero curve with four distinct marked points $z(p_1)$, $z(p_2)$, $z(q_1)$, $z(q_2)$. Let $T_1 = (p_1, p_2, q_1)$ and $T_2 = (p_1, p_2, q_2)$. Then we have unique isomorphisms $\PP^1_{T_1} \cong C \cong \PP^1_{T_2}$, taking the labeled points of the $\PP^1_{T_i}$ to the labeled points of $C$. Let $\phi(C, z(p_1), z(p_2), z(q_1), z(q_2)) : \PP^1_{T_1} \to \PP^1_{T_2}$ be the composite isomorphism. 
This induces an isomorphism  $\phi(C, z(p_1), z(p_2), z(q_1), z(q_2))_{\ast} : G(d,n)_{T_1} \to G(d,n)_{T_2}$.
Let $\cX_0(p_1, p_2; q_1, q_2)$ be the closed subvariety of $\Mzero{Q} \times G(d,n)_{T_1} \times G(d,n)_{T_2}$ whose fiber over $(C, z(p_1), z(p_2), z(q_1), z(q_2))$ is the graph of $\phi(C, z(p_1), z(p_2), z(q_1), z(q_2))_{\ast}$. Let  $\cX(p_1, p_2; q_1, q_2)$ be the closure of  $\cX_0(p_1, p_2; q_1, q_2)$ in $\Mbar{Q} \times G(d,n)_{T_1} \times G(d,n)_{T_2}$.

For the next proposition and its proof, we abbreviate $\cX_0(p_1, p_2; q_1, q_2)$ and $\cX(p_1, p_2; q_1, q_2)$ to $\cX_0$ and $\cX$.

\begin{prop} \label{twocharts}
Over every point of $\Mzero{Q}$, the fiber of $\cX$ is isomorphic to $G(d,n)$.
Over the point of $\Mbar{Q}$ where $q_1$ and $q_2$ collide, the fiber is also isomorphic to $G(d,n)$.

Over the two points of $\Mbar{Q}$ where one of the $p$'s collides with one of the $q$'s, the fiber of $\cX$ is the reduced union of $\binom{n}{d}$ irreducible components, indexed by partitions $\nu \in \Lambda$. 
More specifically, when $p_1$ and $q_1$ collide, the fiber of $\cX$ is the reduced union $\bigcup_{\nu} \Omega(\nu, z(p_2)) \times  \Omega(\nu^{C}, z(p_1))$. 
When $p_1$ and $q_2$ collide, the fiber of $\cX$ is the reduced union $\bigcup_{\nu} \Omega(\nu, z(p_1)) \times  \Omega(\nu^{C}, z(p_2))$

The total space of $\cX$ is reduced and Cohen-Macaulay; $\cX \to \Mbar{Q}$ is flat.
\end{prop}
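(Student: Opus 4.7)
The plan hinges on two basic observations. First, $\cX$ is the closure of the reduced smooth irreducible scheme $\cX_0$ (the graph of a family of isomorphisms of Grassmannians), so $\cX$ itself is reduced and irreducible. Second, $\Mbar{Q}\cong\PP^1$ is a smooth curve, and a reduced integral scheme dominating a smooth curve is automatically flat over it (every associated point of $\cX$ maps to the generic point of the base). Given these, Cohen--Macaulayness of $\cX$ will follow from the standard fact that a flat family over a smooth base with Cohen--Macaulay fibers has Cohen--Macaulay total space. So the substantive content is the identification of the three possible closed fibers.

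Over $\Mzero{Q}$ the fibers are $G(d,n)$ by construction. At the boundary point where $q_1$ and $q_2$ collide, the stable curve is two $\PP^1$'s glued at a node, with $p_1,p_2$ on one component and $q_1,q_2$ on the other. Both $\PP^1_{T_i}\to C$ contract the $q$-bubble to the node and identify $\PP^1_{T_i}$ with the $p$-bubble in a manner matching the labels $p_1,p_2$. In the limit $\phi$ is the identity, and the fiber is the diagonal in $G(d,n)_{T_1}\times G(d,n)_{T_2}$, a reduced copy of $G(d,n)$.

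For the $p_1\leftrightarrow q_1$ boundary, I would choose a one-parameter family $C_\epsilon$ with $p_1=0$, $p_2=\infty$, $q_2=1$, $q_1=\epsilon$, and normalize both $\PP^1_{T_i}$ so their labeled points sit at $(0,\infty,1)$. Then $\phi\colon\PP^1_{T_1}\to\PP^1_{T_2}$ is the rescaling $w\mapsto\epsilon w$, inducing on $V_T=\Sym^{n-1}\CC^2$ the one-parameter subgroup $t_\epsilon=\diag(1,\epsilon,\ldots,\epsilon^{n-1})$ in the basis that diagonalizes the torus-fixed osculating flags at $z(p_1)$ and $z(p_2)$. In Plücker coordinates $(V_I,W_I)$, the graph relation $W_I=\lambda\,\epsilon^{\ell(I)}V_I$ (where $\ell(I)=\sum_{i\in I}(i-1)$) yields the bihomogeneous equations $\epsilon^{\ell(J)}W_IV_J=\epsilon^{\ell(I)}W_JV_I$; clearing denominators and setting $\epsilon=0$ gives the standard-monomial relations $V_IW_J=0$ for all pairs with $\ell(I)>\ell(J)$. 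The reduced vanishing locus of these equations in $G(d,n)_{T_1}\times G(d,n)_{T_2}$ is precisely the claimed union $\bigcup_\nu\Omega(\nu,z(p_2))\times\Omega(\nu^C,z(p_1))$, and a direct Gröbner / standard-monomial argument shows that the scheme-theoretic limit is already reduced and equal to this union. The $p_1\leftrightarrow q_2$ case is obtained from this one by swapping $p_1$ and $p_2$, yielding the companion union $\bigcup_\nu\Omega(\nu,z(p_1))\times\Omega(\nu^C,z(p_2))$.

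Each component $\Omega(\nu,\cdot)\times\Omega(\nu^C,\cdot)$ has codimension $|\nu|+|\nu^C|=d(n-d)$ in $G(d,n)\times G(d,n)$ and is Cohen--Macaulay as a product of Cohen--Macaulay Schubert varieties (Ramanathan); pairwise intersections of adjacent components are proper intersections of Schubert varieties with respect to distinct osculating flags, which by Proposition~\ref{Schub trans} have the expected codimension and hence, by Theorem~\ref{proper intersection}, are Cohen--Macaulay. A Mayer--Vietoris / shelling argument on the poset of components then gives Cohen--Macaulayness of each union fiber. Together with the diagonal fiber and the generic $G(d,n)$ fibers, every fiber of $\cX\to\Mbar{Q}$ is Cohen--Macaulay of pure dimension $d(n-d)$, and Cohen--Macaulayness of $\cX$ follows. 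The hardest step in this program is the scheme-theoretic identification of the $\epsilon=0$ limit with a reduced union of Schubert products; once that Gröbner-degeneration input is in hand, flatness, reducedness, and Cohen--Macaulayness of the total family fall out from general principles.
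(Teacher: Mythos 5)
Your overall skeleton is sound and your flatness argument (an integral scheme dominating the smooth curve $\Mbar{Q}\cong\PP^1$ is automatically flat over it) is a perfectly good, and in fact more elementary, route than the paper's. But the two steps you yourself flag as substantive are precisely where the proof lives, and neither is actually supplied. First, the identification of the scheme-theoretic limit at $\epsilon=0$ with the \emph{reduced} union $\bigcup_\nu\Omega(\nu,z(p_2))\times\Omega(\nu^C,z(p_1))$: setting $\epsilon=0$ in your cleared-denominator equations only produces equations that the flat limit \emph{satisfies}; showing these generate the full limit ideal and that this ideal is radical is exactly the degeneration-of-the-diagonal theorem. The paper does not reprove it; it quotes Brion's construction of a flat family $\mathcal{Y}$ over $\CC^{n-1}$ with the diagonal as general fiber and this union as special fiber, and realizes $\cX$ near the bad point as the pullback of $\mathcal{Y}$ along the curve $\tau\mapsto(\tau,\tau^2,\ldots,\tau^{n-1})$. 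Your ``direct Gr\"obner / standard-monomial argument'' is a citation-shaped hole, not an argument.

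Second, your route to Cohen--Macaulayness (CM fibers plus flatness implies CM total space) forces you to prove the special fibers are CM, and the argument offered does not do this. The pairwise intersections of components are \emph{not} intersections of Schubert varieties with respect to distinct osculating flags: in each factor both Schubert varieties are taken with respect to the \emph{same} flag ($z(p_2)$ in one factor, $z(p_1)$ in the other), so Proposition~\ref{Schub trans} is not the relevant tool, and in any case pairwise CM intersections do not imply that a union is CM --- one needs an ordering of the components in which each new piece meets the union of its predecessors in a CM scheme of dimension exactly one less, and no such shelling is exhibited. The paper avoids this entirely by running the implication in the opposite direction: Brion's family $\mathcal{Y}$ is CM, the curve $\gamma\subset\CC^{n-1}$ is a complete intersection, flatness of $\mathcal{Y}$ over $\CC^{n-1}$ makes the defining equations of $\gamma$ a regular sequence on $\mathcal{Y}$, so the restriction (i.e.\ $\cX$ near the bad fiber) is CM; CM fibers then come for free from miracle flatness. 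If you want a self-contained proof along your lines, you must either import Brion's theorem as the paper does or genuinely carry out the standard-monomial degeneration and the shelling of the components.
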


\begin{proof}
Over $\Mzero{Q}$, the fibers of $\cX$ are graphs of maps $G(d,n)_{T_1} \to G(d,n)_{T_2}$, so they are isomorphic to $G(d,n)$.
When $q_1$ collides with $q_2$, the isomorphism $\phi$ becomes the ``identity map'', taking the labeled points of $\PP^1_{T_1}$ to the labeled points of $\PP^1_{T_2}$, so the fiber of $\cX$ is again the graph of a map. Thus, all of the interest is near the two remaining boundary points of $\Mbar{Q}$. 

We switch to explicit coordinates. Let $z(p_1)$, $z(p_2)$ and $z(q_1)$ be at $0$, $\infty$ and $1$, and let $\tau$ be the position of $z(q_2)$. We will study what happens as $\tau \to 0$; the case where $\tau \to \infty$ is similar. 
The map $\PP^1_{T_1} \to \PP^1_{T_2}$ is given by $u \mapsto \tau u$.
%Let $\Delta^i_I$ be the Pl\"ucker coordinates on $G(d,n)_{T_i}$, indexed by $I \in \binom{[n]}{d}$. 
%For $\tau \neq 0$, we have $\Delta^2_I = \tau^{\sum_{i \in I} i} \Delta^1_I$. 

Brion \cite[Section 2]{Brion} constructs a subvariety $\mathcal{Y}$ of $\CC^{n-1} \times G(d,n) \times G(d,n)$, flat over $\CC^{n-1}$, whose fiber over $(1,1,\ldots, 1)$ is the diagonal of $G(d,n) \times G(d,n)$ and whose fiber over $(0,0,\ldots, 0)$ is the union of Schubert varieties described above. 
Our family is precisely the restriction of Brion's family to the curve $\gamma \subset \CC^{n-1}$ parametrized by $(\tau,\tau^2, \cdots, \tau^{n-1})$. Since $\mathcal{Y}$ is flat, so is ours, and it has the described fibers.
The curve $\gamma$ is a complete intersection in $\CC^{n-1}$, cut out by the equations $x_k = x_1^k$, for $2 \leq k \leq n-1$. 
Since $\mathcal{Y}$ is flat over $\CC^{n-1}$, the regular sequence $(x_2-x_1^2, x_3-x_1^3, \ldots, x_{n-1}-x_1^{n-1})$ on $\CC^{n-1}$ pulls back to a regular sequence on $\mathcal{Y}$. So the quotient of a Cohen-Macaulay variety by a regular sequence is Cohen-Macaulay, so the restriction of$\mathcal{Y}$ to $\gamma$ is Cohen-Macaulay again.
\end{proof}

We will frequently engage in the following abuse of notation: With the above notation, let $Q \subset [r]$, and let $T_3$, $T_4$, \dots, $T_m$ be three element subsets of $[r]$.
Then we have a natural projection $\Mbar{r} \times \prod_{i=1}^{m} G(d,n)_{T_i} \to \Mbar{Q} \times G(d,n)_{T_1} \times G(d,n)_{T_2}$. 
We will use the notation $\cX(p_1, p_2; q_1, q_2)$ to also refer to the preimage of $\cX(p_1, p_2; q_1, q_2)$ inside  $\Mbar{r} \times \prod_{i=1}^{m} G(d,n)_{T_i}$. 
When necessary for clarity, we will say that we are considering $\cX(p_1, p_2; q_1, q_2)$ inside  $\Mbar{r} \times \prod_{i=1}^{m} G(d,n)_{T_i}$. 
Similarly, for $T \in \binom{[r]}{3}$ and $t \in T$, we will write $\Omega(\lambda, t)_T$ for the preimage of $\Omega(\lambda, t)_T$ under the map $\Mbar{r} \times \prod_{i=1}^{m} G(d,n)_{T_i} \to G(d,n)_T$.

We can now define the families $\cG(d,n)$ and $\cS(\lambda_{\bullet})$.
They are both defined as closed subfamilies of $\Mbar{r} \times \prod_{T \in \binom{[n]}{3}} G(d,n)_T$.
Namely, $\cG(d,n) = \bigcap_{p_1, p_2, q_1, q_2 \in [r]} \cX(p_1, p_2; q_1, q_2)$ and $\cS(\lambda_{\bullet}) = \cG(d,n) \cap \bigcap_{t \in T \in \binom{[r]}{3}} \Omega(\lambda_t, t)_{T}$. 
We recall the promises we have made about $\cG(d,n)$:
\begin{theorem}\label{thm Grass family}
The family $\cG(d,n)$ defined above is Cohen-Macaulay and flat of relative dimension $d(n-d)$ over $\Mbar{r}$. The fibers of $\cG(d,n)$ over $\Mzero{r}$ are all isomorphic to $G(d,n)$.
\end{theorem}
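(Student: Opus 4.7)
The plan is to realize $\cG(d,n)$ as the intersection of a well-chosen subcollection of the $\cX$'s (corresponding to a spanning tree), build flatness and Cohen-Macaulayness one $\cX$ at a time using Corollary~\ref{proper plus miracle}, and then verify that the remaining $\cX$'s impose no additional scheme-theoretic constraints. Set $\mathcal{Z} = \Mbar{r} \times \prod_{T \in \binom{[r]}{3}} G(d,n)_T$, which is smooth over $\Mbar{r}$. Put a graph structure on $\binom{[r]}{3}$ by joining 3-subsets sharing exactly two elements; this graph is connected, so fix a spanning tree $\mathcal{T}$ with $\binom{r}{3}-1$ edges. Each edge corresponds to some $\cX(p_1,p_2;q_1,q_2)$, a flat Cohen-Macaulay subvariety of $\mathcal{Z}$ of codimension $d(n-d)$ over $\Mbar{r}$ by Proposition~\ref{twocharts}. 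Let $\cG'$ denote the intersection inside $\mathcal{Z}$ of these $\binom{r}{3}-1$ pullbacks.

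The core step is a fiber-by-fiber dimension check over $\Mbar{r}$. Processing the edges of $\mathcal{T}$ in an order rooted at some $T_0$, each new $\cX$ links a previously-used $G(d,n)_{T_1}$ to a fresh $G(d,n)_{T_2}$; over $\Mzero{r}$, where $\cX$ is the graph of a Grassmannian isomorphism, the new constraint cuts the fiber dimension by exactly $d(n-d)$. Over a boundary point of $\Mbar{r}$ one instead pulls back the explicit union-of-Schubert-varieties description of Proposition~\ref{twocharts} through the appropriate forgetful map $\Mbar{r} \to \Mbar{Q}$; on each irreducible piece of each fiber the same dimension count applies. Having verified that every fiber of $\cG'$ has dimension exactly $d(n-d)$, iterated application of Corollary~\ref{proper plus miracle} shows $\cG'$ is Cohen-Macaulay and flat over $\Mbar{r}$ of relative dimension $d(n-d)$, with fibers over $\Mzero{r}$ isomorphic to $G(d,n)$.

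To conclude $\cG(d,n) = \cG'$ as subschemes of $\mathcal{Z}$: the inclusion $\cG(d,n) \subseteq \cG'$ is trivial. For the reverse, over $\Mzero{r}$ the spanning tree constraints already identify all $G(d,n)_T$'s consistently, so $\cG'|_{\Mzero{r}} \subseteq \cX|_{\Mzero{r}}$ for every other 4-tuple. Flatness of $\cG'$ over the irreducible base $\Mbar{r}$ makes $\cG'|_{\Mzero{r}}$ dense in $\cG'$, and $\cG'$ is generically reduced (even smooth there) and Cohen-Macaulay, hence reduced throughout. A regular function on a reduced scheme vanishing on a dense open subset is zero, so the equations cutting out any other $\cX$ vanish scheme-theoretically on $\cG'$, giving $\cG' = \cG(d,n)$.

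The main obstacle is the fiber-dimension bookkeeping at the boundary of $\Mbar{r}$: over a stable curve with $v$ components, each $\cX$ fiber breaks into many partition-indexed pieces by Proposition~\ref{twocharts}, and one must verify that intersecting along the spanning tree yields components all of dimension $d(n-d)$ for every combinatorial choice of partitions at the nodes. This amounts to matching the spanning tree of $\binom{[r]}{3}$ against the dual graph of the stable curve and checking that each complementary pair $(\nu, \nu^C)$ of Schubert conditions at a node contributes exactly $d(n-d)$ of codimension, leaving the total fiber dimension unchanged.
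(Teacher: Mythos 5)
Your overall architecture (intersect a subcollection of the $\cX$'s chosen so that each one cuts codimension exactly $d(n-d)$, apply Corollary~\ref{proper plus miracle}, then use reducedness and density of $\Mzero{r}$ to absorb the remaining $\cX$'s) is the right shape, and your last step matches the paper's. But the specific choice of subcollection --- a single global spanning tree of the graph on $\binom{[r]}{3}$ --- cannot work, and the step you yourself flag as ``the main obstacle'' is not a bookkeeping exercise: it is false for every choice of spanning tree once $r\geq 4$.

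Here is the concrete failure. Over a boundary point whose stable curve has dual tree $\Delta$, each triple $T$ determines a component $v(T)\in\Delta^{\circ}$, and $\cX(p_1,p_2;q_1,q_2)$ is a graph of an isomorphism when $v(T_1)=v(T_2)$ but is $\bigcup_{\nu}\Omega(\nu,\cdot)\times\Omega(\nu^{C},\cdot)$ when $v(T_1)\neq v(T_2)$. For the dimension count to come out to $d(n-d)$ on every component of the fiber, your spanning tree must, after contracting the classes $v^{-1}(w)$, become the internal tree of $\Delta$: each class must be connected inside the tree, and each pair of adjacent components must be joined by \emph{exactly one} crossing edge. If two crossing edges join the same pair of classes, the two Schubert conditions they impose are taken with respect to the \emph{same} osculating flag at the node, so they fail to intersect properly: taking $\nu_1=\nu_2=\nu$ produces a component of dimension $2d(n-d)-|\nu|>d(n-d)$. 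Already for $r=4$ this is unavoidable. The graph on $\binom{[4]}{3}$ is $K_4$ and its $6$ edges split into $3$ pairs, one pair of ``within-class'' edges for each of the three boundary points $12|34$, $13|24$, $14|23$ of $\Mbar{4}$. A working tree would need $2$ within-class edges for each boundary point, i.e.\ $6$ edges, but a spanning tree has only $3$. So, for instance, the path $123-124-134-234$ works at $12|34$ but has all three edges crossing at $13|24$, where the fiber of your $\cG'$ acquires components of dimension up to $2d(n-d)$; in particular $\cG'\supsetneq\cG(d,n)$ and your iterated application of Corollary~\ref{proper plus miracle} breaks at the first improper intersection.

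This is exactly why the paper does not work globally: it covers $\Mbar{r}$ by the opens $U(\Gamma)$ indexed by trivalent trees, uses Proposition~\ref{prop vertex reduction} to collapse the $\binom{r}{3}$ Grassmannians to $r-2$ of them (one per internal vertex of $\Gamma$), and intersects only the $r-3$ constraints $\cX(e)$ attached to internal edges of $\Gamma$. Over $U(\Gamma)$ the only degenerate dual trees $\Delta$ that occur are contractions of $\Gamma$, and for those the edge set of $\Gamma$ is automatically adapted in the sense above, so each $\cX(e)$ cuts properly. If you want to salvage your argument, you must make the choice of constraints local on the base in this way; no base-independent choice exists.
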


The remainder of this section consists of the proving that $\cG(d,n)$ and $\cS(\lambda_{\bullet})$ obey Theorems~\ref{thm Grass family}, \ref{thm Schub family} and \ref{thm real family}.
We begin by checking that the fibers over $\Mzero{r}$ are as claimed.

\begin{prop}
Over $\Mzero{r}$, the fibers of $\cG(d,n)$ are all isomorphic to $G(d,n)$, embedded in $\prod G(d,n)_T$ with the cohomology class of the small diagonal. 
The fiber of $\cS(\lambda_{\bullet})$ over $(z_1, z_2, \ldots, z_r) \in \Mzero{r}$ is isomorphic to $\Omega(\lambda_{\bullet}, z_{\bullet})$, inside the $G(d,n)$ of the previous sentence.
\end{prop}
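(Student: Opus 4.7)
The plan is a direct compatibility check across the various factors $G(d,n)_T$ appearing in the construction. Fix $(C, z_\bullet) \in \Mzero{r}$ and a reference identification $C \cong \PP^1$. For each $T = \{i,j,k\} \in \binom{[r]}{3}$, there is a unique isomorphism $\iota_T : \PP^1_T \to C$ sending the three canonical labeled points of $\PP^1_T$ to $z_i, z_j, z_k$; composing with the reference identification, each $\iota_T$ induces a pushforward $(\iota_T)_{\ast} : G(d,n)_T \to G(d,n)$.

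Next I would unwind the definition of each factor $\cX(p_1,p_2;q_1,q_2)$ in the intersection defining $\cG(d,n)$. The map $\phi(C, z(p_1), z(p_2), z(q_1), z(q_2))$ appearing in that definition is precisely the composite $\iota_{T_2}^{-1} \circ \iota_{T_1}$, so Proposition~\ref{twocharts} shows that the fiber of $\cX(p_1,p_2;q_1,q_2)$ over $(C, z_\bullet) \in \Mzero{Q}$ consists of pairs $(V_{T_1}, V_{T_2})$ with $(\iota_{T_1})_{\ast} V_{T_1} = (\iota_{T_2})_{\ast} V_{T_2}$. Intersecting over all four-element subsets $Q \subseteq [r]$, the fiber of $\cG(d,n)$ becomes the set of tuples $(V_T)_T$ for which $(\iota_T)_{\ast} V_T$ is independent of $T$. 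For $r \geq 4$, any two three-element subsets of $[r]$ can be connected by a chain of three-element subsets each sharing two elements with the next, so this common value determines the entire tuple; for $r=3$ there is a single factor and the statement is trivial. Either way, the fiber is the graph of the small diagonal under the $(\iota_T)_{\ast}$, hence is isomorphic to $G(d,n)$ and carries the small-diagonal cohomology class.

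For $\cS(\lambda_{\bullet})$ I would then observe that each $(\iota_T)_{\ast}$ intertwines the Segre embeddings $\PP^1_T \hookrightarrow \PP(V_T)$ and $\PP^1 \hookrightarrow \PP(\Sym^{n-1}\CC^2)$, so it carries the flag $F_{\bullet}(z(t))$ in $V_T$ to the flag $F_{\bullet}(z_t)$ in $\CC^n$, and therefore each Schubert variety $\Omega(\lambda_t, t)_T$ to the Schubert variety $\Omega(\lambda_t, z_t) \subseteq G(d,n)$. Under the identification of the fiber of $\cG(d,n)$ with the diagonal $G(d,n)$, the collective conditions $\Omega(\lambda_t, t)_T$ for all $T \ni t$ reduce, for each $i$, to the single condition $V \in \Omega(\lambda_i, z_i)$. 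Hence the fiber of $\cS(\lambda_{\bullet})$ is $\bigcap_{i=1}^r \Omega(\lambda_i, z_i) = \Omega(\lambda_{\bullet}, z_{\bullet})$, as claimed.

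The whole argument is essentially bookkeeping and I do not expect a real obstacle. The one subtlety worth noting is the reparametrization-invariance of $F_{\bullet}$, emphasized in the introduction: this is what guarantees that $(\iota_T)_{\ast}$ takes $F_{\bullet}(z(t))$ to $F_{\bullet}(z_t)$, and it is verified by unwinding the $\PGL_2$-equivariance of the Segre-plus-derivative construction of the osculating flag.
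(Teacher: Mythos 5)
Your proposal is correct and follows the same route as the paper's (much terser) proof: the fibers of the $\cX(p_1,p_2;q_1,q_2)$ are graphs of mutually compatible isomorphisms among the $G(d,n)_T$, so the fiber of $\cG(d,n)$ is the common graph (the small diagonal) and imposing the $\Omega(\lambda_t,t)_T$ cuts out exactly $\Omega(\lambda_\bullet,z_\bullet)$ inside it. Your added details — the chain argument connecting triples sharing two elements, and the check that $(\iota_T)_{\ast}$ carries osculating flags to osculating flags — are exactly the bookkeeping the paper leaves implicit.
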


\begin{proof}
Fix a point $(z_1, \ldots, z_r)$ to consider the fibers over. 
All of the varieties $\cX(p_1, p_2; q_1, q_2)$ are graphs of isomorphisms between two of the $G(d,n)_{T}$'s, and all these isomorphisms are compatible. So the fiber of $\cG(d,n)$ over $(z_{\bullet})$ are those points which are identified by all of these compatible isomorphisms, and is thus isomorphic to $G(d,n)$.
Imposing the Schubert conditions in addition imposes those Schubert conditions within this $G(d,n)$.
\end{proof}

All the claims in Theorems~\ref{thm Grass family}, \ref{thm Schub family} and \ref{thm real family}
are local on $\Mbar{r}$, so it is enough to prove them restricted to an open cover of $\Mbar{r}$.
We will now introduce terminology to describe the open cover.

Fix a trivalent tree $\Gamma$ with leaves labeled by $[r]$. We use $\Gamma$ to define an open subset $U(\Gamma)$ of $\Mbar{r}$. 
For a stable curve $(C, z_{\bullet})$, let $\Delta$ be the tree with leaves labeled by $[r]$ which encodes the structure of $C$. 
We define the point $(C, z_{\bullet})$ to be in $U(\Gamma)$ if there is a continuous map $\phi: \Gamma \to \Delta$ which takes leaves to leaves, preserving labels, takes internal vertices to internal vertices and takes edges either to edges or to internal vertices.
We note that the map $\phi$ is unique up to reparametrizing edges.

As $\Gamma$ ranges over trivalent trees with leaves labeled by $[r]$, the open sets $U(\Gamma)$ cover $\Mbar{r}$. 
Let $\cG(d,n)(\Gamma)$ and $\cS(\lambda_{\bullet})(\Gamma)$ be the open subsets of the corresponding families living over $U(\Gamma)$.
We write $\Gamma^{\circ}$ for the set of internal vertices of $\Gamma$.

For any $T \in \binom{[r]}{3}$, let $v(T)$ be the unique vertex of $\Gamma$ such that the elements of $T$ label leaves in three different components of $\Gamma \setminus v(T)$. 

For an internal vertex $v$ of $\Gamma$, compute the intersection inside $U(\Gamma) \times \prod_{T: v(T) = v} G(d,n)_T$ of those $\cX(p_1, p_2; q_1, q_2)$'s for which $v=v(p_1, p_2, q_1) = v(p_1, p_2, q_2) $. Call this intersection $G(d,n)_v$.

\begin{prop} \label{prop vertex reduction}
For every $T$ with $v(T)=v$, the projection $G(d,v)_v \to G(d,n)_T \times U(\Gamma)$ is an isomorphism.
\end{prop}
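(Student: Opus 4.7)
The plan is to show that, restricted to $U(\Gamma)$, every defining variety $\cX(p_1, p_2; q_1, q_2)$ of $G(d,n)_v$ is literally the graph of an isomorphism between two of the $G(d,n)_T$ factors, and then to assemble these into a coherent system of isomorphisms that realizes $G(d,n)_v$ as their simultaneous graph. Fix two 3-subsets $T_1, T_2 \subseteq [r]$ with $v(T_1) = v(T_2) = v$ and $|T_1 \cap T_2| = 2$; write $T_1 \cap T_2 = \{p_1, p_2\}$, $T_1 \setminus T_2 = \{q_1\}$, $T_2 \setminus T_1 = \{q_2\}$, and $Q = \{p_1, p_2, q_1, q_2\}$. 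The hypothesis that $v(p_1, p_2, q_i) = v$ for $i = 1, 2$ forces $p_1$ and $p_2$ to label leaves in two distinct components of $\Gamma \setminus v$, with both $q_1$ and $q_2$ labelling leaves in the remaining third component. Contracting $\Gamma$ to the subtree spanned by $Q$ and stabilizing therefore produces the unique $4$-leaf tree separating $\{p_1, p_2\}$ from $\{q_1, q_2\}$, which under $\Mbar{r} \to \Mbar{Q}$ corresponds to the boundary point where $q_1$ and $q_2$ collide. Since the dual graph of any point of $U(\Gamma)$ is a contraction of $\Gamma$, the induced $4$-pointed tree is either smooth or is this same boundary point, so the image of $U(\Gamma) \to \Mbar{Q}$ lies in $\Mzero{Q} \cup \{q_1 \sim q_2\}$, avoiding both boundary points where a $p$ collides with a $q$.

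By Proposition~\ref{twocharts}, over exactly this open subset of $\Mbar{Q}$ the family $\cX(p_1, p_2; q_1, q_2)$ is the graph of an isomorphism $G(d,n)_{T_1} \to G(d,n)_{T_2}$, so its pullback to $U(\Gamma)$ defines an isomorphism $\phi_{T_1, T_2}\colon U(\Gamma) \times G(d,n)_{T_1} \xrightarrow{\sim} U(\Gamma) \times G(d,n)_{T_2}$. Any two 3-subsets $T, T' \subseteq [r]$ with $v(T) = v(T') = v$ are connected by a chain of such single-element swaps — each simply replacing one leaf by another leaf in the same component of $\Gamma \setminus v$ — so composing the individual $\phi$'s produces a candidate isomorphism $\phi_{T, T'}$ for every pair, and $G(d,n)_v$ is cut out by the graphs of all the $\phi_{T_1, T_2}$ across adjacent pairs.

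To conclude, I would check that this system of isomorphisms is \emph{consistent}, i.e., that $\phi_{T, T'}$ is independent of the chosen chain. This can be verified on the dense open subset $\Mzero{r} \cap U(\Gamma)$: over a smooth curve $C$, each three-point identification $C \cong \PP^1_T$ arises from the same $C$, so the induced isomorphisms between the various $G(d,n)_T$'s all factor through the single $G(d,n)$ associated to $C$ and therefore commute around any loop; separatedness of the $G(d,n)_T$ factors extends this consistency to all of $U(\Gamma)$. Hence $G(d,n)_v$ is the simultaneous graph of the consistent system $\{\phi_{T, T'}\}$, and projection onto any single factor $G(d,n)_T \times U(\Gamma)$ is an isomorphism. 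The main obstacle is the first step — the combinatorial verification that $U(\Gamma)$ lands in the good locus of $\Mbar{Q}$ where $\cX(p_1, p_2; q_1, q_2)$ is a graph; once this is in hand, everything else is a formal consequence of Proposition~\ref{twocharts} together with a standard density argument on $\Mzero{r}$.
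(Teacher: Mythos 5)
Your proof is correct and takes essentially the same approach as the paper's one-sentence argument (each $\cX(p_1,p_2;q_1,q_2)$ defining $G(d,n)_v$ is the graph of an isomorphism, and these isomorphisms are compatible). You have simply filled in the details the paper leaves implicit, notably the combinatorial check that $U(\Gamma)$ maps into the locus of $\Mbar{Q}$ where $q_1$ and $q_2$ lie in the same component of $\Gamma\setminus v$, so that only the good boundary point of Proposition~\ref{twocharts} can occur.
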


\begin{proof}
$G(d,n)_v$ is the intersection of many varieties of the form $\cX(p_1, p_2; q_1, q_2)$, each of which is the graph of an isomorphism between one $G(d,n)_T$ and another $G(d,n)_{T'}$, and all these isomorphisms are compatible.
\end{proof}

We will want to make computations in fibers of families over $U(\Gamma)$.
We now introduce suitable notation for this purpose.
Fix $u \in U(\Gamma)$. 
For $Z \to U(\Gamma)$, let $Z(u)$ be the fiber over $u$.
We also adopt the notation that, if $Z_1$, \dots, $Z_k$ are families over $U(\Gamma)$, then $\prod_{i=1}^k Z_i$ denotes the product over $U(\Gamma)$. For example,
$\prod_{i=1}^3 Z_i$ means $Z_1 \times_{U(\Gamma)} Z_2 \times_{U(\Gamma)} Z_3$.

Let $C$ be the corresponding stable curve and $\Delta$ the dual tree. Let $\Delta^{\circ}$ be the internal vertices of $\Delta$, corresponding to the irreducible components of $C$. 
For $v \in \Delta^{\circ}$, let $C(v)$ be the corresponding component of $C$.
Let $\CC^2(v)$ be a two-dimensional vector space with a chosen identification $\PP(\CC^2(v)) \cong C(v)$. 
Let $V(v) = \Sym^{n-1} \CC^2(v)$ and let $G(d,n)(v)$ be the Grassmannian of $d$-planes in $V(v)$. 

For an edge $e$ of $\Delta$, with endpoint $v \in \Delta^{\circ}$, let $p(v,e)$ be the marked point of $C(v)$ corresponding to $e$. 
Let $\Omega(\lambda, v,e)(u)$ be the $\lambda$-Schubert variety in $G(d,n)_v(u)$ corresponding the flag at $p(v,e)$.
Continuing our abuses of notation, we will also write $\Omega(\lambda,v,e)(u)$ for the preimage of this subvariety in any product  $\prod_i G(d,n)_{v_i}(u)$ where $v_1=v$.

We restate Proposition~\ref{twocharts} in the language we have developed.
\begin{prop} \label{two charts restated}
Let $p_1$, $p_2$, $q_1$ and $q_2 \in [r]$ with $v_i = v(p_1, p_2, q_i)$.

If $\phi(v_1) = \phi(v_2)$ then $\cX(p_1, p_2; q_1, q_2)(u)$ is the graph of an isomorphism between $\cG(d,n)_{v_1}(u)$ and $\cG(d,n)_{v_2}(u)$.

If $\phi(v_1) \neq \phi(v_2)$, let $e_i$ be the edge incident to $v_i$ which separates $v_i$ from $v_{3-i}$.
Then $\cX(p_1, p_2; q_1, q_2)(u) = \bigcup_{\nu \in \Lambda} \Omega(\nu, v_1, e_1)(u) \times \Omega(\nu', v_2, e_2)(u)$.
\end{prop}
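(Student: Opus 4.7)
The plan is to translate Proposition~\ref{twocharts} into the $\Gamma$-language, via the forgetful map $\pi_Q : \Mbar{r} \to \Mbar{Q}$ (where $Q = \{p_1,p_2,q_1,q_2\}$) together with the isomorphisms supplied by Proposition~\ref{prop vertex reduction}. Since $\phi : \Gamma \to \Delta$ sends the $\Gamma$-median of any three leaves to their $\Delta$-median, $\phi(v_i)$ is precisely the $\Delta$-median of $\{p_1,p_2,q_{3-i}\}$. The four-point-tree lemma says the four triple-medians of $Q$ in $\Delta$ take at most two distinct values; the two coming from triples containing $\{p_1,p_2\}$, namely $\phi(v_1)$ and $\phi(v_2)$, agree exactly when $\pi_Q(u)$ lies in $\Mzero{Q}$ or at the boundary point where $q_1$ and $q_2$ collide. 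These are precisely the two points of $\Mbar{Q}$ where Proposition~\ref{twocharts} describes $\cX$ as the graph of an isomorphism $G(d,n)_{T_1} \to G(d,n)_{T_2}$; conversely $\phi(v_1) \neq \phi(v_2)$ forces $\pi_Q(u)$ onto one of the two ``$p$-$q$ collision'' boundary points.

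In Case~1 I would combine the graph description from Proposition~\ref{twocharts} with the isomorphisms $\cG(d,n)_{v_i}(u) \cong G(d,n)_{T_i}(u)$ of Proposition~\ref{prop vertex reduction} (valid because $v(T_i) = v_i$) to obtain the desired graph of an isomorphism $\cG(d,n)_{v_1}(u) \cong \cG(d,n)_{v_2}(u)$. For Case~2, the path from $v_1$ to $v_2$ in $\Gamma$ sits on the path from $p_1$ to $p_2$, so it leaves $v_1$ along either the $p_1$- or the $p_2$-direction; this direction is by definition $e_1$, and the two subcases correspond to the pairings $\{p_1,q_2\}\sqcup\{p_2,q_1\}$ and $\{p_1,q_1\}\sqcup\{p_2,q_2\}$ of the four $Q$-marks. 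Proposition~\ref{twocharts} applied at the corresponding boundary point then gives the fiber as $\bigcup_\nu \Omega(\nu, z(P_1)) \times \Omega(\nu^C, z(P_2))$ inside $G(d,n)_{T_1}(u) \times G(d,n)_{T_2}(u)$, where $P_j \in \{p_1, p_2\}$ is whichever leaf lies along $e_j$.

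It remains to show that, under the isomorphism $G(d,n)_{T_1}(u) \cong \cG(d,n)_{v_1}(u)$ of Proposition~\ref{prop vertex reduction}, $\Omega(\nu, z(P_1))$ corresponds to $\Omega(\nu, v_1, e_1)(u)$, and symmetrically for $e_2$. Over $\Mzero{r} \cap U(\Gamma)$ this is immediate: there the isomorphism is induced by the labeled identification $\PP^1_{T_1} \cong C \cong \PP^1_{v_1}$, which sends $z(P_1)$ to the marked point $z_{P_1}$ on $C$ and hence carries osculating flags to osculating flags. To globalize, I would use that both subvarieties of $\cG(d,n)_{v_1}$ are flat over $U(\Gamma)$ and agree on the dense open $\Mzero{r}$. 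The main technical obstacle is verifying this flatness and tracking the boundary limit: as $u$ approaches a boundary point, the section $z_{P_1}$ of the universal curve viewed on the component $C(\phi(v_1))$ specializes to the node $p(v_1,e_1)$ (because the leaf $P_1$ sits across that node), and the limit of osculating flags there is exactly the osculating flag at $p(v_1,e_1)$. This specialization is essentially the content of Brion's explicit degeneration used in the proof of Proposition~\ref{twocharts}, which guarantees the two flat families agree on the nose.
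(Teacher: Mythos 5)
Your argument is correct and matches the paper's (implicit) approach: the paper offers no proof of Proposition~\ref{two charts restated} at all, presenting it as a direct translation of Proposition~\ref{twocharts} through the forgetful map $\Mbar{r}\to\Mbar{Q}$ and the identifications of Proposition~\ref{prop vertex reduction}, which is precisely the translation you carry out (the dichotomy on $\phi(v_1)$ versus $\phi(v_2)$ correctly sorts $u$ into the graph case versus the two $p$--$q$ collision points, and the matching of $\Omega(\nu,z(P_1))$ with $\Omega(\nu,v_1,e_1)(u)$ is really just unwinding how the flag at a node is defined). Two small slips worth fixing: $\phi(v_i)$ is the $\Delta$-median of $\{p_1,p_2,q_i\}$, not of $\{p_1,p_2,q_{3-i}\}$; and in your last paragraph the family $u\mapsto\Omega(\nu,v_1,e_1)(u)$ is not defined over $\Mzero{r}$ (there is no node there), so the ``two flat families agreeing on a dense open'' framing does not quite parse --- but your subsequent observation that the marked point $P_1$ specializes under $T_1$-stabilization to the node $p(v_1,e_1)$, carrying osculating flag to osculating flag, is the correct and sufficient justification.
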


We have the following corollaries:

\begin{cor} \label{prop edge reduction}
Let $(p_1, p_2, q_1, q_2)$ and $(p'_1, p'_2, q'_1, q'_2)$ be two quadruples of distinct elements of $[r]$, such that $v(p_1, p_2, q_1) = v(p'_1, p'_2, q'_1)$ and $v(p_1, p_2, q_2) = v(p'_1, p'_2, q'_2)$. Define these two vertices to be $v_1$ and $v_2$. Then $\cX(p_1, p_2; q_1, q_2)$ and $\cX(p'_1, p'_2; q'_1, q'_2)$ meet $G(d,n)_{v_1} \times_{U(\Gamma)} G(d,n)_{v_2}$ in the same subvariety.

Also, $\cX(p_1, p_2; q_1, q_2)$ and $\cX(p_1, p_2; q_2, q_1)$ meet $G(d,n)_{v_1} \times_{U(\Gamma)} G(d,n)_{v_2}$ in the same subvariety.
\end{cor}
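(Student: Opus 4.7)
The strategy is to verify both equalities fiberwise over $U(\Gamma)$, and then promote to an equality of closed subvarieties using flatness and reducedness. The key input is Proposition~\ref{two charts restated}: each fiber of $\cX(p_1,p_2;q_1,q_2)$ inside $G(d,n)_{v_1} \times_{U(\Gamma)} G(d,n)_{v_2}$ depends only on the pair $(v_1,v_2)$ and on the position of these vertices in the dual tree of the fiber, not on the particular quadruple used to cut out $\cX$.

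For the first assertion, I would use Proposition~\ref{prop vertex reduction} to regard both $\cX(p_1,p_2;q_1,q_2)$ and $\cX(p'_1,p'_2;q'_1,q'_2)$ as closed subvarieties of the common ambient family $G(d,n)_{v_1} \times_{U(\Gamma)} G(d,n)_{v_2}$. Fix $u \in U(\Gamma)$, let $\Delta$ be the dual tree of the corresponding stable curve, and let $\phi \colon \Gamma \to \Delta$ be the collapsing map. If $\phi(v_1)=\phi(v_2)$, Proposition~\ref{two charts restated} identifies both fibers with the graph of the canonical isomorphism $G(d,n)_{v_1}(u) \to G(d,n)_{v_2}(u)$ determined by the identification of the common component $C(\phi(v_1))$ with each $\PP^1_{T_i}$; this isomorphism is manifestly independent of the choice of quadruple. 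If $\phi(v_1)\neq\phi(v_2)$, both fibers equal $\bigcup_{\nu \in \Lambda} \Omega(\nu, v_1, e_1)(u) \times \Omega(\nu^C, v_2, e_2)(u)$ where $e_i$ is the edge at $v_i$ separating it from $v_{3-i}$, which again depends only on $(v_1,v_2)$. Since $\cX$ is reduced and flat over $\Mbar{r}$ by Proposition~\ref{twocharts}, agreement on all fibers promotes to equality of closed subvarieties.

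For the second assertion, swapping $q_1 \leftrightarrow q_2$ exchanges the roles of $T_1$ and $T_2$, hence of $v_1$ and $v_2$, in the definition of $\cX$. In the first case of Proposition~\ref{two charts restated} this replaces the graph of the isomorphism $G(d,n)_{v_1}(u) \to G(d,n)_{v_2}(u)$ by the graph of its inverse, which is the same subvariety of $G(d,n)_{v_1}(u) \times G(d,n)_{v_2}(u)$. In the second case, reindexing the union by $\mu = \nu^C$ and applying $(\nu^C)^C = \nu$ recovers the original union $\bigcup_\mu \Omega(\mu, v_1, e_1)(u) \times \Omega(\mu^C, v_2, e_2)(u)$.

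The main obstacle is bookkeeping: carefully tracking the identifications from Proposition~\ref{prop vertex reduction} so that the embeddings of $\cX(p_1,p_2;q_1,q_2)$ and $\cX(p'_1,p'_2;q'_1,q'_2)$ into the common ambient $G(d,n)_{v_1} \times_{U(\Gamma)} G(d,n)_{v_2}$ are unambiguous and compatible, and confirming that the abuse of notation used to pull $\cX$ back from $\Mbar{Q}\times G(d,n)_{T_1}\times G(d,n)_{T_2}$ to the larger ambient product does not disturb the fiberwise comparison.
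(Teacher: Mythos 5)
Your proposal is correct and follows essentially the same route as the paper: reduce to a fiberwise comparison of point sets (justified by the reducedness of the $\cX$'s from Proposition~\ref{twocharts}), and then observe that the fiber descriptions in Proposition~\ref{two charts restated} depend only on $(v_1, v_2)$ and the dual tree, with the $q_1 \leftrightarrow q_2$ case handled by the graph-of-inverse and $\nu \mapsto \nu^C$ reindexing observations. The only cosmetic difference is that you invoke flatness in promoting fiberwise agreement to equality of subvarieties, where reducedness alone already suffices.
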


\begin{proof}
From Proposition~\ref{twocharts}, all the schemes $\cX( \ , \ ; \ , \ )$ above are reduced, so we can check the equality on point sets.
So it is enough to show that, for every $u \in U(\Gamma)$, the fibers $\cX(p_1, p_2; q_1, q_2)(u)$ and $\cX(p'_1, p'_2; q'_1, q'_2)(u)$ are equal.
This is immediate from the description in Proposition~\ref{two charts restated}. 
The same argument shows $\cX(p_1, p_2; q_1, q_2) = \cX(p_1, p_2; q_2, q_1)$.
\end{proof}

So the following definition makes sense: For an unordered pair $\{ v_1, v_2 \} \in \Gamma^{\circ}$, we write  $\cX(v_1; v_2)$ for the subset of $G(d,n)_{v_1} \times_{U(\Gamma)} G(d,n)_{v_2}$ cut out by $\cX(p_1, p_2; q_1, q_2)$ for any $(p_1, p_2, q_1, q_2)$ with $v(p_1, p_2, q_1) = v_1$ and $v(p_1, p_2, q_2) = v_2$.  
For an edge $e$ connecting two internal vertices $v_1$ and $v_2$ of $\Gamma$, we write $\cX(e)$ for $\cX(v_1; v_2)$.

Let's sum up our progress.
We began with a subvariety of $U(\Gamma) \times G(d,n)^{\binom{r}{3}}$, cut out by $r(r-1)(r-2)(r-3)$ relations. 
However, thanks to Proposition~\ref{prop vertex reduction}, we can think of this as a subvariety of $U(\Gamma) \times G(d,n)^{r-2}$, where $r-2$ is the number of internal vertices of $\Gamma$. 
Proposition~\ref{prop edge reduction} has reduced us to $\binom{r-2}{2}$ relations: one for each pair $(v_1, v_2)$ of internal vertices of $\Gamma$.  Our eventual goal will be to reduce ourselves to $r-3$ relations: one for each internal edge of $\Gamma$.

For $w \in \Delta^{\circ}$ and $u \in U(\Gamma)$, let $G(d,n)_w(u)$ be $\bigcap_{\phi(v_1) = \phi(v_2)=w} \cX(v_1; v_2)(u)$, an intersection inside $\prod_{\phi(v) = w} G(d,n)_v(u)$. 
\begin{cor} \label{cor Delta vert reduction}
With the above notation, for any $v \in \Gamma^{\circ}$ with $\phi(v)=w$, we have the isomorphism $G(d,n)_v(u) \cong G(d,n)_w(u)$.
\end{cor}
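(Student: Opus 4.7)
The plan is as follows. For each pair of distinct internal vertices $v_1, v_2 \in \Gamma^{\circ}$ with $\phi(v_1) = \phi(v_2) = w$, I will show that $\cX(v_1; v_2)(u)$ is the graph of a well-defined isomorphism $\psi_{v_1,v_2} \colon G(d,n)_{v_1}(u) \to G(d,n)_{v_2}(u)$, and that these isomorphisms satisfy the cocycle condition $\psi_{v_2, v_3} \circ \psi_{v_1, v_2} = \psi_{v_1, v_3}$. Once both points are in hand, $G(d,n)_w(u)$ is the ``graph of the cocycle'' inside $\prod_{\phi(v) = w} G(d,n)_v(u)$, and projection onto any single factor $G(d,n)_v(u)$ is an isomorphism with inverse $x \mapsto (\psi_{v, v'}(x))_{v'}$. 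The main (and really only) obstacle will be identifying $\psi_{v_1, v_2}$ canonically enough that the cocycle condition becomes visible.

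The first point is essentially immediate from Proposition~\ref{two charts restated}: since $\phi(v_1) = \phi(v_2)$, any quadruple $(p_1, p_2, q_1, q_2)$ with $v(p_1, p_2, q_i) = v_i$ makes $\cX(p_1, p_2; q_1, q_2)(u)$ the graph of an isomorphism between $G(d,n)_{v_1}(u)$ and $G(d,n)_{v_2}(u)$, and Corollary~\ref{prop edge reduction} guarantees this graph is independent of the chosen quadruple, defining a canonical $\psi_{v_1, v_2}$.

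For the cocycle condition, I would give an intrinsic description of $\psi_{v_1, v_2}$ that manifestly factors through a single object depending only on $w$. Fix any triple $T_i \in \binom{[r]}{3}$ with $v(T_i) = v_i$. Under $\phi$, the three edges of $\Gamma$ incident to $v_i$ map to three paths out of $w$ in $\Delta$, ending at three distinguished (marked or nodal) points of the component $C(w) \cong \PP^1$. Matching the $t$-labeled point of $\PP^1_{T_i}$ to the endpoint of the path indexed by the $t$-containing side, one obtains a canonical identification $\PP^1_{T_i} \cong C(w)$, hence $G(d,n)_{T_i} \cong G(d,n)(w)$ in the notation introduced before Proposition~\ref{two charts restated}. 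Unwinding the construction of $\cX_0(p_1, p_2; q_1, q_2)$, whose defining isomorphism factors through the ambient curve, $\psi_{v_1, v_2}$ is precisely the composition
\[
G(d,n)_{v_1}(u) \cong G(d,n)_{T_1} \cong G(d,n)(w) \cong G(d,n)_{T_2} \cong G(d,n)_{v_2}(u).
\]
Since all the $\psi_{v_i, v_j}$ factor through the single Grassmannian $G(d,n)(w)$, the cocycle condition is automatic, and the corollary drops out.
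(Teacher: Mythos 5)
Your proof is correct and follows the same route as the paper, whose entire argument is the one-line observation that the $\cX(v_1;v_2)(u)$ are graphs of ``compatible isomorphisms''; you have simply supplied the detail behind the word ``compatible,'' namely the cocycle condition obtained by factoring each $\psi_{v_1,v_2}$ through the single Grassmannian $G(d,n)(w)$ attached to the component $C(w)$. This is a faithful (and more complete) expansion of the paper's proof rather than a different method.
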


\begin{proof}
As in the proof of Proposition~\ref{prop vertex reduction}, we just need to note that all of the $\cX(v_1; v_2)(u)$ all compatible isomorphisms.
\end{proof}

\begin{cor} \label{cor Delta edge reduction}
Let $u \in U(\Gamma)$. Let $w_1$ and $w_2$ be distinct vertices of $\Delta^{\circ}$. Let $v_1$, $v'_1$, $v_2$ and $v'_2$ be vertices in $\Gamma^{\circ}$ with $\phi(v_i) = \phi(v'_i) = w_i$. Then the identifications $G(d,n)_{v_i}(u) \cong G(d,n)_{v'_i}(u)$ for $i=1$, $2$, identifies $\cX(v_1; v_2)(u)$ with $\cX(v'_1; v'_2)(u)$.
\end{cor}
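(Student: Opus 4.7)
The plan is to apply the explicit local description in Proposition~\ref{two charts restated} to both $\cX(v_1;v_2)(u)$ and $\cX(v'_1;v'_2)(u)$, and then verify that, under the identifications of Corollary~\ref{cor Delta vert reduction}, they become the same union of products of Schubert varieties inside $G(d,n)_{w_1}(u) \times G(d,n)_{w_2}(u)$.

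First, since $w_1 \neq w_2$ and hence $\phi(v_1) \neq \phi(v_2)$ (and analogously for the primed pair), Proposition~\ref{two charts restated} gives
\[
\cX(v_1;v_2)(u) = \bigcup_{\nu \in \Lambda} \Omega(\nu, v_1, e_1)(u) \times \Omega(\nu^C, v_2, e_2)(u),
\]
where $e_i$ is the $\Gamma$-edge at $v_i$ separating $v_i$ from $v_{3-i}$, and similarly for $\cX(v'_1;v'_2)(u)$ with $e'_1, e'_2$. Under the isomorphism $G(d,n)_{v_i}(u) \cong G(d,n)_{w_i}(u)$ of Corollary~\ref{cor Delta vert reduction}, the Schubert variety $\Omega(\nu,v_i,e_i)(u)$ is cut out by the osculating flag at the marked point of $C(w_i)$ recorded by the direction $e_i$. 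The key tree-theoretic observation is that, since $\Delta$ is a tree, there is a unique edge $f_i$ of $\Delta$ incident to $w_i$ lying on the (unique) path from $w_i$ to $w_{3-i}$. Tracing the $\Gamma$-path from $v_i$ to $v_{3-i}$ and projecting by $\phi$, this path must exit $w_i$ through $f_i$; whether $\phi(e_i) = f_i$ directly or $\phi(e_i) = w_i$ (in which case subsequent edges on the path within $\phi^{-1}(w_i)$ eventually cross $f_i$), the marked point of $C(w_i)$ recorded by $e_i$ is $p(w_i, f_i)$. The same reasoning applies to $e'_i$ at $v'_i$, so both $\Omega(\nu, v_i, e_i)(u)$ and $\Omega(\nu, v'_i, e'_i)(u)$ pull back to the same Schubert variety $\Omega(\nu, w_i, f_i)(u)$ in $G(d,n)_{w_i}(u)$.

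Combining the two steps, both $\cX(v_1;v_2)(u)$ and $\cX(v'_1;v'_2)(u)$ correspond, under the identifications, to the common union
\[
\bigcup_{\nu \in \Lambda} \Omega(\nu, w_1, f_1)(u) \times \Omega(\nu^C, w_2, f_2)(u)
\]
inside $G(d,n)_{w_1}(u) \times G(d,n)_{w_2}(u)$, so the two subvarieties coincide. The main obstacle is notational rather than conceptual: carefully unwinding how a $\Gamma$-edge $e_i$ at $v_i \in \Gamma^\circ$ determines a $\Delta$-edge $f_i$ at $w_i = \phi(v_i)$, and verifying that this assignment depends only on the ordered pair $(w_i, w_{3-i})$ and not on the chosen lifts $(v_i, v_{3-i})$. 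Once this independence is established, the equality of the two decompositions is immediate from Proposition~\ref{two charts restated}.
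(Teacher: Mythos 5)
Your argument is correct and is exactly the justification the paper intends: the paper states this corollary without an explicit proof, treating it as immediate from Proposition~\ref{two charts restated} in the same way that Corollary~\ref{prop edge reduction} was. The tree-theoretic check you spell out --- that each $\Gamma$-direction $e_i$ at $v_i$ induces the unique $\Delta$-edge $f_i$ at $w_i$ on the path toward $w_{3-i}$, hence the same marked point of $C(w_i)$ independently of the chosen lift --- is precisely the detail the paper leaves implicit.
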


The above corollaries let us make definitions for $\Delta$ similar to those for $\Gamma$: We write $\cX(w_1; w_2)(u)$ for $\cX(v_1; v_2)(u)$ with $v_1$ and $v_2$ as in the above corollary.
For an edge $e$ connecting two internal vertices $w_1$ and $w_2$ of $\Delta$, we write $\cX(e)(u)$ for $\cX(w_1; w_2)(u)$.

\begin{prop} \label{edge intersection good}
The intersection $\bigcap_e \cX(e)$, where $e$ runs over the $r-3$ internal edges of $\Gamma$, is Cohen-Macaulay, reduced and flat of relative dimension $d(n-d)$ over $U(\Gamma)$.
\end{prop}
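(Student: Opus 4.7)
The plan is to establish, for each partial intersection $X_k := \bigcap_{i \leq k} \cX(e_i)$ after fixing an arbitrary ordering of the internal edges $e_1, \ldots, e_{r-3}$ of $\Gamma$: (i) fibers of $X_k \to U(\Gamma)$ are non-empty and pure of codimension $k \cdot d(n-d)$ in $\prod_{v \in \Gamma^\circ} G(d,n)_v$; (ii) $X_k$ is Cohen-Macaulay and flat over $U(\Gamma)$; (iii) $X_{r-3}$ is reduced. Steps (i) and (ii) proceed by induction on $k$, and (iii) is a coda.

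For (i), fix $u \in U(\Gamma)$ with stable curve $(C, z_\bullet)$ and dual tree $\Delta$. Proposition~\ref{two charts restated} describes each $\cX(e_i)(u)$ as either the graph of an isomorphism between two factors $G(d,n)_v(u)$ (when $\phi$ collapses $e_i$) or the union $\bigcup_\nu \Omega(\nu, w_1, e_i)(u) \times \Omega(\nu^C, w_2, e_i)(u)$. Distributing, $X_k(u)$ becomes a union of strata indexed by a partition choice on each non-collapsing edge; after accounting for the identifications imposed by the collapsing edges, each stratum is a product, over the quotient vertex set, of intersections of Schubert varieties at distinct special points of the respective components $C_w$ (distinctness follows from stability). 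Proposition~\ref{Schub trans} applied component by component makes each inner Schubert intersection pure of the expected codimension $\sum |\nu|$, and summing across the $k$ edges using $|\nu| + |\nu^C| = d(n-d)$ gives each non-empty stratum pure codimension $k \cdot d(n-d)$, hence so does $X_k(u)$. For non-emptiness, Krull's theorem in the smooth ambient $U(\Gamma) \times \prod_v G(d,n)_v$ gives $\dim X_k \geq \dim U(\Gamma) + (r-2-k)d(n-d)$; since $X_k \to U(\Gamma)$ is proper and the stratum upper bound forces fiber dimensions $\leq (r-2-k)d(n-d)$, the image cannot be properly contained in the irreducible $U(\Gamma)$, so every fiber is non-empty.

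With (i) in hand, (ii) is formal: Corollary~\ref{proper plus miracle} applied to $X_{k-1}$ and $\cX(e_k)$ (both Cohen-Macaulay and flat over $U(\Gamma)$, by induction and Proposition~\ref{twocharts}) with the fiber codimension supplied by (i) yields $X_k$ flat with Cohen-Macaulay fibers, and a flat family over a smooth base with Cohen-Macaulay fibers is Cohen-Macaulay. For (iii), $X_{r-3}$ is flat over the irreducible $U(\Gamma)$ with irreducible generic fiber $G(d,n)$, so $X_{r-3}$ is itself irreducible; over $\Mzero{r} \cap U(\Gamma)$ it is isomorphic to that open times $G(d,n)$ and hence reduced, so the unique generic point of $X_{r-3}$ has reduced local ring, and Serre's $S_1$ condition supplied by Cohen-Macaulayness upgrades this to reducedness of the whole space.

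The main obstacle is the stratum analysis in (i): correctly combining Proposition~\ref{two charts restated} and Proposition~\ref{Schub trans} requires tracking how partitions at the two ends of each edge interact at components shared by multiple edges, but the key simplification is that on each component $C_w$ the relevant special points are distinct (from stability), so the Schubert transversality of Proposition~\ref{Schub trans} applies directly to give pure codimension on each factor. Phases (ii) and (iii) then follow formally.
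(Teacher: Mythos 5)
Your proof is correct and follows essentially the same route as the paper's: stratify each fiber by partition labelings of the non-collapsed edges via Proposition~\ref{two charts restated}, compute the stratum dimensions with Proposition~\ref{Schub trans}, conclude flatness and Cohen--Macaulayness from Corollary~\ref{proper plus miracle}, and deduce reducedness from generic reducedness over $\Mzero{r}$. Your edge-by-edge induction and the explicit Krull/properness argument for non-emptiness of the fibers are refinements of details the paper leaves implicit, not a different method.
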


We'll write $\Gamma^{\circ}_{1}$ for the set of internal edges of $\Gamma$, and write $\Delta^{\circ}_1$ similarly.

\begin{proof}
By Proposition~\ref{prop vertex reduction}, $G(d,n)_v$ is a trivial bundle over $U(\Gamma)$ with fibers isomorphic to $G(d,n)$. 
So $\prod_{v \in \Gamma^{\circ}} G(d,n)_v$ is a trivial bundle with fibers $G(d,n)^{r-2}$. In particular, the fibers are smooth of dimension $(r-2) d(n-d)$.
And each $\cX(e)$ is Cohen-Macaulay of comdimension $d(n-d)$ (Proposition~\ref{twocharts}). 
In the next several paragraphs, our goal is to show that the fibers of $\bigcap_e \cX(e)$ are of expected dimension: namely,  $(r-2) d(n-d) - (r-3) d(n-d) = d(n-d)$. 
%By Corollary~\ref{proper plus miracle}, this will show that $\bigcap_e \cX(e)$ is Cohen-Macaulay and is flat over $U(\Gamma)$.
%For some $Z \to U(\Gamma)$ and $u \in U$, we write $Z(u)$ for the fiber of $Z$ over $u$.

So, let $u \in U(\Gamma)$.
We use the notations $\Delta$, $\phi$ etcetera that we have associated with $u$ above.
Let $s = |\Delta^{\circ}|$.

First consider the intersection $\bigcap_{\phi\ \mbox{collapses}\ e} \cX(e)$, in the fiber over $u$, where we only intersect over edges $e$ of $\Gamma$ collapsed by $\phi$.
By Corollaries~\ref{cor Delta vert reduction} and~\ref{cor Delta edge reduction}, this intersection is $G(d,n)^s$ and, for $e$ an internal edge of $\Delta$, it makes sense to take about the subvariety $\cX(e)$ in this $G(d,n)^s$.
So, instead of studying the subvariety $\bigcap_{e \in \Gamma_1^{\circ}} \cX(e)$ of $G(d,n)^{r-2}$, we can study the  subvariety $\bigcap_{e \in \Delta_1^{\circ}} \cX(e)$ of $G(d,n)^{s}$.

Let $w \in \Delta^{\circ}$. Consider the intersection $\bigcap_{e \in \Delta^{\circ}_1} \cX(e)(u)$.
Let $e$ be an internal edge of $\Delta$ connecting $v_1$ and $v_2$. By Proposition~\ref{two charts restated}, 
$$\cX(e)(u) = \bigcup_{\nu \in \Lambda} \Omega(\nu, v_1, e)(u) \times \Omega(\nu^{C}, v_2, e)(u)$$

Let $N$ be the set of maps $\nu$  which, to each pair $(v,e)$ of edge $e \in \Delta^{\circ}_1$ and endpoint $v$ of $e$, assigns a partition $\nu(v,e)$ obeying the following condition:
For $v_1$ and $v_2$ the two endpoints of $e$, we have $\nu(v_1, e) = \nu(v_2, e)^{C}$.

Since we are working on the level of pointsets, intersection distributes over union and we deduce that
$$\bigcap_{e \in \Delta^{\circ}_1} \cX(e)(u) = \bigcup_{\nu \in N}  \bigcap_{e \in \Delta_1^{\circ}} \Omega(\nu(v_1, e), v_1, e)(u) \times \Omega(\nu(v_2, e), v_2, e)(u)$$
where $v_1$ and $v_2$ are the endpoints of $e$.
We can regroup the righthand side as
$$\bigcup_{\nu \in N} \prod_{v \in \Delta^{\circ}} \bigcap_{\substack{e \ni v  \\ e \in \Delta_1^{\circ}}} \Omega(\nu(v,e), v,e)(u).$$
Note that we have engaged in an abuse of notation, where $\Omega(\nu(v,e),v,e)(u)$ in the two displayed equations denote subvarieties of products $\prod G(d,n)_{v_i}(u)$ with different sequences $v_i$.

We will show that every nonempty term in the left hand union has dimension $d(n-d)$ as desired.
By Proposition~\ref{Schub trans}, the Schubert intersection $ \bigcap_{e \ni v} \Omega(\nu(v,e), v,e)(u)$ has codimension $\sum_{e \ni v} |\nu(v,e)|$.
So the dimension of the $\nu$-term is
$$s d(n-d) - \sum_{v \in \Delta^{\circ}} \sum_{\substack{e \ni v \\ e \in \Delta^{\circ}_1}} |\nu(v,e)| .$$
For each edge $e \in \Delta^{\circ}_1$, group together the two terms coming from $e$; their sum is $d(n-d)$. So the above expression is
$$s d (n-d) - |\Delta^{\circ}_1| d(n-d)   = \left( s - (s-1) \right) d (n-d) = d(n-d).$$
as desired.

We have now shown that the fibers of  $\bigcap_{e} \cX(e)$ are of the expected dimension.
By Corollary~\ref{proper plus miracle}, this shows that $\bigcap_e \cX(e)$ is Cohen-Macaulay and is flat over $U(\Gamma)$.
We still must show that $\bigcap_e \cX(e)$ is reduced. Cohen-Macaulay varieties, if generically reduced, are reduced, so we just need to show that $\bigcap_e \cX(e)$ is generically reduced.
So it is enough to check that, for $u \in U(\Gamma)  \cap \Mzero{r}$, the fiber $\bigcap_e \cX(e)(u)$ is reduced. This fiber is isomorphic to $G(d,n)$.
\end{proof}

 \begin{prop}
The intersection $\bigcap_{e} \cX(e)$, inside $\prod_{v \in \Gamma^{\circ}} G(d,n)_v$, is $\cG(d,n)$.
\end{prop}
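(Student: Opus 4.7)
My plan is to establish the two inclusions separately. The inclusion $\cG(d,n)|_{U(\Gamma)} \subseteq \bigcap_{e \in \Gamma^{\circ}_1} \cX(e)$ is immediate from the definitions: by Corollary~\ref{prop edge reduction}, $\cG(d,n)$ (viewed inside $U(\Gamma) \times \prod_{v \in \Gamma^{\circ}} G(d,n)_v$) is the intersection of the subvarieties $\cX(v_1;v_2)$ over \emph{all} unordered pairs $\{v_1,v_2\} \in \binom{\Gamma^{\circ}}{2}$, while $\bigcap_e \cX(e)$ imposes only the subset of these conditions coming from adjacent pairs. More equations cut out a smaller scheme.

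For the reverse inclusion I will invoke the following principle: if $Y$ is a reduced and irreducible scheme and $Z \hookrightarrow Y$ is a closed subscheme whose underlying set contains a non-empty open of $|Y|$, then $Z = Y$. (Indeed, irreducibility forces $|Z| = |Y|$ and then reducedness of $Y$ forces the ideal sheaf of $Z$ in $Y$ to be zero.) I will apply this with $Y = \bigcap_e \cX(e)$ and $Z = \cG(d,n)|_{U(\Gamma)}$.

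The main obstacle will be verifying that $Y$ is irreducible. Reducedness and flatness of $Y$ over $U(\Gamma)$, with relative dimension $d(n-d)$, are already supplied by Proposition~\ref{edge intersection good}. The base $U(\Gamma)$ is a non-empty open of the irreducible variety $\Mbar{r}$, hence irreducible; flatness then forces every associated point of $Y$ to dominate $U(\Gamma)$, so every irreducible component of $Y$ meets the preimage of any non-empty open of $U(\Gamma)$. Over any $u \in \Mzero{r} \cap U(\Gamma)$ the fiber of $Y$ is isomorphic to $G(d,n)$ (this was established at the very end of the proof of Proposition~\ref{edge intersection good}), and $G(d,n)$ is irreducible. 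A standard comparison argument—each component of $Y$ has a unique generic point, and these all map to the generic point of $U(\Gamma)$, so they are all generic points of the same fiber over the generic point of $U(\Gamma)$, which has a unique generic point—forces $Y$ to consist of a single component.

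To finish, I note that over $\Mzero{r} \cap U(\Gamma)$ every $\cX(v_1;v_2)$ is the graph of an isomorphism $G(d,n)_{v_1} \to G(d,n)_{v_2}$, and imposing all such pairwise graph conditions is the same as imposing just the edge-adjacent ones (both pick out the diagonally embedded $G(d,n)$). Hence $Z$ and $Y$ agree as subschemes over the dense open $\Mzero{r} \cap U(\Gamma)$, so $|Z|$ contains the non-empty open preimage of $\Mzero{r} \cap U(\Gamma)$ inside $|Y|$. The principle above then gives $Z = Y$, completing the proof. All the substance is in the irreducibility step; the rest is a formal consequence of Proposition~\ref{edge intersection good} and the $\Mzero{r}$-fiber computation already in hand.
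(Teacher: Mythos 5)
Your argument is essentially the paper's: one inclusion is formal, and the reverse inclusion is reduced, via the reducedness and flatness supplied by Proposition~\ref{edge intersection good}, to the computation over $U(\Gamma) \cap \Mzero{r}$ where every $\cX(v_1;v_2)$ is the graph of a composite of the edge isomorphisms. The only substantive difference is that you route the reduction through irreducibility of $Y = \bigcap_e \cX(e)$, whereas the paper only needs that every component of $Y$ dominates $U(\Gamma)$ (which flatness gives directly) and then checks the containment $\cX(v;v') \supseteq \bigcap_e \cX(e)$ pointwise over the dense open locus. One step of yours is stated too loosely: irreducibility of the fibers of $Y$ over closed points of $\Mzero{r} \cap U(\Gamma)$ does not by itself imply that the generic fiber has a unique generic point (compare $V(x^2 - t^2) \to \mathbb{A}^1_t$, where the fiber over $t=0$ is irreducible but the generic fiber is not). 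The fix is immediate in your setting: over $U_0 = U(\Gamma)\cap\Mzero{r}$ all the edge conditions are graphs of compatible isomorphisms, so $Y|_{U_0}$ projects isomorphically onto $U_0 \times G(d,n)_{v_0}$ for any fixed $v_0 \in \Gamma^{\circ}$ and is therefore irreducible; since every component of $Y$ meets this dense open, $Y$ is irreducible and your closing principle applies.
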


\begin{proof}
As discussed above, we have $\cG(d,n) = \bigcap \cX(v; v')$ where $\{ v, v' \}$ runs over all unordered pairs of interval vertices of $\Gamma$. 
We want to show that $\bigcap \cX(v; v') = \bigcap \cX(e)$. The right hand side is the intersection of a smaller collection of varieties than the left hand side, so  
$\bigcap \cX(v; v') \subseteq \bigcap \cX(e)$ is obvious. To prove the reverse containment, we must establish that, for any two internal vertices $v$ and $v'$ of $\Gamma$, we have $\cX(v; v') \supseteq \bigcap \cX(e)$. 
We showed above that $\bigcap \cX(e)$ is reduced, so it is enough to check this containment on point sets.
Even better, we showed above that $\bigcap \cX(e)$ is flat over $U(\Gamma)$. So it is enough to check this containment over an open subset of $U(\Gamma)$.

We work over the open subset $U_0 = U(\Gamma) \cap \Mzero{r}$. Let $v=v_0$, $v_1$, $v_2$, \dots, $v'=v_{\ell}$ be the path through $\Delta$ from $v$ to $v'$. 
Over this open subsets, all of the families $\cX(v_{i-1}; v_i)$ are graphs of isomorphisms and the projection of $\bigcap \cX(v_{i-1}; v_i)$ onto $G(d,n)_{v} \times_{U_0} G(d,n)_{v'}$ 
is the composite isomorphism.
Moreover, this composite isomorphism is $\cX(v;v')$. So $\cX(v;v') \supseteq \bigcap_{i=1}^r \cX(v_{i-1}; v_i)$ over $U_0$, as desired.
\end{proof}
 
 We are now ready to prove the first of main Theorems:
 \begin{proof}[Proof of Theorem~\ref{thm Grass family}]
Fix a trivalent tree $\Gamma$. Over $U(\Gamma)$, we have shown that $\bigcap_{e \in \Gamma} \cX(e) = \cG(d,n)$, and that the left hand intersection is Cohen-Macaulay, reduced, and flat over $U(\Gamma)$.
Since the conditions of being Cohen-Macaulay, reduced, and flat are local on the base, and since the $U(\Gamma)$ cover $\Mbar{r}$, this shows that $\cG(d,n)$ is Cohen-Macaulay, reduced, and flat over $\Mbar{r}$.
 \end{proof}
 
We now move on to proving Theorem~\ref{thm Schub family}. 
Fix partitions $\lambda_1$, $\lambda_2$, \dots, $\lambda_r$ in $\Lambda$. Let $\Gamma$ be as before.

\begin{prop}
Let $t \in [r]$ and let $v \in \Gamma^{\circ}$ be the vertex adjacent to $t$. Let $p$, $q$ and $p'$, $q'$ be in $[r]$ so that $v(p,q,t) = v(p',q',t) = v$.
Then $\Omega(\lambda_t, t)_{p,q,t} \cap G(d,n)_v = \Omega(\lambda_t,t)_{p',q',t} \cap G(d,n)_v$ over $U(\Gamma)$.
\end{prop}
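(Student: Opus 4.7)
The plan is to identify both intersections as pullbacks of Schubert varieties along the two isomorphisms $G(d,n)_v \cong G(d,n)_T \times U(\Gamma)$ and $G(d,n)_v \cong G(d,n)_{T'} \times U(\Gamma)$ provided by Proposition~\ref{prop vertex reduction}, where $T = \{p,q,t\}$ and $T' = \{p',q',t\}$. By the abuse-of-notation convention for preimages, $\Omega(\lambda_t,t)_{p,q,t} \cap G(d,n)_v$ corresponds under the first isomorphism to $\Omega(\lambda_t,t)_T \times U(\Gamma)$, and similarly on the primed side. So the proposition reduces to showing that the composite isomorphism
\[
\psi : G(d,n)_T \times U(\Gamma) \longrightarrow G(d,n)_{T'} \times U(\Gamma)
\]
carries $\Omega(\lambda_t,t)_T \times U(\Gamma)$ onto $\Omega(\lambda_t,t)_{T'} \times U(\Gamma)$ as a closed subscheme.

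I would first verify this over the dense open $U_0 := U(\Gamma) \cap \Mzero{r}$. For $u \in U_0$ the curve $C(u)$ is smooth, and Proposition~\ref{two charts restated} (combined, in the case $|T \cap T'| = 1$, with a chain through one intermediate triple $T''$ still satisfying $v(T'') = v$) identifies the fiber $\psi_u$ with the isomorphism $G(d,n)_T \to G(d,n)_{T'}$ induced by the composite $\PP^1_T \cong C(u) \cong \PP^1_{T'}$ of the unique $\PGL_2$-identifications sending labeled triples to marked triples. Both of these identifications send the labeled point $z(t)$ to the marked point $z_t(u)$, so their composite fixes $z(t)$, hence preserves the osculating flag at $z(t)$, hence carries the Schubert variety $\Omega(\lambda_t,t)_T$ onto $\Omega(\lambda_t,t)_{T'}$. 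Thus $\psi\bigl(\Omega(\lambda_t,t)_T \times U_0\bigr) = \Omega(\lambda_t,t)_{T'} \times U_0$ as closed subschemes.

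To finish, I would extend from $U_0$ to $U(\Gamma)$ by a closure argument. Both $\psi(\Omega(\lambda_t,t)_T \times U(\Gamma))$ and $\Omega(\lambda_t,t)_{T'} \times U(\Gamma)$ are reduced, irreducible closed subschemes of $G(d,n)_{T'} \times U(\Gamma)$ of the same dimension, since Schubert varieties are reduced and irreducible, $U(\Gamma)$ is irreducible, and $\psi$ is an isomorphism. Their agreement over the dense open $U_0$ therefore forces agreement on all of $U(\Gamma)$, and pulling back via the second projection gives the claimed equality of closed subschemes in $G(d,n)_v$.

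The main obstacle will be the bookkeeping in the second step: one must verify that the intersection of $\cX$-graph conditions defining $G(d,n)_v$ can, fiberwise over $U_0$, be resolved into a single $\PGL_2$-isomorphism of $\PP^1$'s coming from the marked rational curve $C(u)$, and then observe that every link in whatever chain one uses fixes the labeled point $z(t)$. Once this identification of $\psi|_{U_0}$ is pinned down, the rest is a routine dimension-and-closure argument.
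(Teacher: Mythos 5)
Your proposal is correct, and its engine is the same as the paper's: the identifications between the various $G(d,n)_T$ with $v(T)=v$ come from isomorphisms of marked $\PP^1$'s that fix the point labeled $t$, hence preserve the osculating flag there and carry $\Omega(\lambda_t,t)_T$ to $\Omega(\lambda_t,t)_{T'}$. The organization differs. The paper argues directly over all of $U(\Gamma)$: after relabeling so that $p,p'$ lie in one component of $\Gamma\setminus v$ and $q,q'$ in the other, it chains the two graph isomorphisms $\cX(p,t;q,q')$ and $\cX(q,t;p,p')$, each of which changes only one non-$t$ entry of the triple and visibly equates the Schubert conditions at $t$. You instead verify the statement fiberwise over the dense open $U_0 = U(\Gamma)\cap \Mzero{r}$ and then spread it out by an irreducibility-plus-reducedness closure argument; that argument is sound (both sides are reduced, irreducible, closed, and agree on a dense open), and it buys you the right to ignore what the composite isomorphism looks like over the boundary, at the cost of an extra layer of argument the paper avoids by working with the $\cX$'s, which are graphs of isomorphisms over every point of $U(\Gamma)$ since $v(p,t,q)=v(p,t,q')$. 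One small point you gloss over and the paper handles explicitly: when $T$ and $T'$ share only $t$, the intermediate triple $T''$ with $v(T'')=v$ must be chosen with care, since $\Gamma\setminus v$ has exactly two non-leaf components and, depending on how $p,q,p',q'$ are distributed between them, only one of $(p,q',t)$ and $(p,p',t)$ will satisfy $v(T'')=v$; this is exactly the paper's opening relabeling step, and it should be made explicit in your chain.
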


\begin{proof}
We may exchange labels so that $p$ and $p'$ are in one component of $\Gamma \setminus v$ and $q$ and $q'$ are in the other.
The isomorphism $\cX(p,t; q,q')$ equates $\Omega(\lambda_t, t)_{p,q,t}$ and $\Omega(\lambda_t,t)_{p,q',t}$; the isomorphism $\cX(q,t; p, p')$ equates $\Omega(\lambda_t,t)_{p,q',t}$ and $\Omega(\lambda_t,t)_{p',q',t}$.
\end{proof}

So it makes sense to talk about the Schubert variety $\Omega(\lambda_t, t)$ in $G(d,n)_v$ without talking about which $T$ we are using to define it.
And $\cS(\lambda_{\bullet})$ over $U(\Gamma)$ is $\cG(d,n) \cap \bigcap_{t \in [r]} \Omega(\lambda_t, t)$.

Let $u \in U(\Gamma)$. Reuse the notations $\Delta$ and so forth. 
%For $e$ an edge joining a leaf $t$ of $\Delta$ to a vertex $v \in \Delta^{\circ}$, let $\Omega(\lambda, e, v)(u)$ be the Schubert subvariety  of $G(d,n)_v$ where we impose Schubert condition $\lambda$ at the marked point $t$. 
%So the notation  $\Omega(\lambda, e, v)(u)$ is now meaningful for $e$ any edge of $\Delta$ containing $v \in \Delta^{\circ}$ -- whether an interior edge or one bordering a leaf.
We write $\Delta_1$ for the edge set of $\Delta$.

\begin{proof}[Proofs of Theorem~\ref{thm Schub family} and~\ref{thm fibers}]
As in the proof of Theorem~\ref{thm Grass family}, it is enough to check the claim over $U(\Gamma)$; all of the statements which follow take place over $U(\Gamma)$ (or specified subsets thereof).
The family $\prod_{v \in \Gamma^{\circ}} G(d,n)_v$ has dimension $(r-2) d(n-d)$ over $U(\Gamma)$.
The Cohen-Macaulay family $\cG(d,n)$ has codimension $(r-3) d(n-d)$; the Cohen-Macaulay family $\bigcap_{t \in [r]} \Omega(\lambda_t, t)$ has codimension $\sum_{t \in [r]} |\lambda_t|$. 
So, to prove that the intersection is Cohen-Macaulay and flat, we simply must check that all the fibers have the right dimension.

Fix $u \in U(\Gamma)$ so that we will compute the dimension of the fibers of $u$. As in the proof of Theorem~\ref{thm Grass family}, this fiber is 
$$\bigcup_{\nu \in N} \prod_{v \in \Delta^{\circ}} \left( \bigcap_{\substack{e \ni v \\ e \in \Delta_1^{\circ}}} \Omega(\nu(v,e), v,e)(u) \ \  \ \cap \! \! \! \bigcap_{\substack{t \in [r] \\ t \ \mbox{neighbors} \ v}} \Omega(\lambda_t, t)_v \right).$$

As in the proof of Proposition~\ref{edge intersection good}, this has dimension
$$s d(n-d) - \sum_{v \in \Delta^{\circ}} \sum_{\substack{e \ni v \\ e \in \Delta^{\circ}_1}} |\nu(v,e)| - \sum_{t \in [r]} |\lambda_t| = d(n-d) - \sum |\lambda_t|.$$
We now deduce the Cohen-Macaulayness and flatness as before.

What remains is to check reducedness of a generic fiber; this again follows by Proposition~\ref{Schub trans}.

In the course of our proof, we have also described the fibers of $\cS(\lambda_{\bullet})$ over $U(\Gamma)$. Our description is that of Theorem~\ref{thm fibers}.
\end{proof}

We also check Proposition~\ref{dimension equality}.
\begin{proof}[Proof of Proposition~\ref{dimension equality}]
Let $\sum_{t \in [r]} |\lambda_t| = d(n-d)$. Fix a stable curve $C$. Fix a map $\nu$ as in Theorem~\ref{thm fibers}. If $\nu$ contributes a nonempty variety to the above union, then every factor in the product over $v \in \Delta^{\circ}$ must be nonempty. So, for every vertex $v \in \Delta^{\circ}$, we have
$$\sum_{x \in C_j} |\nu(x, C_j)| =\sum_{\substack{e \ni v \\ e \in \Delta_1^{\circ}}} |\nu(v,e)|  + \sum_{\substack{t \in [r] \\ t \ \mbox{neighbors} \ v}} |\lambda_t| \leq d(n-d).$$
Summing over all $v$ as in the proof of Proposition~\ref{edge intersection good}, the above inequality implies that $\sum_{t \in [r]} |\lambda_t| \leq d(n-d)$. 
But our hypothesis is that $\sum |\lambda_t| = d(n-d)$, so the above inequality must be equality for every $v$.
\end{proof}

Finally, we move on to the claim about real points.

\begin{proof}[Proof of Theorem~\ref{thm real family}]
Let $\sum_{t \in [r]} |\lambda_t| = d(n-d)$. Let $u \in \Mbar{r}(\RR)$. We must consider the fiber of $\cS(\lambda_{\bullet})$ over $u$.
This fiber is
$$\bigcup_{\nu \in N} \prod_{v \in \Delta^{\circ}} \left( \bigcap_{\substack{e \ni v \\ e \in \Delta_1^{\circ}}} \Omega(\nu(v,e), v,e)(u) \ \ \  \cap \! \! \! \bigcap_{\substack{t \in [r] \\ t \ \mbox{neighbors} \ v}} \Omega(\lambda_t, t)_v \right)$$
By Proposition~\ref{dimension equality}, we only get a nonempty contribution to the union when 
$$\sum_{x \in C_j} |\nu(C_j, x)| = d(n-d)$$
for every $v \in \Delta^{\circ}$. Then, by the Shapiro-Shapiro conjecture, the term in the large parenthesis is a reduced union of $\RR$ points. 
So the interior fiber of $\cS(\lambda_{\bullet})$ over $u$ is a reduced union of $\RR$ points, as desired.
\end{proof}

For the rest of the paper, we will be interested in the case where $\sum_{t \in [r]} |\lambda_t| = d(n-d)$, and will be primarily interested in the case of real points.
Let's emphasize what our results say in this case.
$\cS(\lambda_{\bullet})$ is a finite cover of $\Mbar{r}(\RR)$. 
For $u \in \Mbar{r}(\RR)$, let $\Delta$ be the tree for $u$ as before. There are finitely many points of $\cS(\lambda_{\bullet})$ above $u$; fix one such point $x$.
For every vertex $v \in \Delta^{\circ}$, and edge $e \in \Delta_1$ incident to $e$, the point $x$ assigns a partition $\nu(e,v)$ to $(v,e)$. When $e \in \Delta^{\circ}_1$, we have $\nu(v,e) = \nu'(v,e)$; when $e$ joins $v$ to the leaf $t$, we have $\nu(e,v) = \lambda_t$. 
At every vertex $v \in \Delta^{\circ}$, we have $\sum_{e \ni v} |\nu(e,v)| = d(n-d)$.

\textbf{Remark:} It is straightforward to make analogous definitions for general $G/P$. The proof that $\cG(d,n)$ is Cohen-Macualay and flat over $\Mbar{r}$ generalizes straightforwardly. However, Proposition~\ref{Schub trans} fails \cite[Section~3.3.6]{SottExp}, so the theorems about $\cS(\lambda_{\bullet})$ may not generalize; there is much work to be done here.
In the next section, we will start proving results which are very special to Grassmannians.

\section{Monodromy along Curves}

Recall that we write $\square$ for the partition $(1)$. Our goal in this section is to understand the family $\cS(\lambda, \mu, \square, \square)$, where $|\lambda| + |\mu| = d(n-d)-2$. 
So $\cS(\lambda, \mu, \square, \square)$ is a finite cover of $\Mbar{4} \cong \PP^1$. 

We first recall (a special case of) Pieri's rule~\cite[Theorem 7.15.7]{EC2}:
\begin{prop} \label{Pieri box}
Let $|\lambda| + |\mu| = d(n-d)-1$. The Littlewood-Richardson coefficient $c_{\lambda \mu \square}^{\smallrect}$ is $1$ if $\mu^{C}$ is obtained by adding a single box to $\lambda$, and $0$ otherwise.
\end{prop}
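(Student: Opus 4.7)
The plan is to reduce the statement to the classical single-box Pieri rule by means of the complement identity
$$c_{\lambda_1 \lambda_2 \cdots \lambda_s}^{\smallrect} = c_{\lambda_1 \lambda_2 \cdots \lambda_{s-1}}^{\lambda_s^C}$$
already recorded in Section~\ref{sec:notation}. First I would use the total symmetry of the Littlewood--Richardson coefficient in its lower indices to move $\mu$ into the last slot, writing
$$c_{\lambda \mu \square}^{\smallrect} = c_{\lambda \square \mu}^{\smallrect} = c_{\lambda \square}^{\mu^C}.$$
A quick degree check is in order: since $|\mu| + |\lambda| = d(n-d) - 1$, we have $|\mu^C| = d(n-d) - |\mu| = |\lambda| + 1$, so if $\mu^C / \lambda$ is nonempty it consists of a single box.

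Next I would invoke the classical Pieri rule for multiplication by a single box in $H^\ast(G(d,n))$, namely
$$[\Omega(\lambda)] \cdot [\Omega(\square)] = \sum_{\nu} [\Omega(\nu)],$$
the sum being over $\nu \in \Lambda$ obtained from $\lambda$ by adding one box; equivalently, $c_{\lambda \square}^{\nu}$ is $1$ when $\nu/\lambda$ is a single box and $0$ otherwise. Specializing to $\nu = \mu^C$ yields the proposition. There is no substantive obstacle: both the complement identity and single-box Pieri are classical, and the argument is essentially a one-line unpacking. If one preferred not to quote Pieri, the same conclusion can be read off directly from the tableau definition, since a Littlewood--Richardson tableau of shape $\mu^C/\lambda$ with content $(1)$ exists (uniquely) exactly when $\mu^C/\lambda$ is a single box.
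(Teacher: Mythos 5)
Your proposal is correct and matches the paper's treatment: the paper simply records this proposition as a special case of Pieri's rule (citing \cite[Theorem 7.15.7]{EC2}), and your reduction via the complement identity $c_{\lambda\mu\square}^{\smallrect} = c_{\lambda\square}^{\mu^C}$ together with the degree check $|\mu^C| = |\lambda|+1$ is exactly the intended unpacking. No gaps.
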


\begin{cor} \label{Pieri two box}
Let $|\lambda| + |\mu| = d(n-d)-2$. The Littlewood-Richardson coefficient $c_{\lambda \mu \square \square}^{\smallrect}$ is $2$ if $\mu^{C}$ is obtained by adding a two nonadjacent boxes to $\lambda$; is $1$ if $\mu^{C}$ is obtained by adding a horizontal or vertical domino to $\lambda$, and is $0$ otherwise.
\end{cor}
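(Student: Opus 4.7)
The plan is to reduce the corollary to a single invocation of Proposition~\ref{Pieri box} by peeling off one of the two boxes via the classical Pieri rule.

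First, I would invoke the Pieri rule $[\Omega(\mu)]\cdot[\Omega(\square)]=\sum_{\mu'}[\Omega(\mu')]$, where $\mu'$ ranges over partitions in $\Lambda$ obtained from $\mu$ by adding one box (the classical Pieri rule, \cite[Theorem 7.15.7]{EC2}, which is the same reference already used for Proposition~\ref{Pieri box}). Multiplying both sides by $[\Omega(\lambda)]\cdot[\Omega(\square)]$ and taking the coefficient of the fundamental class yields
$$c_{\lambda\mu\square\square}^{\smallrect}\;=\;\sum_{\mu'}c_{\lambda\mu'\square}^{\smallrect},$$
where the sum is over partitions $\mu'\in\Lambda$ obtained from $\mu$ by adjoining a single box. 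Since $|\lambda|+|\mu'|=d(n-d)-1$, Proposition~\ref{Pieri box} applies to each summand: it contributes $1$ when $(\mu')^{C}$ is obtained from $\lambda$ by adding a single box, and $0$ otherwise.

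Second, I would translate the condition $(\mu')^{C}=\lambda+\text{one box}$ into a condition on $\mu^{C}$. Complementation is inclusion-reversing and swaps adding a box with removing one, so as $\mu'$ ranges over partitions obtained from $\mu$ by adding one box, $(\mu')^{C}$ ranges over partitions obtained from $\mu^{C}$ by removing one box. Thus each $\mu'$ contributing to the sum corresponds to an intermediate partition $\nu=(\mu')^{C}$ fitting into a chain $\lambda\subset\nu\subset\mu^{C}$ in which each inclusion adds exactly one box. Consequently $c_{\lambda\mu\square\square}^{\smallrect}$ equals the number of standard Young tableaux of the skew shape $\mu^{C}/\lambda$, which is $0$ unless $|\mu^{C}/\lambda|=2$.

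Finally, I would carry out the trivial combinatorial case analysis for two-box skew shapes. The partition condition on $\lambda\subset\mu^{C}$ forces two boxes in the same row (respectively column) of $\mu^{C}/\lambda$ to be adjacent, i.e.\ to form a horizontal (respectively vertical) domino; in either domino case exactly one ordering of the two boxes yields a valid chain, producing a single SYT. If the two boxes lie in distinct rows and distinct columns, either ordering is valid, producing two SYT. This matches the claimed values $1$, $1$, and $2$ (and $0$ when $\mu^{C}$ is not obtained from $\lambda$ by adding two boxes at all).

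The only step with any substance is the first; it uses Pieri's rule in a non-top cohomological degree, slightly extending Proposition~\ref{Pieri box}, but the reference \cite[Theorem 7.15.7]{EC2} already cited by the paper covers it in full generality, so no additional machinery is required.
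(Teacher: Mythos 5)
Your proof is correct and follows essentially the same route as the paper's: both peel off the two single-box factors via Pieri's rule and reduce the count to the number of two-step chains $\lambda\subset\nu\subset\mu^{C}$, which is $2$, $1$, or $0$ according to whether $\mu^{C}/\lambda$ is two nonadjacent boxes, a domino, or not a two-box skew shape. The only cosmetic difference is that the paper first rewrites $c_{\lambda\mu\square\square}^{\smallrect}$ as $c_{\lambda\square\square}^{\mu^{C}}$ and applies associativity there, whereas you absorb one $\square$ into $\mu$ before complementing.
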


\begin{proof}
We want to compute $c_{\lambda \square \square}^{\mu^{C}}$. By the associativity of the Littlewood-Richardson product, we can write
$$c_{\lambda \square \square}^{\mu^{C}} = \sum_{\kappa} c_{\lambda \square}^{\kappa} c_{\kappa \square}^{\mu^{C}}.$$
We can evaluate each term on the right hand side using Proposition~\ref{Pieri box}.
The only nonzero terms are where $\kappa$ is obtained by adding a single box to $\lambda$, and $\mu^{C}$ is obtained by adding a single box to $\kappa$.
So the sum is zero unless $\mu^{C}$ is obtained by adding two boxes to $\lambda$. If those two boxes are adjacent, there is one possible intermediate $\kappa$; if the two boxes are not adjacent, there are two possible terms.
\end{proof}

\begin{cor}
If $\mu^{C}$ does not contain $\lambda$, then $\cS(\lambda, \mu, \square, \square)$ is empty. If $\mu^{C} \setminus \lambda$ is a horizontal or vertical domino, then $\cS(\lambda, \mu, \square, \square) \cong \PP^1$, and the map to $\Mbar{4}$ is an isomorphism.
\end{cor}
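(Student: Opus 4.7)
The plan is to apply Theorem~\ref{thm Schub family} to reduce the statement to a degree computation for the finite map $\cS(\lambda, \mu, \square, \square) \to \Mbar{4}$, and then read off the degree from Corollary~\ref{Pieri two box}. Since $|\lambda| + |\mu| + 2 = d(n-d)$, Theorem~\ref{thm Schub family} gives that $\cS(\lambda, \mu, \square, \square)$ is Cohen-Macaulay and flat over $\Mbar{4} \cong \PP^1$ of relative dimension zero, so the projection is a finite flat morphism of constant degree.

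First I would compute this degree by evaluating a generic fiber over $\Mzero{4}$. By Proposition~\ref{Schub trans}, a generic fiber is a transverse intersection of Schubert varieties, with cardinality equal to the Littlewood-Richardson number $c_{\lambda \mu \square \square}^{\smallrect}$. Corollary~\ref{Pieri two box} then identifies this number as $0$ when $\mu^{C}$ does not contain $\lambda$, and as $1$ when $\mu^{C} \setminus \lambda$ is a horizontal or vertical domino. In the first case a flat finite morphism of degree zero has empty source, giving the first claim at once.

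In the second case, the morphism $\cS(\lambda, \mu, \square, \square) \to \PP^1$ is finite flat of degree one, and I want to upgrade this to an isomorphism. For this it suffices that $\cS(\lambda, \mu, \square, \square)$ be reduced: if so, then $f_{\ast}\mathcal{O}_{\cS}$ is a rank-one locally free $\mathcal{O}_{\PP^1}$-algebra generated as an $\mathcal{O}_{\PP^1}$-module by the image of $1$, so it equals $\mathcal{O}_{\PP^1}$ and the morphism is an isomorphism. Reducedness combines two facts already in hand: Cohen-Macaulayness from Theorem~\ref{thm Schub family} rules out embedded components, while the transversality of the generic Schubert intersection (Proposition~\ref{Schub trans}) provides generic reducedness; Cohen-Macaulay plus generically reduced equals reduced.

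I expect the main obstacle to be packaging this last step cleanly: the passage from ``flat of degree one'' to ``isomorphism'' requires that one first upgrade Cohen-Macaulayness plus generic reducedness to full reducedness of the source before invoking the structure-sheaf argument. None of these ingredients is deep, but all of them must be assembled in the right order.
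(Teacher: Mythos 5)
Your proposal is correct and follows essentially the same route as the paper: flatness and relative dimension zero from Theorem~\ref{thm Schub family}, the degree read off from Corollary~\ref{Pieri two box}, and then the upgrade from ``finite flat of degree one'' to ``isomorphism.'' Two remarks on the details. First, Proposition~\ref{Schub trans} gives only \emph{dimensional} transversality; it does not assert that the zero-dimensional fibers are reduced (indeed, Theorem~\ref{basic monodromy} exhibits non-reduced fibers over two non-real points of $\Mzero{4}$ in the $c=2$ case), so both the identification of the degree with $c_{\lambda\mu\square\square}^{\smallrect}$ and the generic reducedness you invoke should be sourced from Theorem~\ref{thm real family} (Mukhin--Tarasov--Varchenko), which supplies a reduced fiber over any real point of $\Mzero{4}$. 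Second, your reducedness detour is not needed for the last step: for any finite flat morphism of degree one, the unit map $\mathcal{O}_{\PP^1}\to f_{\ast}\mathcal{O}_{\cS}$ is surjective on each fiber by Nakayama and hence an isomorphism of locally free rank-one sheaves, with no hypothesis on the source. The paper instead quotes the equally standard fact that a flat degree-one map onto a normal variety is an isomorphism; either finishing lemma is fine, and your argument, once the degree computation is correctly attributed, is complete.
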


\begin{proof}
The first sentence is immediate. The second is because a flat degree $1$ map to a normal variety must be an isomorphism.
\end{proof}

From now on, we assume that we are in the sole remaining case, where $\mu^{C} \setminus \lambda$ is two non-adjacent squares. Let $\kappa_1$ and $\kappa_2$ be the two partitions lying between $\lambda$ and $\mu^C$, with the added box of $\kappa_1$ to the southwest of the added box of $\kappa_2$. For example, we might have $\lambda = (3,1)$ and $\mu^{C} = (4,2)$, in which case $\kappa_1 = (3,2)$ and $\kappa_2 = (4,1)$. The next theorem summarizes our results.

\begin{theorem} \label{basic monodromy}
$\cS(\lambda, \mu, \square, \square)$ is a genus zero curve. The degree $2$ map from $\cS(\lambda, \mu, \square, \square)$ to $\Mbar{4}$ is branched over two non-real points.

As we travel around the circle $\cS(\lambda, \mu, \square, \square)(\RR)$, we encircle $\Mbar{4}$ twice, and thus the $6$ points of which lie over the boundary of $\Mbar{4}$ acquire a circular ordering. Each of these orderings corresponds to a stable curve with one node and that node is labeled by a pair of partitions $(\nu, \nu^{C})$ as discussed in the previous section. The circular ordering of the $\nu$'s is as follows:
\begin{enumerate}
\item $(\kappa_1, \kappa^{C}_1)$ 
\item $(\kappa_2, \kappa^{C}_2)$ 
\item $( (1,1), (1,1)^{C})$
\item $(\kappa_2, \kappa^{C}_2)$
\item $(\kappa_1, \kappa^{C}_1)$
\item $( (2), (2)^{C})$
\end{enumerate}
\end{theorem}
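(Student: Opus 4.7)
The plan is to combine Theorem~\ref{thm fibers}, the Pieri rule (Corollary~\ref{Pieri two box}), and the Shapiro--Shapiro conjecture (Theorem~\ref{thm real family}) to pin down the structure of the degree-$2$ cover $\cS(\lambda, \mu, \square, \square) \to \Mbar{4} \cong \PP^1$, and then extract the cyclic ordering by local tracking of labels across the three real arcs of $\Mbar{4}(\RR)$. First I would compute the fiber over each of the three boundary points. At the boundary where $\{z_1, z_2\}$ and $\{z_3, z_4\}$ lie on different components, Theorem~\ref{thm fibers} describes the fiber as a disjoint union indexed by $\nu$ with $c_{\lambda \mu \nu}^{\smallrect} \cdot c_{\square \square \nu^{C}}^{\smallrect}$ nonzero. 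Using $c_{\square \square \nu^{C}}^{\smallrect} = c_{\square \square}^{\nu}$, Pieri forces $\nu \in \{(2), (1,1)\}$; and because $\mu^{C}/\lambda$ is two non-adjacent boxes (the standing hypothesis), this skew shape is simultaneously a horizontal and a vertical strip, so Pieri also gives $c_{\lambda \mu \nu}^{\smallrect} = c_{\lambda \nu}^{\mu^{C}} = 1$ for both $\nu = (2)$ and $\nu = (1,1)$. Thus the fiber consists of two reduced points labeled $((2), (2)^{C})$ and $((1,1), (1,1)^{C})$. At each of the other two boundaries, an analogous computation forces the node partition on the $\lambda$-side to equal $\lambda$ plus one box, while its complement adds one box to $\mu$, giving exactly the two choices $\kappa_1$ and $\kappa_2$, each with multiplicity one.

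By Theorem~\ref{thm real family} all real fibers are reduced, so $\cS(\lambda, \mu, \square, \square) \to \Mbar{4}$ is unramified over $\Mbar{4}(\RR) \cong \RR\PP^1$, and any complex ramification must occur at non-real points in complex conjugate pairs. The main obstacle is to prove that $\cS(\lambda, \mu, \square, \square)$ is connected; once connectedness is established, Riemann--Hurwitz $2g-2 = -4+b$ together with the forced even, positive $b$ gives minimum $b=2$ branch points, hence $g = 0$ and $\cS \cong \PP^1$. To establish connectedness, I would use Proposition~\ref{wronski equation} to write the cover down explicitly. After normalizing $(z_1, z_2, z_3) = (0, \infty, 1)$, the surface $\Omega(\lambda, 0) \cap \Omega(\mu, \infty)$ has Pl\"ucker degree $c_{\lambda \mu \square \square}^{\smallrect} = 2$ and so (after normalizing if necessary) sits in a $\PP^3$ as a quadric $Q$. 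The linear forms of Proposition~\ref{wronski equation} give a rational family $\{\Omega(\square, z) \cap Q\}_{z \in \PP^1}$ of hyperplane sections, and the $s = z_4$ branch locus of the cover is exactly the discriminant of $\Omega(\square, 1) \cap \Omega(\square, s) \cap Q$. Classically, on a smooth quadric in $\PP^3$ a pencil of hyperplane sections acquires a tangency at exactly two parameter values, giving a degree-$2$ discriminant with two roots; by the preceding paragraph these must be complex conjugates. The main technical step will be verifying the degree-$2$ count cleanly rather than as an upper bound, which I expect to be easiest after identifying $Q$ with $\PP^1 \times \PP^1$ and observing that each $\Omega(\square, z) \cap Q$ is a bidegree-$(1,1)$ curve.

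Finally, to read off the cyclic ordering, I would trace labels continuously along the three real arcs of $\Mbar{4}(\RR)$ between the boundaries. On each arc the two real sheets of $\cS(\RR)$ have well-defined limit labels at the two endpoints; since a sheet's label is locally constant, the label at one boundary continues to a specific label at the adjacent boundary. The matching is a direct Pieri bookkeeping: the sheet labeled $\nu = (2)$ at the $\{1,2\}|\{3,4\}$ boundary (which corresponds to the horizontal-domino degeneration of $\mu^C/\lambda$) must continue to the $\kappa_i$ that arises from a horizontally-compatible one-box intermediate between $\lambda$ and $\mu^C$, while $\nu = (1,1)$ (vertical domino) continues to the vertically-compatible intermediate. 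Running this check at each of the three arcs, and using that the connected cover wraps twice so each of the two $\kappa$-type boundaries is visited twice, produces the stated circular order $(\kappa_1, \kappa_2, (1,1), \kappa_2, \kappa_1, (2))$.
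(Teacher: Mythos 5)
Your fiber computations over the three boundary points are correct, and your route to ``genus zero, branched over two non-real points'' is sound in outline: using Theorem~\ref{thm real family} to rule out real branch points is a nice shortcut past the paper's Lemma~\ref{quadratic monodromy}, and the quadric-surface picture you describe is essentially the paper's Lemma~\ref{messy} in disguise (the four nonvanishing Pl\"ucker coordinates $p_L, p_{K_1}, p_{K_2}, p_M$ subject to one Pl\"ucker relation cut out exactly your quadric $Q$, and the degree-$2$ discriminant you defer to is the discriminant of the explicit conic $(j-i-1)u^2-(j-i)u-(j-i)\tau u+(j-i+1)\tau=0$). So the first half of the theorem is within reach of your plan.

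The genuine gap is the circular ordering, which is the main content of the theorem, and your ``direct Pieri bookkeeping'' cannot supply it. First, it rests on a misreading: the standing hypothesis is that $\mu^C/\lambda$ consists of two \emph{non-adjacent} boxes, so it is simultaneously a horizontal and a vertical strip and there is no ``horizontal-domino degeneration'' of it; both $\kappa_1$ and $\kappa_2$ are equally valid intermediates and nothing in the Pieri rule pairs one of them with $(2)$ and the other with $(1,1)$. More fundamentally, local constancy of labels only tells you that each sheet over an open arc of $\Mbar{4}(\RR)$ has a well-defined label at each endpoint; \emph{which} label at one boundary continues to \emph{which} label at the adjacent boundary is precisely the monodromy you are trying to compute, and it is not determined by the fiber data alone (the configuration in which the two $\kappa$-labels are preserved rather than swapped along the arc from $\infty$ to $0$ is combinatorially just as consistent). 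The paper resolves this with actual geometry: the explicit parametrization by the conic, plus the observation that the $u$-coordinate is monotone along the real branch, which linearly orders the six boundary points; and even with explicit coordinates in hand, the paper admits that distinguishing which of positions~(3) and~(6) carries $(1,1)$ versus $(2)$ requires a further messy computation, deferred to the jeu-de-taquin argument of Lemma~\ref{jdt domino}. You need to carry out your explicit $\PP^1\times\PP^1$ computation far enough to actually track the two sheets across each of the three real arcs; without that, the stated order $(\kappa_1,\kappa_2,(1,1),\kappa_2,\kappa_1,(2))$ is asserted rather than proved.
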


This result should be compared with Purbhoo's~\cite[Theorem 2.3]{Purb2}.

\begin{remark} \label{wimping out}
We do not actually use the knowledge of which of~(3) and~(6) is which. 
At the end of Section~\ref{sec:growth}, we will be able to give a more conceptual proof that~(3) and~(6) are positioned as claimed.
However, determining this is a straightforward though messy computation, given the other computations in this section, and it seemed silly not to include the answer.
Because we do not need the answer, we will get the reader started on this computation and leave the last steps to him or her.
\end{remark}

\begin{lemma} \label{quadratic monodromy}
Consider the conic $K$ given by $a u^2 - b u - c \tau u + d \tau =0$ with positive real coefficients $a$, $b$, $c$, $d$. If $bc> ad$, then the projective completion of $K(\RR)$ projects to the real $\tau$-line in an unbranched $2$ to $1$ cover.
\end{lemma}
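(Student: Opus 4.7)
The plan is to view the equation as a quadratic in $u$ for fixed $\tau$, show its discriminant never vanishes on $\RR$, and then handle the point $\tau = \infty$ by homogenizing. Specifically, rewrite the defining equation as
\[
a u^2 - (b + c \tau) u + d \tau = 0,
\]
so that for each $\tau \in \RR$ the fiber of the projection consists of the roots of this quadratic in $u$. Its discriminant is
\[
D(\tau) = (b + c\tau)^2 - 4 a d \tau = c^2 \tau^2 + 2 (bc - 2 a d) \tau + b^2,
\]
a quadratic in $\tau$ whose own discriminant equals $4\bigl[(bc - 2 a d)^2 - b^2 c^2\bigr] = -16 a d (bc - a d)$. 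The hypotheses $a, d > 0$ and $bc > ad$ make this quantity strictly negative, and since the leading coefficient $c^2$ of $D(\tau)$ is positive, $D(\tau) > 0$ for every real $\tau$. Thus over every finite real $\tau$ the fiber is two distinct real points.

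Next I would handle $\tau = \infty$ by projectivizing. Homogenize the equation to $a u^2 - b u w - c u \tau + d \tau w = 0$ in $\PP^2$ with coordinates $[u:\tau:w]$, where the projection to the $\tau$-line becomes $[u:\tau:w] \mapsto [\tau : w]$. Setting $w = 0$ gives $u(a u - c \tau) = 0$, so the fiber over $\tau = \infty$ consists of the two distinct real points $[0:1:0]$ and $[c:a:0]$.

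To justify that these fiber counts really describe an unbranched double cover of $S^1$, I would briefly verify that the projective conic is smooth: a direct calculation of the three partial derivatives of the homogeneous cubic forces $w = 0$ (using $bc \neq a d$) and then $u = \tau = 0$, which is not a projective point. A smooth real conic with a real point is isomorphic to $\PP^1(\RR) \cong S^1$, so the projective completion of $K(\RR)$ is a single circle, and the degree-$2$ projection to the real $\tau$-line is unbranched since every real fiber (including the one at infinity) has cardinality $2$.

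No step here is really an obstacle; the only piece of content is the discriminant estimate, where the condition $bc > ad$ enters precisely to guarantee $D(\tau) > 0$ on all of $\RR$. Everything else is a routine verification at infinity and a smoothness check on the projective conic.
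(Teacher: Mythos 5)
Your proof is correct, but it takes a different route from the paper's. You argue fiberwise: you compute the discriminant of the equation as a quadratic in $u$, namely $D(\tau) = c^2\tau^2 + 2(bc-2ad)\tau + b^2$, show its own discriminant $-16ad(bc-ad)$ is negative under the hypotheses so that $D(\tau)>0$ for all real $\tau$, and then check the fiber over $\tau=\infty$ and the smoothness of the projective conic by hand. The paper instead works with the symmetric $3\times 3$ matrix of the projectivized quadratic form: its determinant is $\tfrac{d}{4}(bc-ad)>0$ and it has a zero diagonal entry, so the signature is $+\,-\,-$; the center of the vertical projection, $(0:1:0)$, lies in the interior of the conic (the form evaluates to $a>0$ there), and projection from an interior point of a real conic is automatically an unbranched double cover. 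The paper's argument is slicker and makes the geometric reason transparent in one stroke, while yours is more elementary and makes the role of $bc>ad$ visible as a concrete positivity of the discriminant; both hinge on exactly the same inequality. One cosmetic slip: the homogenization $au^2 - buw - cu\tau + d\tau w$ is a quadratic form, not a \emph{cubic}; your partial-derivative smoothness check is otherwise fine (and is equivalent to the paper's observation that the determinant is nonzero).
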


\begin{proof}
We complete the $(\tau, u)$ plane to $\PP^2$, with coordinates $(\tau:u:1)$. 
The given conic corresponds to the symmetric form with matrix
$$\begin{pmatrix} 0 & -c/2 & d/2 \\ -c/2 & a & -b/2 \\ d/2 & -b/2 & 0 \end{pmatrix}$$
Since the determinant of this matrix is $-d/4(ad-bc) > 0$, and it has a $0$ on the diagonal, it must have signature $+\! -\! -$. 
Thus, the interior of $K(\RR)$ is the region where the quadratic is positive. In particular, the point with homogenous coordinates $(0:1:0)$ is within this interior.
Projection onto the $\tau$ line is projection from this point, so the mapping is unbranched.
\end{proof}

Figure~\ref{hyperbola} shows the conic $u^2 -2u - 2 \tau u + 3 \tau=0$, which we discuss in Example~\ref{g24 example}. (The asymptotes  and marked points of the figure will be explained as they become relevant.) The map to the $\tau$ line is vertical projection.

\begin{figure}
\centerline{\includegraphics{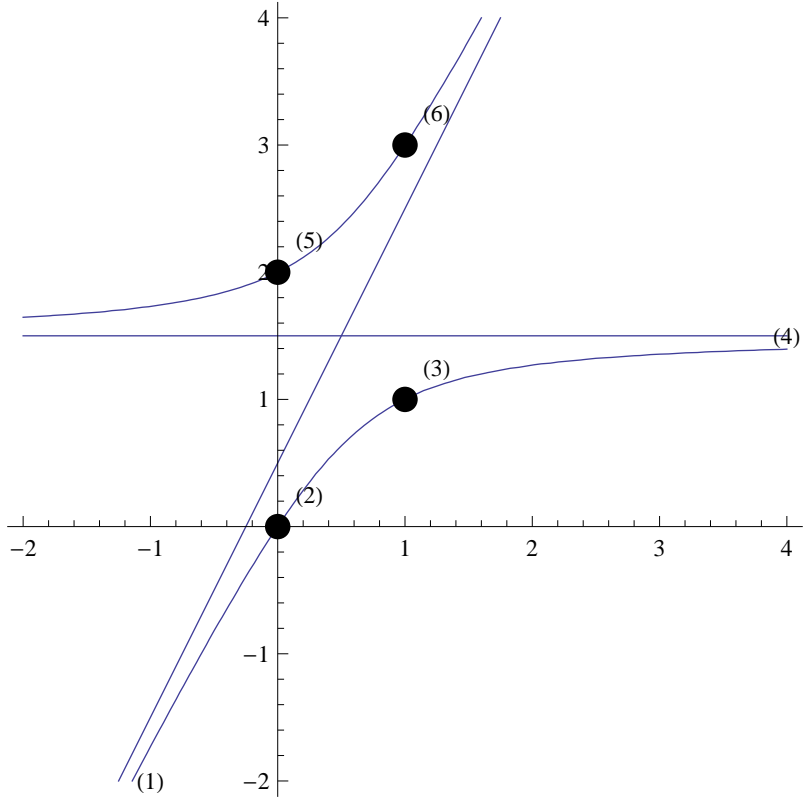}}
\caption{The conic $u^2 -2u - 2 \tau u + 3 \tau=0$ with its asymptotes and the $6$ points of Theorem~\ref{basic monodromy}} \label{hyperbola}
\end{figure}

Recall the notations $I(\lambda)$ and $\Delta(S)$ from Proposition~\ref{wronski equation} and the paragraphs preceding it.
%We introduce also the notation $\Delta(S, a)$ for $\prod_{s \in S} |a-s|$.
Let $L = I(\lambda)$ and $M = I(\mu^{C})$. Then $L$ and $M$ are of the form $Sij$ and $S(i+1)(j+1)$ for some $S \in \binom{[n]}{d-2}$ and  some $i$, $j$; we have $i<i+1<j<j+1$ and all four of these indices are in $[n] \setminus S$. We have $I(\kappa_1) = S(i+1)j$ and $I(\kappa_2) = Si(j+1)$. Abbreviate these to $K_1$ and $K_2$.

\begin{lemma} \label{messy}
Let $\tau \in \CC \PP^1 \setminus \{ 0, 1, \infty \}$. Consider the Shapiro-Shapiro problem $(\lambda, \mu, \square, \square)$, with respect to the points $(0, \infty, 1, \tau)$. 
The only nonzero solutions of the Pl\"ucker coordinates are $p_{L}$, $p_{K_1}$, $p_{K_2}$ and $p_{M}$. Those values are
\[ \begin{array}{r r @{\cdot} l}
p_L =& \Delta(L)^{-1} & 1\\
p_{K_1} =& \frac{j-i-1}{j-i} \Delta(K_1)^{-1} & u\\
p_{K_2} =& \frac{j-i+1}{j-i}  \Delta(K_2)^{-1}  & (\tau u^{-1}) \\
p_{M} =& \Delta(M)^{-1} & \tau
\end{array} \]
where $(\tau,u)$ lie on the conic
\[ (j-i-1) u^2 - (j-i) u - (j-i) \tau u + (j-i+1) \tau =0 \]
\end{lemma}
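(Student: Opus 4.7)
The plan is to proceed in three steps. First, I will use the Schubert conditions at $z = 0$ and $z = \infty$ to cut $V$ down to a Richardson variety on which only four Pl\"ucker coordinates $p_L$, $p_{K_1}$, $p_{K_2}$, $p_M$ can be nonzero. Second, I will impose the Wronski equation of Proposition~\ref{wronski equation} at $z = 1$ and $z = \tau$ to obtain two linear relations among these four coordinates. Third, I will apply a three-term Pl\"ucker relation, noting that two of its six terms vanish by Step~1, to pin down one more quadratic relation. Combining everything yields the claimed parametrization and the conic.

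For Step~1, the intersection $\Omega(\lambda, 0) \cap \Omega(\mu, \infty)$ is a Richardson variety of dimension $d(n-d) - |\lambda| - |\mu| = 2$, and the vanishing of individual Pl\"ucker coordinates follows directly from the rank conditions $\dim(V \cap F_i) \geq k_i$ defining each Schubert variety: one verifies that $p_J = 0$ whenever $J$ is not in the componentwise interval from $L$ to $M$. Because $L = Sij$ and $M = S(i+1)(j+1)$ differ only by the two independent increments $i \mapsto i+1$ and $j \mapsto j+1$, this interval consists of precisely the four subsets $\{L, K_1, K_2, M\}$. Getting this vanishing statement exactly right is the main obstacle; once it is in hand, the remainder is essentially bookkeeping.

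For Step~2, abbreviate $a = \Delta(L) p_L$, $b = \Delta(K_1) p_{K_1}$, $c = \Delta(K_2) p_{K_2}$, $e = \Delta(M) p_M$. Using $\ell(K_1) = \ell(K_2) = \ell(L)+1$ and $\ell(M) = \ell(L)+2$, the Wronski sum collapses after pulling out a common factor of $(-z)^{d(n-d)-\ell(M)}$ to the quadratic $a z^2 - (b+c) z + e = 0$. Evaluating at $z = 1$ gives $a + e = b + c$, and evaluating at $z = \tau$ then factors as $(\tau-1)(a\tau - e) = 0$, so for $\tau \neq 1$ I obtain $e = a\tau$ and $b + c = a(1 + \tau)$.

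For Step~3, I will apply the three-term Pl\"ucker relation over the common subset $S$ and the four distinct indices $i, i+1, j, j+1$. Besides the four kept coordinates, this relation also involves $p_{Si(i+1)}$ and $p_{Sj(j+1)}$; the first lies strictly below $L$ componentwise (since $i+1 < j$) and the second strictly above $M$, so both vanish by Step~1, and the relation collapses to $p_L p_M = p_{K_1} p_{K_2}$. Computing the ratio $\Delta(L)\Delta(M)/(\Delta(K_1)\Delta(K_2))$ pair by pair, the factors involving any $s \in S$ cancel between numerator and denominator, and the surviving pairs from $\{i, i+1, j, j+1\}$ give $(j-i)^2/((j-i-1)(j-i+1))$. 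Normalizing $a = 1$ (so $e = \tau$), the product relation becomes $bc = (j-i-1)(j-i+1)\tau/(j-i)^2$. Writing $b = \frac{j-i-1}{j-i}\, u$ forces $c = \frac{j-i+1}{j-i}\, \tau/u$, matching the claimed values for $p_{K_1}$ and $p_{K_2}$; substituting into $b + c = 1 + \tau$ and clearing denominators then produces exactly the conic $(j-i-1) u^2 - (j-i) u - (j-i)\tau u + (j-i+1)\tau = 0$.
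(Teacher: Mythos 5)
Your proposal is correct and follows essentially the same route as the paper: vanishing of all Pl\"ucker coordinates outside the Gale interval $\{L,K_1,K_2,M\}$, the Wronski quadratic with roots at $1$ and $\tau$, the three-term Pl\"ucker relation $p_Lp_M=p_{K_1}p_{K_2}$, the cancellation in $\Delta(L)\Delta(M)/(\Delta(K_1)\Delta(K_2))=(j-i)^2/((j-i-1)(j-i+1))$, and the substitution defining $u$. Your normalization $(\Delta(L)p_L,\Delta(M)p_M)=(1,\tau)$ in fact matches the lemma's stated values more literally than the paper's own proof does, and all the computations check out.
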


\begin{example} \label{g24 example}
Consider the Schubert condition $\cS(\square, \square, \square, \square)$ in $G(2,4)$. 
Its solutions are given by
$$(p_{12}, p_{13}, p_{23}, p_{14}, p_{24}, p_{34}) = (0, \tau/2, u/2, \tau/(2u), 1/2, 0)$$
where $\tau$ and $u$ obey
$$u^2 - 2u - 2 \tau u + 3 \tau =0.$$
Observe that, considered as a quadratic in $u$, the discriminant of this equation is $4 (\tau^2-\tau+1)$, so there are two distinct roots of $u$ at all values of $\tau$ other than $(1\pm \sqrt{-3})/2$.
\end{example}

\begin{proof}
From the Pl\"ucker conditions at $0$ and $\infty$, we see that the only nonzero Pl\"ucker coordinates are $p_{L}$, $p_{K_1}$, $p_{K_2}$ and $p_{M}$. They obey the relation 
\begin{equation}
p_L p_M = p_{K_1} p_{K_2}, \label{P1}
\end{equation}
because there is a three term Pl\"ucker relation whose third term is $p_{Si(i+1)} p_{Sj(j+1)}$, and the variables in this last term are zero.

From proposition~\ref{wronski equation}, the quadratic
\[
p_{L} \Delta(L)  - (p_{K_1} \Delta(K_1) + p_{K_2} \Delta(K_2)) z + p_{M} \Delta(M) z^2
\]
has roots at $1$ and $\tau$. By rescaling the Pl\"ucker coordinates, we may assume that
\[
(p_{L} \Delta(L), p_{K_1} \Delta(K_1) + p_{K_2} \Delta(K_2), p_{M} \Delta(M)) = (\tau , 1+\tau , 1)
\]
Plugging into equation~(\ref{P1}), we have
\[ p_{K_1} p_{K_2} = \frac{\tau}{\Delta(L) \Delta(M)}. \]

We introduce the notation $\Delta(S, a)$ for $\prod_{s \in S} |a-s|$. So 
\[ \begin{array}{r r @{\cdot} l @{\cdot} l @{\cdot} l}
\Delta(L) = & (j-i) & \Delta(S) & \Delta(S, i) & \Delta(S, j)  \\
 \Delta(K_1) = & (j-i-1) & \Delta(S) & \Delta(S, i+1) & \Delta(S, j) \\
\Delta(K_2) = & (j-i+1) & \Delta(S) & \Delta(S, i) & \Delta(S, j+1)  \\
 \Delta(M) = &  (j-i) & \Delta(S) & \Delta(S, i+1) & \Delta(S, j+1)\\
\end{array} \]
and we have
\[ p_{K_1} p_{K_2} = \frac{\tau}{\Delta(S)^2 \Delta(S,i) \Delta(S,i+1) \Delta(S,j) \Delta(S, j+1) (j-i)^2} \]

So there is some $u$ with
\[ \begin{array}{r r @{\cdot} l l}
u =& \Delta(S) \Delta(S, i+1) \Delta(S,j) (j-i) & p_{K_1} =& \frac{j-i}{j-i-1} \Delta(K_1) p_{K_1} \\
\tau u^{-1} =&  \Delta(S) \Delta(S,i) \Delta(S, j+1) (j-i) & p_{K_2} =& \frac{j-i}{j-i+1} \Delta(K_2) p_{K_2}
\end{array} \]

We thus have
\[ p_{K_1} \Delta(K_1) + p_{K_2} \Delta(K_2) = \frac{j-i-1}{j-i} u + \frac{j-i+1}{j-i} \tau u^{-1} = 1 +\tau \]
or
\[ (j-i-1) u^2 - (j-i) u - (j-i) \tau u + (j-i+1) \tau =0. \]
All steps are trivially reversible.
\end{proof}

\begin{proof}[Proof of Theorem~\ref{basic monodromy}]
From Lemma~\ref{messy} we see that, for $\tau \in \Mzero{4}$, the variety $\cS(\lambda, \mu, \square, \square)$ is parametrized by the conic 
\[ (j-i-1) u^2 - (j-i) u - (j-i) \tau u + (j-i+1) \tau =0. \]
We denote the projective closure of this curve by $C$.
This is a hyperbola with a two asymptotes: A slant asymptote at $u = (j-1)/(j-i-1) \tau + 1/(j-i)(j-i-1)$, and a horizontal asymptote at $u=(j-i)/(j-i+1)$.
As it is a conic, $C$ has genus $0$. Moreover, since $(j-i-1)(j-i+1) = (j-i)^2-1 < (j-i)^2$, by Lemma~\ref{quadratic monodromy}, $C(\RR)$ is an unbranched double cover of $\Mzero{4}(\RR)$.

Since $\cS(\lambda, \mu, \square, \square)$ is closed in $\cG(d,n)$, and $C$ is normal, the map over $\Mzero{4}$ extends to a map $C \to \cS(\lambda, \mu, \square, \square)$. Since $\cS(\lambda, \mu, \square, \square)$ is flat over $\Mbar{4}$, the whole family $\cS(\lambda, \mu, \square, \square)$ is the closure of the part of the family over $\Mzero{4}$, 
and so the map  $C \to \cS(\lambda, \mu, \square, \square)$ is surjective. 
We now compute exactly what this map does over $0$, $1$ and $\infty$.

The points on $C$ over $\tau=\infty$ are the endpoints of the slant asymptote and the horizontal asymptote. 
The points on $C$ over $\tau=0$ are $(0,0)$ and $(0, (j-i)/(j-i-1))$; the points over $\tau = 1$ are $(1, 1)$ and $(1,(j-i+1)/(j-i-1))$. 
As we travel along $C(\RR)$ in the direction of increasing $\tau$, starting at the end point of the slant asymptote, the $u$ coordinate is continually increasing, which forces these points to be ordered as follows: The end of the slant asymptote, $(0,0)$, $(1,1)$, the end of the horizontal asymptote, $(0,(j-i)/(j-i-1))$, $(1,(j-i+1)/(j-i-1))$. 

These points are marked in Figure~\ref{hyperbola}. Points $(2)$, $(3)$, $(5)$ and $(6)$ are in the figure, and marked with solid dots; the points $(1)$ and $(4)$ are at the end of the asymptotes, out of the frame.

Plugging into Lemma~\ref{messy}, the corresponding Pl\"ucker coordinates are 
\[ \begin{array}{r@{} c@{\colon} c@{\colon} c@{\colon} c@{\colon} c@{\colon} c @{} l}
(& p_{12} & p_{13} & p_{23} & p_{14} & p_{24} & p_{34} &) = \\
(& 0 & 0 &  \ast & 0 & \ast & 0 &)\\
(& 0 & \ast & 0 & \ast & 0 & 0 &)\\
(& 0 & \Delta(L)^{-1} &  \frac{j-i-1}{j-i} \Delta(K_1)^{-1} & \frac{j-i+1}{j-i}  \Delta(K_2)^{-1}  & \Delta(M)^{-1} &0 &)\\
(& 0 &0  &0  & \ast & \ast & 0 &)\\
(& 0 & \ast & \ast & 0 & 0 & 0 &)\\
(& 0 &  \Delta(L)^{-1}  &  \frac{j-i+1}{j-i} \Delta(K_1)^{-1} &  \frac{j-i-1}{j-i}  \Delta(K_2)^{-1} &  \Delta(M)^{-1} & 0 &)\\
\end{array} \]
Here the $\ast$'s indicate nonzero numbers whose values will be irrelevant to the argument. 

Note that these are the Pl\"ucker coordinates on $G(d,n)_T$, where $T$ indexes the subset $(0,1, \infty)$ of $(0,\infty, 1, \tau)$. 
In particular the projection from $\cS(\lambda, \mu, \square, \square)$  to this Grassmannian does not identify the two points over any $\tau$, so the parametrization $C \to \cS(\lambda, \mu, \square, \square)$ is bijective. A bit more of a computation checks that the composite $C \to G(d,n)_T$ is also smooth (as opposed to introducing a cusp), so $C \to \cS(\lambda, \mu, \square, \square)$ is an isomorphism and $\cS(\lambda, \mu, \square, \square) \to G(d,n)_T$ is a closed embedding.

We now must check that the ordering above corresponds to the ordering listed in Theorem~\ref{basic monodromy}. 
The terms where $p_{K_1}$ (respectively $p_{K_2})$ vanish correspond to $\kappa_2$ (respectively $\kappa_1$) respectively.
The remaining computation is that $(0 : \Delta(L)^{-1} :  \frac{j-i-1}{j-i} \Delta(K_1)^{-1} :\frac{j-i+1}{j-i}  \Delta(K_2)^{-1}  : \Delta(M)^{-1} :0)$ corresponds to the partition $(1,1)$ and $(0 : \Delta(L)^{-1} :  \frac{j-i+1}{j-i} \Delta(K_1)^{-1} :\frac{j-i-1}{j-i}  \Delta(K_2)^{-1}  : \Delta(M)^{-1} :0)$ corresponds to the partition $(2)$. 
I have not found a slick way to finish this computation and therefore leave it to the reader. As discussed in Remark~\ref{wimping out}, we will see a cleaner proof later.
\end{proof}

\begin{remark}
Although $\cS(\lambda, \mu, \square, \square)$ turned out to embed into $G(d,n)_T$, one should note that simply writing down the relevant Schubert conditions in $G(d,n)_T \times \Mbar{4}$ does not give the correct variety over $0$, $1$ and $\infty$. For example, over $\tau=1$, simply imposing the condition $\square$ twice just writes down same condition twice: namely, that
$$p_{L} \Delta(L) -(p_{K_1} \Delta(K_1) + p_{K_2} \Delta(K_2)) z + p_{M} \Delta(M) z^2 $$
vanish at $z=1$. It is only by taking the closure of what happens at $\tau \neq 1$ that we obtain the stronger condition that $1$ be a double root of this polynomial.
\end{remark}

\subsection{An example in $\Fl(6)$}
We give an example in $\Fl(6)$ to demonstrate which statements generalize to $\Fl(n)$ and which do not.
We assume some familiarity with the cohomology ring of $\Fl(n)$.
We impose the Schubert class $153264$ with respect to $0$ and the Schubert class $514623$ with respect to $\infty$. 
This means that the components of the flag in $\Fl(6)$ are the row spans of the top justified $k \times 6$ submatrices of a matrix of the form
$$\begin{pmatrix}
\ast & \ast & 0 & 0 & 0 & 0 \\
0 & 0 & 0 & 0 & \ast & \ast \\
0 & 0 & \ast & 0 & 0 & 0 \\
\ast & \ast & 0 & 0 & 0 & 0 \\
0 & 0 & 0 & 0 & \ast & \ast \\
0 & 0 & 0 & \ast & 0 & 0 \\
\end{pmatrix}$$
The intersection of these Schubert varieties is isomorphic to $\PP^1 \times \PP^1$, and we will coordinatize it so that the top rows of the above matrix are 
$$\begin{pmatrix} 1 & u & 0 & 0 & 0 & 0 \\ 0 & 0 & 0 & 0 & 1 & v \end{pmatrix}.$$

Let $\square_d$ be the pull back to $H^2(\Fl(n))$ of the class $\square$ in $G(d,n)$.
We impose $\square_2$ at $1$ and $\square_3$ at $\tau$.
This gives
\[ \begin{array}{r l l l l}
4 & - 3 u & - 5v & +4 uv & =0 \\
16 & - 6 \tau u & - 30  \tau v & + 12 \tau^2 uv & =0 \\
\end{array} \]
Then $\tau$ and $v$ are related by 
$$(-12 +6 \tau) + (16  15  \tau-12 \tau^2) v + (- 30 \tau + 15 \tau^2)  v^2 =0$$
and $\tau$ and $u$ are related by a similar equation, giving a birational curve.

This curve has genus $1$, branched over the four roots of
$$256 - 960 \tau + 1281 \tau^2 - 720 \tau^3 + 144 \tau^4 =0.$$
These roots are at $0.678121$, $0.945553$, $1.41011$ and $1.96622$, so the analogue of the Shapiro-Shapiro conjecture fails.

In general, intersections over $\Fl(n)$ of the form $x \cdot y \cdot \square_d \cdot \square_e$ have cardinality $0$, $1$ or $2$. 
In the case where the cardinality is $2$, we have $yw_0 =(i j) (k \ell) x$ for some $i < j < k < \ell$. In our above example, $(i,j,k,\ell) = (1,2,5,6)$.
Computations as above gives a hyperelliptic curve branched over $2 (-i+j-k+\ell)$ points, and hence of arithmetic genus $-i+j-k+\ell-1$.
In some cases, this curve has a node at $\tau=1$ which is resolved in $\cS(x,y,\square_d, \square_e)$. In particular, this happens whenever $d=e$.
There are also examples where $\cS(x,y,\square_d, \square_e)$ actually has nodes.
As in the above example, the branch points can be real. However, if at least one of $j-i$ and $\ell-k$ is odd, then there is no branching over $(-\infty, -0)$.

\section{Real structure of $\Mbar{r}$} \label{sec:CW}

We review quickly the structure of $\Mbar{r}(\RR)$. An excellent introduction to this material is~\cite{Devad}.
Let $r \geq 3$.

$\Mbar{r}(\RR)$ is a connected compact manifold of dimension $r-3$. 
It has a natural regular $CW$ structure. The maximal faces are indexed by the $(r-1)!/2$ dihedral symmetry classes of ways to place the labels $[r]$ around a circle.
For such a circular ordering, the points in the interior of the corresponding face correspond to configurations of points on $\RR \PP^1$ with that circular order.

Each maximal face is isomorphic to the (simple) associahedron.
We should think of faces of the associahedron as indexing triangulations of the $r$-gon, where the \emph{edges} of the $r$-gon are labeled by the elements of $[r]$ in the corresponding circular order. 
Given a stable curve $C$, the nodes of $C$ correspond to the chords of the corresponding triangulation. If a chord separates the edges of the $[r]$-gon labeled by $A$ and the edges of the $[r]$-gon labeled by $[r] \setminus A$, then the corresponding node separates the points marked by $A$ and $[r] \setminus A$. 

When we cross a wall of the $CW$-complex, corresponding to single chord drawn in the $r$-gon, the circular orders on the two sides of the wall differ by reversing the ordering on one side of the chord.

Figure~\ref{M05} shows a portion of the CW structure on $\Mbar{5}(\RR)$.
The figure depicts $4$ maximal (two dimensional) faces, whose boundaries are dashed, surrounding a vertex.
We draw the topologies of the stable curves corresponding to the four maximal faces, to one of the edges and to the central vertex.

\begin{figure}
\centerline{\scalebox{0.7}{\includegraphics{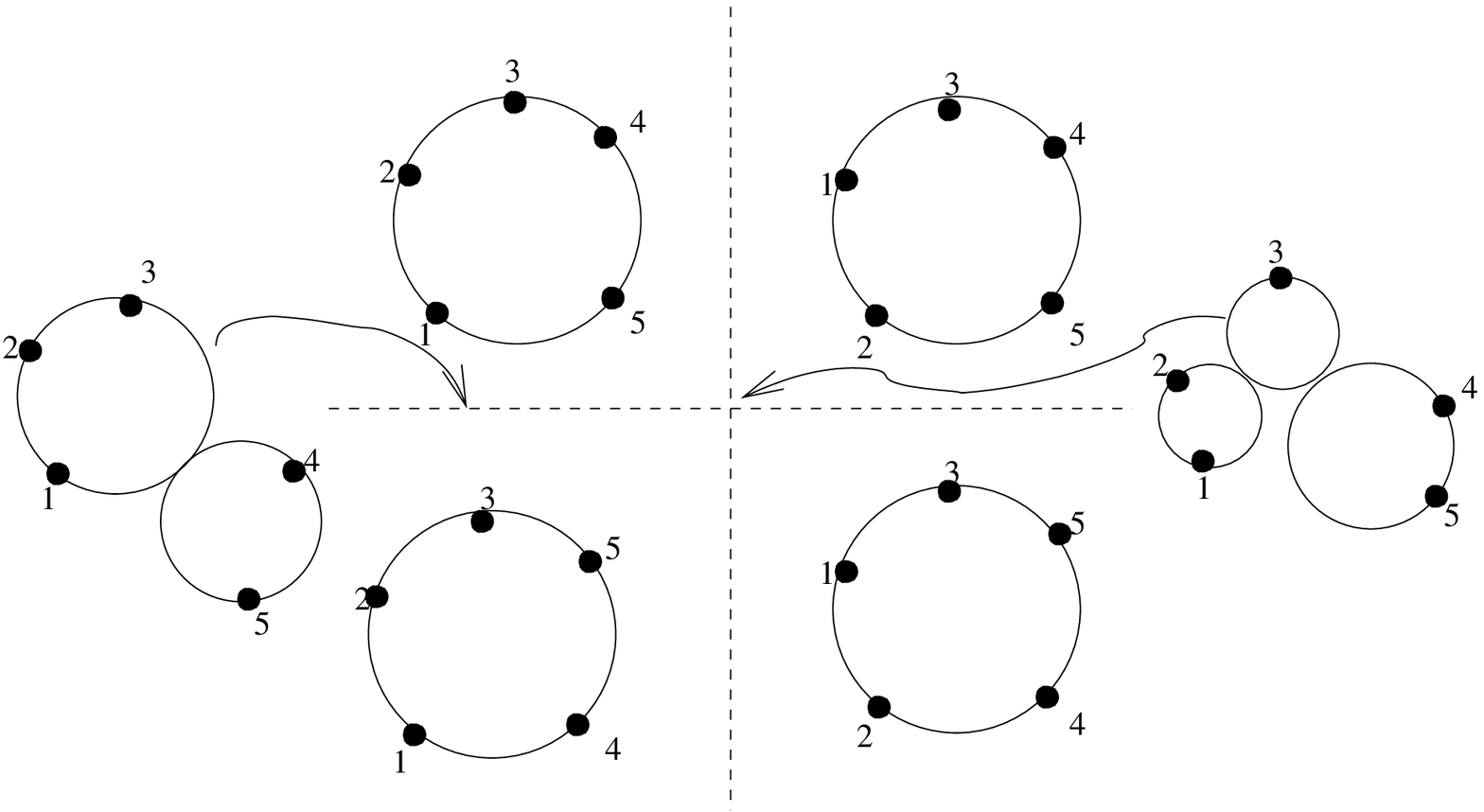}}}
\caption{A portion of $\Mbar{5}(\RR)$} \label{M05}
\end{figure}

\section{Cylindrical growth diagrams} \label{sec:growth}

Let $r=d(n-d)$ and suppose that all the partitions $(\lambda_1, \lambda_2, \cdots, \lambda_{r})$ are $\square$. 
In this section, abbreviate $\cS(\square, \square, \cdots, \square)$ to $\cS$. 
From Theorem~\ref{thm real family}, $\cS(\RR)$ is a covering of $\Mbar{r}(\RR)$, so we can pull back the regular $CW$-structure on $\Mbar{r}(\RR)$ to a regular $CW$-structure on $\cS(\RR)$.
Our aim in this section is to describe that structure.

Fix a circular order $( s(1), s(2), \cdots, s(d(n-d)))$ of $[r]$. We will consider the preimage in $\cS(\RR)$ of the corresponding face of $\Mbar{r}(\RR)$.
We make the definition that $[i,j]$ means the set $(s(i), s(i+1), \ldots, s(j-1), s(j))$, wrapping around modulo $r$ if necessary. We similarly use $[i,j)$, $(i,j]$ and $(i,j)$ to exclude one or both of the endpoints of this set.

Each maximal face of $\Mbar{r}$ is isomorphic to the associahedron, and its preimage in $\cS$ is the disjoint union of many\footnote{By the hook length formula~ \cite[Corollary 7.21.6]{EC2}, the degree of the cover is $r! / (d^{\underline{d}} (d+1)^{\underline{d}} \cdots (n-1)^{\underline{d}})$ where $x^{\underline{d}} = x(x-1)(x-2) \cdots (x-d+1)$.} copies of the associahedron. 
Let's fix one copy; call it $\sigma$.

Let $i$ and $j$ be distinct and circularly non-adjacent elements of $[r]$. 
Let $\sigma_{ij}$ be the facet of $\sigma$ corresponding to stable curves with two components, one containing $[i,j)$ and one containing $[j,i)$.
Consider any point of $\sigma$ on $\sigma_{ij}$.
There is a corresponding node of the stable curve. 
To each side of this node, the function $\nu$ assigns a partition. Let $\gamma_{ij}$ be the partition assigned to the side of the node containing $[j,i)$. 
(We emphasize: The side \emph{away} from $[i,j)$.)
So we have $\gamma_{ij} = \gamma^{C}_{ji}$.

\begin{lemma} \label{constant label}
The partition $\gamma_{ij}$ is independent of the choice of point on $\sigma_{ij}$.
\end{lemma}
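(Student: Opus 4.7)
The plan is to show that $\gamma_{ij}$ is locally constant on $\sigma_{ij}$; since $\sigma_{ij}$ is a cell in the pulled-back $CW$-decomposition of $\cS(\RR)$, it is connected, and local constancy will give the lemma.

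First I would restrict to the relative interior $\sigma_{ij}^{\circ} \subset \sigma_{ij}$, where every stable curve has a unique node, namely the one separating $[i,j)$ from $[j,i)$. Pick a trivalent tree $\Gamma$ with an internal edge $e^{*}$ separating the leaves $[i,j)$ from $[j,i)$ such that $\sigma_{ij}^{\circ} \subset U(\Gamma)$; let $v_1$, $v_2$ be the endpoints of $e^{*}$, with $v_1$ on the $[j,i)$-side. By Theorem~\ref{thm fibers} together with Proposition~\ref{dimension equality}, for each $u \in \sigma_{ij}^{\circ}$ the fiber $\cS|_u$ is a reduced zero-dimensional union whose components are indexed by choices of $\nu \in \Lambda$: a real point lies in the $\nu$-component precisely when its Schubert profile at $v_1$ is $\nu$, in which case its profile at $v_2$ is $\nu^{C}$ by the complementation along the node. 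So $\gamma_{ij}(x)$ is exactly this profile.

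For each $\nu \in \Lambda$, the subscheme
\[ \cT_{\nu} := \cS \cap \Omega(\nu, v_1, e^{*}) \cap \Omega(\nu^{C}, v_2, e^{*}) \]
is closed in $\cS|_{U(\Gamma)}$, being cut out by closed Schubert conditions on the ambient $\prod_{v} G(d,n)_v$. The combination of the two conditions forces the profile at $v_1$ to contain $\nu$ and, via the complementation $\nu^{C} \subseteq \nu_x(v_2,e^{*})^{C} = \nu_x(v_1,e^{*})$ read from the other side, to be contained in $\nu$; hence a real point lies in $\cT_{\nu}(\RR)$ if and only if its profile at $v_1$ is exactly $\nu$. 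The sets $\cT_{\nu}(\RR)$ therefore genuinely partition the fibers of the unbranched covering $\cS(\RR)|_{\sigma_{ij}^{\circ}} \to \sigma_{ij}^{\circ}$ given by Theorem~\ref{thm real family}. Since these pieces are closed and disjoint on fibers of a covering with connected base components, each sheet of the covering lies entirely in some $\cT_{\nu}(\RR)$. In particular, $\gamma_{ij}$ is constant on the sheet $\sigma_{ij} \cap \sigma_{ij}^{\circ}$.

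To pass from the open cell to the closed cell $\sigma_{ij}$, take $x_0 \in \sigma_{ij}$ lying over a more-degenerate curve and approach it by $x_n \in \sigma_{ij} \cap \sigma_{ij}^{\circ}$ with common label $\nu_0$. Each $x_n$ lies in the closed set $\cT_{\nu_0}$, so $x_0 \in \cT_{\nu_0}$ as well. At $u_0$ the fiber of $\cS$ decomposes into components indexed by labelings of all nodes of the degenerate curve, but the complementation relation applied to the specific node separating $[i,j)$ from $[j,i)$ — which still exists at $u_0$ — forces the label at that node to be exactly $\nu_0$, so $\gamma_{ij}(x_0) = \nu_0$. The main obstacle throughout is the pincer step: verifying that the two one-sided Schubert conditions forming $\cT_{\nu}$ combine, via the complementation relation across $e^{*}$, to pin down the partition $\nu$ exactly, rather than just up to containment. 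This rests on the fact that each real fiber of $\cS$ is a reduced union of real points, so profiles on the two sides of a node are genuinely complementary and not merely compatible.
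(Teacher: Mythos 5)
Your proof is correct, but it runs along a genuinely different track from the paper's. The paper connects two arbitrary points $x,z$ of $\sigma_{ij}$ by a two-step path through an intermediate curve $Y$ obtained by gluing the $[i,j)$-half of $X$ to the $[j,i)$-half of $Z$; on each step one side of the node is literally frozen, so the solution points in the Grassmannians on that side range over a fixed finite set and are constant by continuity, and the node label is read off from the frozen side. You instead show that the level sets of the label are closed, cutting them out by Schubert conditions from both sides of the node and using the pincer $\nu \subseteq \mu$ and $\mu \subseteq (\nu^{C})^{C} = \nu$; finitely many disjoint closed sets covering the connected facet $\sigma_{ij}$ are then open, giving constancy in one stroke. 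Your route requires the extra verification that the two one-sided conditions pin the label exactly --- which, note, ultimately rests on the transversality of Proposition~\ref{Schub trans} (via the codimension count of Proposition~\ref{dimension equality}) rather than on reducedness of the real fibers per se --- but in exchange it avoids the path surgery and uses only connectedness of the cell. Two points worth tightening, neither a gap in substance: (i) you should justify that $\Omega(\nu, v_1, e^{*})$ is a closed \emph{family} over the closed facet and not merely a fiberwise condition; this holds because the node persists over all of $\sigma_{ij}$ and, in the trivialization $G(d,n)_{v_1} \cong G(d,n)_T \times U(\Gamma)$ with one element of $T$ taken from the $[i,j)$-side of $e^{*}$, the node sits at a constant marked point of $\PP^1_T$; (ii) the disjointness of the $\cT_{\nu}(\RR)$ needs the observation that a point lying in $\Omega(\nu, v_1, e^{*}) \cap \Omega(\nu', v_1, e^{*})$ for distinct $\nu, \nu'$ of the same size would lie in a Schubert variety of strictly larger codimension and hence violate the same dimension count.
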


\begin{proof}
Let $x$ and $z$ be two points on $\sigma_{ij}$. Let $X$ and $Z$ be the corresponding stable curves. Let $X_1$ (respectively $Z_1$) be the union of the components of $X$ (respectively $Z$) containing the points labeled by $[i,j)$; let $X_2$ and $Z_2$ contain the points labeled by $[j,i)$.
Define $Y$ to be the stable curve defined by gluing $X_1$ to $Z_2$ at the node of each, and let $y$ be the corresponding point of $\sigma_{ij}$. 

Consider any path in $\Mbar{j-i+1}(\RR)$ linking $X_2$ to $Z_2$, while maintaining the circular ordering of the points $[j,i)$. Glue the curves in this path to $X_1$, giving a curve in $\Mbar{r}(\RR)$. 
Since nothing is changing on the $X_1$ side of the node, the points of the $G(d,n)_v$'s for the vertices $v$ on that side of the node live in the same discrete set of Schubert solutions, everywhere along the path. 
By continuity, these points must simply remain constant as we travel along the path. In particular, the $\nu$-label of the node is the same at both ends of the path.

So $\gamma_{ij}$ is the same at $X$ and  $Y$. Similarly, $\gamma_{ij}$ is the same at $Y$ and  $Z$.
\end{proof}

So we may label each $\sigma$ by an array of partitions $\gamma_{ij}$. 
In this section, we will show that this array determines $\sigma$ and will give a description of which arrays occur. First, some combinatorial preludes.

Right now, our indices $i$ and $j$ live in $[r]$. 
It will be more convenient in the future to take our indexing set to be
$$\II : = \{ (i,j) \in \ZZ^2 : i \leq j \leq i+r \}$$
For $(i,j) \in \II$ with $2 \leq j-i \leq r-2$, we will write $\gamma_{ij}$ to mean the above defined $\gamma_{ij}$, with $i$ and $j$ reduced modulo $r$.
We set $\gamma_{kk} = \emptyset$, $\gamma_{k(k+1)} = \square$, $\gamma_{k(k+r-1)} = \square^{C}$ and $\gamma_{k(k+r)} = \rect$.

Define a \newword{cylindrical growth diagram} to be a map from $\II$ to $\Lambda$ (the set of partitions contained in $d^{n-d}$) obeying
\begin{enumerate}
%\item $|\gamma_{ij}| = j-i$.
\item $\gamma_{(i-1)j}$ and $\gamma_{i(j+1)}$ are obtained by adding a single box to $\gamma_{ij}$, and $\gamma_{ij} = |j-i|$. 
%\item $\gamma_{ij} = \gamma'_{j(i+r)} = \gamma_{(i+r)(j+r)}$.
\item If the two boxes of the skew-shape $\gamma_{(i-1)(j+1)}  / \gamma_{ij}$ are nonadjacent, then $\gamma_{i(j+1)} \neq \gamma_{(i+1)j}$.
\end{enumerate}
%We use the term ``cylindrical'' because quotienting $\II$ by the symmetry $(i,j) = (i+r, j+r)$ from~(3) makes it natural to draw our growth diagrams on a cylinder.
The motivation for the term ``cylindrical'' will be explained after Proposition~\ref{glide reflect}.

Figure~\ref{growth example} shows $r+1=7$ rows of a growth diagram with $(d,n) = (2,5)$.
We omit the commas for brevity, writing $31$ rather than $(3,1)$.
Note that the bottom row is a repetition of the top; this is a general phenomenon that we will discuss in Proposition~\ref{glide reflect}.

Note that condition~(1) forces the boundary conditions $\gamma_{kk} = \emptyset$, $\gamma_{k(k+1)} = \square$, $\gamma_{k(k+r-1)} = \square^{C}$ and $\gamma_{k(k+r)} = \rect$. Condition~(2) is the growth diagram condition from~\cite[Proposition A.1.2.7]{EC2}. 

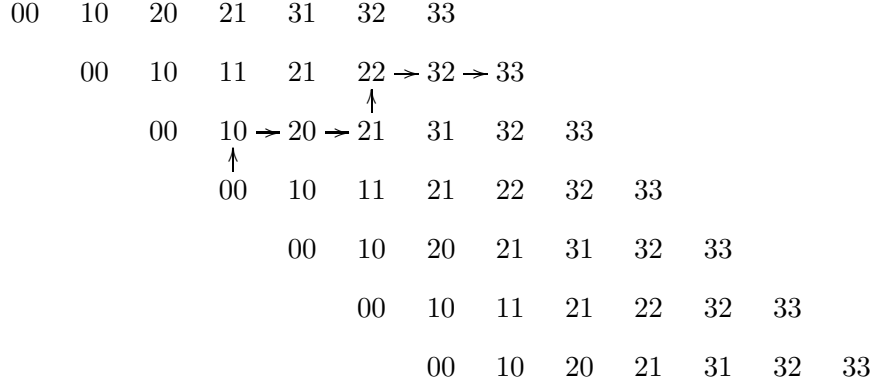
\begin{figure}
$$\xymatrix@=0.8 em{
00 & 10 & 20 & 21  & 31  & 32  & 33 &    &    &    &    &    & \\
   & 00 & 10 & 11  & 21  & 22\ar[r] & 32\ar[r] & 33 &    &    &    &    & \\
   &    & 00 & 10\ar[r] & 20\ar[r] & 21\ar[u] & 31 & 32 & 33 &     &    &    & \\
   &    &    & 00\ar[u] & 10 & 11 & 21 & 22 & 32 & 33 &    & &    \\
   &    &    &    & 00 & 10 & 20 & 21 & 31 & 32 & 33 &    &\\
   &    &    &    &    & 00 & 10 & 11 & 21 & 22 & 32 & 33 & \\
      &    &    &    & & & 00 & 10 & 20 & 21 & 31 & 32 & 33\\
}$$
\caption{A cylindrical growth diagram for $(d,n) = (2,5)$} \label{growth example}
\end{figure}

Define a \newword{path through $\II$} to be a sequence $(i_0, j_0)$, $(i_1, j_1)$, \dots, $(i_r, j_r)$ of elements of $\II$ such that $j_0=i_0$ and, for each $0 \leq k < r$, the difference $(i_{k+1}, j_{k+1}) - (i_k, j_k)$ is either $(-1, 0)$ or $(0,1)$. 
By reading a cylindrical growth diagram along a path through $\II$, we obtain a sequence of partitions, each of which is obtained by adding a single box to the previous one; 
in other words, we obtain a standard Young tableaux of shape $\rect$. In Figure~\ref{SYT example}, we translate the path in Figure~\ref{growth example} into a sequence of partitions and draw it as a Young tableaux in the standard way.
\begin{figure}
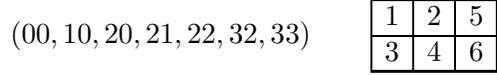

$$(00, 10, 20, 21, 22, 32, 33) \quad \quad \begin{array}{|c | c | c|}
\hline
{\LARGE 1} & {\LARGE 2} & {\LARGE 5} \\
\hline
{\LARGE 3} & {\LARGE 4} & {\LARGE 6} \\
\hline
\end{array}$$
\caption{A sequence of partitions, and its corresponding Young tableaux, obtained from the path in Figure~\ref{growth example}} \label{SYT example}
\end{figure}

\begin{lemma} \label{recursion}
Let $(i_0, j_0)$, $(i_1, j_1)$, \dots, $(i_r, j_r)$ be any path through $\II$, and fix values of $\gamma_{i_k j_k}$ in $\Lambda$ obeying that $\gamma_{i_{k+1} j_{k+1}}$ is obtained by adding a single box to $\gamma_{i_k j_k}$ and $|\gamma_{i_k j_k}| = j_k - i_k$. Then there is a unique way to extend $\gamma$ to a cylindrical growth diagram.
\end{lemma}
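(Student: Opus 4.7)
The plan is to use local growth rules to propagate the values along the path to the entire diagram, simultaneously establishing existence and uniqueness of the extension.

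The key ingredient is a local rule at each unit square: if three of the four corner partitions of a unit square in $\II$ are known in the configuration SW+NW+NE (or SW+SE+NE), then the fourth is uniquely determined by conditions (1) and (2). Concretely, let the SW, NW, NE corners carry partitions $\lambda,\mu,\rho$, and let $\nu$ denote the SE partition. Since $\nu$ covers $\lambda$ and is covered by $\rho$, we have $\nu=\lambda+b$ for some $b\in\rho\setminus\lambda$. Write $\rho\setminus\lambda=\{b_1,b_2\}$. If these boxes are non-adjacent, a short combinatorial check shows that both are addable corners of $\lambda$; condition (2) then forces $\mu\neq\nu$, and writing $\mu=\lambda+b_i$ uniquely identifies $\nu=\lambda+b_{3-i}$. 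If $b_1,b_2$ are adjacent (a horizontal or vertical domino), the same kind of check shows that only one of them is addable to $\lambda$, forcing $\nu=\mu$. In both cases $\nu$ is unique; a symmetric argument handles the SW+SE+NE configuration.

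With this in hand, I would extend $\gamma$ by iterated path deformation, the standard technique of Fomin-style growth diagrams. Any two monotone paths in $\II$ sharing the same endpoints are connected by a sequence of elementary swaps at unit squares, each swap replacing an intermediate vertex by the opposite corner; the local rule assigns the new vertex its unique value. Starting with the given path, repeated swaps determine $\gamma$ on every path sharing its endpoints, and hence at every lattice point of the finite region swept out by such paths. To extend to the rest of $\II$, I would use that condition (1) forces the boundary values $\gamma_{i,i}=\emptyset$ and $\gamma_{i,i+r}=\rect$ for every $i\in\ZZ$; these can be appended to paths as trivial segments to produce paths through any prescribed lattice point of $\II$, on which $\gamma$ is then determined by further swaps from previously computed paths.

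The main obstacle is verifying global consistency: the value at a given lattice point must be independent of the sequence of swaps used to compute it. This follows from the standard diamond-lemma argument, since swaps at disjoint unit squares trivially commute, and any two swap sequences connecting a fixed pair of paths can be transformed into each other by such commutations. Uniqueness of the full extension is then immediate from uniqueness at each individual swap, and existence follows by carrying out the swap procedure.
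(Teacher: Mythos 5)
Your proof is correct and is a (much more detailed) elaboration of exactly the argument the paper gives, which is the single sentence that condition~(2) together with the boundary values $\gamma_{kk}=\emptyset$ and $\gamma_{k(k+r)}=\rect$ determines a unique recursion. Your identification of the two deterministic local configurations (computing an intermediate corner of a unit square from the other intermediate together with the two extreme corners --- note that the other two configurations are genuinely \emph{not} deterministic, so your restriction to these is essential and correct) and your use of the forced boundary values to propagate beyond the rectangle swept by the given path are precisely what the paper's one-line proof leaves implicit.
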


\begin{proof}
Condition~(2), combined with the boundary values $\gamma_{kk} = \emptyset$ and $\gamma_{k(k+r)} = \rect$ give a unique recursion.
\end{proof}

\begin{example}
If $\gamma_{ij} = (4,2,1)$, $\gamma_{(i-1)j} = (4,2,2)$ and $\gamma_{(i-1)(j+1)} = (4,3,2)$, then $\gamma_{i(j+1)}$ must be $(4,3,1)$ in order to be consistent with~(2).
If $\gamma_{ij} = (4,2,1)$, $\gamma_{(i-1)j} = (4,3,1)$ and $\gamma_{(i-1)(j+1)} = (4,4,1)$, then~(2) imposes no condition, but there is only one choice for $\gamma_{(i+1)j}$ obeying~(1), namely $(4,3,1)$.
\end{example}

\begin{lemma} \label{growth diagram relevant}
Given a maximal face $\sigma$ of the $CW$-structure on $\cS(\RR)$, define $\gamma_{ij}$ by the geometric recipe above. Then $\gamma_{ij}$ is a cylindrical growth diagram.
\end{lemma}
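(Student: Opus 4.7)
The plan is to verify the two defining axioms of a cylindrical growth diagram for the geometrically defined $\gamma$.

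For the size claim $|\gamma_{ij}|=j-i$ in (1), I would apply Proposition~\ref{dimension equality} to the component of the stable curve over $\sigma_{ij}$ containing the marked points $[j,i)$: its special points consist of $r-(j-i)$ marked points (each carrying $\square$) and one node (carrying $\gamma_{ij}$), and the partition sizes must sum to $d(n-d)=r$. For the single-box growth claim of (1), I would descend to the codimension-$2$ face $\sigma_{ij}\cap\sigma_{(i-1)j}$ of $\sigma$; the corresponding stable curve has a middle component $D$ carrying only the marked point $s(i-1)$ and two nodes. The continuity argument of Lemma~\ref{constant label}, applied on either side, identifies the two node labels on $D$ as $\gamma_{ij}$ and $\gamma_{(i-1)j}^C$. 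Nonemptiness of the Schubert variety on $D$ therefore forces $c^{\smallrect}_{\square,\gamma_{ij},\gamma_{(i-1)j}^C}=c^{\gamma_{(i-1)j}}_{\square,\gamma_{ij}}\neq 0$, which by Pieri's rule (Proposition~\ref{Pieri box}) means $\gamma_{(i-1)j}$ is obtained from $\gamma_{ij}$ by adding one box. The same argument gives the claim for $\gamma_{i(j+1)}$.

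For condition~(2), which I read as $\gamma_{(i-1)j}\neq\gamma_{i(j+1)}$ (the middle entries of the diagonal square with corners $\gamma_{ij}$ and $\gamma_{(i-1)(j+1)}$), I would pass to the codimension-$2$ face $\tau=\sigma_{ij}\cap\sigma_{(i-1)(j+1)}$ of $\sigma$. Its stable curve has a middle component $C_b$ carrying two marked points $s(i-1),s(j)$ and two nodes labeled $\gamma_{ij}$ and $\gamma_{(i-1)(j+1)}^C$. The $C_b$-moduli factor of $\tau$ is a local $\Mbar{4}$, and the restriction of $\cS$ to it is precisely the family $\cS(\gamma_{ij},\gamma_{(i-1)(j+1)}^C,\square,\square)\to\Mbar{4}$ treated in Section~4. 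Under the hypothesis that $\gamma_{(i-1)(j+1)}/\gamma_{ij}$ is two nonadjacent boxes, Theorem~\ref{basic monodromy} says this is a smooth connected double cover whose six real boundary points carry, in cyclic order, the labels $(\kappa_1,\kappa_2,(1,1),\kappa_2,\kappa_1,(2))$, where $\kappa_1,\kappa_2$ are the two intermediate partitions. The cyclic order on the four special points of $C_b$ inherited from the $r$-gon places $s(i-1)$ and $s(j)$ non-adjacently, so the maximal cell of $\Mbar{4}(\RR)$ covered by $\sigma$ is the unique one whose two endpoints correspond to the facets $\sigma_{(i-1)j}$ and $\sigma_{i(j+1)}$; the remaining $\{s(i-1),s(j)\}$-collision boundary, excluded by cyclic order, is precisely where the labels $(1,1),(2)$ live. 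Reading the cyclic list then forces each sheet of the double cover over this cell to realize one of $\kappa_1,\kappa_2$ at one endpoint and the other at the opposite endpoint. Translating through the node identifications, this gives $\gamma_{(i-1)j}^C\neq\gamma_{i(j+1)}^C$ and hence $\gamma_{(i-1)j}\neq\gamma_{i(j+1)}$.

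The principal obstacle is the bookkeeping in step~(2): matching the three local $\Mbar{4}$-boundaries to the facets $\sigma_{(i-1)j}$, $\sigma_{i(j+1)}$, and the forbidden $\{s(i-1),s(j)\}$-collision boundary; confirming that the forbidden boundary is precisely the one carrying the labels $(1,1),(2)$ rather than the $\kappa$'s; and reading from the cyclic order in Theorem~\ref{basic monodromy} that any single sheet over the permitted cell realizes distinct $\kappa$ values at its two endpoints. Condition~(1) is routine by comparison.
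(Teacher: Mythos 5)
Your proposal is correct and follows essentially the same route as the paper: the size claim via Proposition~\ref{dimension equality}, the single-box claim by degenerating to a three-component curve whose middle component forces a nonzero Pieri coefficient, and condition~(2) by passing to the codimension-$2$ stratum whose middle component carries the $\Mbar{4}$-family of Theorem~\ref{basic monodromy}. The bookkeeping you flag as the main obstacle works out exactly as you describe — the paper sweeps the same arc $\tau\in(-\infty,0)$, whose endpoints are the $\sigma_{(i-1)j}$- and $\sigma_{i(j+1)}$-collisions, and reads off from the cyclic list that each sheet sees the two distinct $\kappa$'s there while the excluded $\{s(i-1),s(j)\}$-collision carries $(1,1)$ and $(2)$.
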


\begin{proof}
%Condition~(3) is because the $\nu$ function labels the two sides of a node with complementary partitions.
We first check the normalization that $|\gamma_{ij}| = |j-i|$. This is from Proposition~\ref{dimension equality}.

We next show that $\gamma_{i(j+1)}$ is obtained by adding a single box to $\gamma_{ij}$. 
Let $X$ be a stable curve with three components, one of which contains the marked point $j$ and the nodes joining those component with the other two.
The other components contain the marked points $[j+1, i)$ and $[i,j)$, in circular order. 
Then one node is labeled by $(\gamma_{ij}, \gamma^{C}_{ij})$ and the other is labeled by $(\gamma_{i(j+1)}, \gamma^{C}_{i(j+1)})$. 
The central component has labels $\gamma_{ij}$, $\gamma^{C}_{i(j+1)}$ and $\square$. 
So the Littlewood-Richardson coefficient $c^{\rect}_{\gamma_{ij}, \gamma^{C}_{i(j+1)}, \square}$ is positive or, equivalently, $c_{\gamma_{ij}, \square}^{\gamma_{i(j+1)}} >0$.
From the Pieri rule, this means that $\gamma_{i(j+1)}$ is obtained by adding a box to $\gamma_{ij}$.

The same argument shows that $\gamma_{(i-1)j}$ is obtained by adding a box to $\gamma_{ij}$.

Finally, we check condition~(2). 
Pick a curve $X$ with real points marked by $[i,j)$ in circular order, plus one more point between $j-1$ and $i$.
Pick another point $Z$ with real points marked by $[j+1, i-1)$ in circular order, plus one more point between $i-2$ and $j+1$. 
Glue $X$ and $Z$ to $\RR \PP^1$ at $0$ and $\infty$. Also mark two more points of this central $\RR \PP^1$, at $1$ and at $\tau \in (-\infty, 0)$, and label these points $j$ and $i-1$. As $\tau$ varies through $(-\infty, 0)$, this sweeps out a path through $\sigma$, and this path completes to a closed path for the limiting case $\tau = \infty$ and $\tau=0$.

On the central component, $0$ and $\infty$ are labeled by the partitions $\gamma_{ij}$ and $\gamma^{C}_{(i-1)(j+1)}$. 
The points $1$ and $\tau$ are labeled by $\square$. 
We want to compute the labels that wind up on the additional node formed when $\tau$ collides with $0$, and when $\tau$ collides with $\infty$. 
According to Theorem~\ref{basic monodromy}, they are unequal, as desired.
\end{proof}

The main result of this section is
\begin{theorem} \label{all square CW}
Over any maximal face $\sigma$ of $\Mbar{r}(\RR)$, the above construction gives a bijection between the preimage faces of $\cS(\RR)$ and cylindrical growth diagrams.
\end{theorem}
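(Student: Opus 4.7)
The proof reduces both sides to $\SYT(d,n) = \SYT(\rect)$ by using a fixed caterpillar vertex $v$ of $\sigma$ as a common reference. Fix such a $v$; the corresponding stable curve has $r-2$ irreducible components, each a $\PP^1$ with exactly three special points (either two leaves and one node, or one leaf and two nodes). By Pieri's rule (Proposition~\ref{Pieri box} and Corollary~\ref{Pieri two box}) together with Proposition~\ref{Schub trans}, every such component's Schubert intersection is a single reduced point when nonempty. Consequently, a point of $\cS(\RR)$ above $v$ is determined by its sequence of partition labels at the $r-3$ nodes, and the admissible sequences are precisely the chains $\emptyset = \mu_0 \subset \mu_1 \subset \cdots \subset \mu_r = \rect$ with $|\mu_k|=k$, i.e.\ elements of $\SYT(d,n)$. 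By the hook-length formula cited in the footnote, the degree of the cover $\cS(\RR) \to \Mbar{r}(\RR)$ equals $|\SYT(d,n)|$, so the map
\[ \tilde\sigma \longmapsto T(\tilde\sigma,v) \in \SYT(d,n) \]
sending a maximal preimage face to the tableau labelling its unique point above $v$ is a bijection from the preimage faces of $\sigma$ onto $\SYT(d,n)$.

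The caterpillar triangulation underlying $v$ determines a specific path $\pi_v = ((i_0,j_0),(i_1,j_1),\ldots,(i_r,j_r))$ through $\II$: its internal chords are enumerated by the intermediate pairs $(i_k,j_k)$ in the order they appear along the caterpillar, with consecutive pairs differing by $(-1,0)$ or $(0,1)$ according to which end of the caterpillar the next node attaches to, and with $j_0 = i_0$, $j_r - i_r = r$. By Lemma~\ref{constant label}, the geometrically defined partition $\gamma(\tilde\sigma)_{i_k j_k}$ on the facet adjacent to $v$ is exactly the label of the $k$-th caterpillar node of the point of $\tilde\sigma$ above $v$. Therefore $\gamma(\tilde\sigma)|_{\pi_v} = T(\tilde\sigma,v)$.

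Finally, the size normalization $|\gamma_{ij}| = |j-i|$ built into the definition of a cylindrical growth diagram automatically forces $\gamma_{kk} = \emptyset$ and $\gamma_{k(k+r)} = \rect$ for every $k$, so by Lemma~\ref{recursion} the restriction $\gamma \mapsto \gamma|_{\pi_v}$ is a bijection from cylindrical growth diagrams onto $\SYT(d,n)$. Combining the three steps, in the composite
\[ \tilde\sigma \;\longmapsto\; \gamma(\tilde\sigma) \;\longmapsto\; \gamma(\tilde\sigma)|_{\pi_v} \;=\; T(\tilde\sigma,v) \]
the outer map and the second arrow are bijections, so the first arrow $\tilde\sigma \mapsto \gamma(\tilde\sigma)$ is a bijection, as required.

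The principal step is the middle one: setting up the combinatorial path $\pi_v$ through $\II$ to match, chord by chord, the linearly ordered sequence of nodes in the caterpillar curve at $v$, and then verifying that the geometrically defined labels $\gamma(\tilde\sigma)$ on the facets adjacent to $v$ coincide with the node labels of the unique point of $\tilde\sigma$ above $v$. Both halves are essentially an unpacking of definitions, but the indexing conventions for chords, facets, and path steps must be aligned with some care before the three bijections line up cleanly.
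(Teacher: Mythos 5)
Your proof is correct and follows essentially the same route as the paper's: both arguments reduce to a caterpillar vertex of $\sigma$, identify its fiber with $\SYT(d,n)$ via Pieri and Theorem~\ref{thm fibers}, and match this against the path-restriction bijection of Lemma~\ref{recursion} between cylindrical growth diagrams and standard tableaux. The only cosmetic differences are that you start from the caterpillar vertex and derive the path through $\II$ (the paper fixes the path first and builds the caterpillar from it), and your appeal to the hook-length formula for the covering degree is redundant once the fiber over the vertex has been identified with $\SYT(d,n)$.
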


\begin{proof}[Proof of Theorem~\ref{all square CW}]
We recall the notation $(s(1), s(2), \ldots, s(r))$ for the circular ordering of $[r]$ corresponding to $\sigma$. We will use the notation $s(i)$ for $i$ an arbitrary integer, meaning to reduce $i$ modulo $r$.

Fix a path $\delta = ((i_0, j_0), (i_1, j_1), \ldots, (i_r, j_r))$ through $\II$.
As shown in Lemma~\ref{recursion}, there is a bijection between cylindrical growth diagrams, and maps $\delta \to \Lambda$ which start at $\emptyset$ and grow to $\rect$, adding one box at a time. 
Such a map $\delta \to \Lambda$ can, clearly, be thought of as a standard Young tableau of shape $\rect$; we recall the notation $\SYT(d,n)$ for the set of such standard tableaux.
So we must show that each standard Young tableau occurs for exactly one face of $\cS(\RR)$, over the fixed face of $\Mbar{r}(\RR)$.

Define a permutation $\pi$ of $[r]$ as follows: $\pi(k)$ is $s(i_k)$ if $i_k = i_{k-1} -1$ and $\pi(k) = s(j_{k-1})$ if $j_k = j_{k-1} + 1$. To see that this is a permutation, note that the quantity $j_k -i_k$ increases by $1$ every time $k$ increases. So, for each $k$, we add one more element to the interval $[i_k, j_k)$. When we reach $r$, we have $j_r - i_r = r$, so the interval $[i_r, j_r)$ will contain every equivalence class modulo $r$ exactly once. The permutation $\pi$ lists the elements of $[i_r, j_r)$ in the order they were added. In other words, $\pi$ is constructed so that $\pi([k]) = [i_k, j_k)$. 

We will now describe a particular stable curve $X$, and will write $x$ for the corresponding point in $\Mbar{r}(\RR)$. The reader may want to look at Figure~\ref{cater example}, where we show $X$ for the path in Figure~\ref{growth example}, and at the discussion after this proof.

The curve $X$ has $r-2$ components, $X_1$, $X_2$, \dots, $X_{r-2}$, with nodes connecting $X_{i}$ and $X_{i+1}$. 
The component $X_1$ has marked points $z(\pi(1))$ and $z(\pi(2))$; the component $X_{r-2}$ has marked points $z(\pi(r-1))$ and $z(\pi(r))$; for $2 \leq i \leq r-3$, the component $X_i$ contains the marked point $z(\pi(i+1))$. 
So the node between $X_{k-1}$ and $X_k$ separates the marked points labeled by $[i_k, j_k)$ and those labeled by $[j_k, i_k)$. 
Thus, knowing the values of $\gamma_{i_k j_k}$ is equivalent to knowing the partitions labeling the nodes in the fiber of $\cS$ above $x$.

From Theorem~\ref{thm fibers}, the fiber of $\cS$ over $x$ corresponds to all ways of labeling the nodes of $X$ by partitions, and of choosing a solution to the corresponding Schubert problem for each component of $X$.
In this case, each of these Schubert problems is of the form $\lambda \cdot \mu \cdot \square$. By the Pieri rule, $c_{\lambda \mu \square}^{\smallrect}$ is $1$ if $\mu^{C}$ is $\lambda$ with a box added on, and $0$ otherwise. 
So we only get a point of $\cS$ over $x$ if the labels of the nodes form a chain in $\SYT(d,n)$, and in this case we get one point. 
So we have a bijection from $\SYT(d,n)$ to the fiber of $\cS$ over $x$.

We now note that $x$ is a vertex of $\sigma$. For every node of $X$, the marked points lying to one side of this node form a cyclic interval in the circular ordering $s$, so the topology of $X$ occurs as a limit of the circular ordering topology given by $s$.

Fix a particular growth diagram $\gamma$ for which we want to show that there is a unique face labeled $\gamma$. 
Let $T$ be the standard young tableaux found by restricting $\gamma$ to the path $\delta$.

If $\tau$ is any maximal face of $\cS(\RR)$, lying above $\sigma$ and labeled by $\gamma$, then let $y$ be the point of $\tau$ above $x$. Then $y$ is labeled by $T$.
Conversely, if there is some $y$ above $x$ labeled by $T$, then, since $\cS(\RR) \to \Mbar{r}$ is a covering map, there is a unique maximal face $\tau$ of $\cS(\RR)$ lying above $\sigma$ and containing $y$.
Letting $\gamma'$ be the label of $\tau$, we see that $\gamma$ and $\gamma'$ coincide along the path $\delta$ and are hence equal, by Lemma~\ref{recursion}.

So maximal faces of $\cS(\RR)$ labeled by $\gamma$ are in bijection with points lying above $x$ labeled by $T$, and there is one of those.
\end{proof}

We deduce a combinatorial corollary, which is~\cite[Theorem 4.4]{Haiman}.

\begin{prop} \label{glide reflect}
In any cylindrical growth diagram, we have $\gamma_{ij} = \gamma_{(r+j) i}^{C} = \gamma_{(r+j)(r+j)}$
\end{prop}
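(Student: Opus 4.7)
The plan is to deduce both equalities directly from the geometric construction of growth diagrams, using Theorem~\ref{all square CW}. By that theorem, every cylindrical growth diagram $\gamma$ is realized as the combinatorial label of some maximal face $\sigma$ of $\cS(\RR)$ via the recipe of Lemma~\ref{growth diagram relevant}. The key observation is that in that recipe, $\gamma_{ij}$ depends on $(i,j) \in \II$ only through the residues $\bar{\imath}, \bar{\jmath} \in \ZZ/r\ZZ$: explicitly, $\gamma_{ij}$ records the partition that the node-data of $\sigma$ assigns to the side of the wall $\sigma_{\bar{\imath}\bar{\jmath}}$ containing the cyclic interval $[\bar{\jmath}, \bar{\imath})$ of marked points.

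With this interpretation, the periodicity equality $\gamma_{ij} = \gamma_{(r+i)(r+j)}$ is immediate, since the shift by $(r,r)$ changes neither the residues $(\bar{\imath},\bar{\jmath})$ nor the selected cyclic interval, so both labels record the partition on the same side of the same wall of $\sigma$. The other equality is the glide-reflection symmetry, which relates $\gamma_{ij}$ to the label of the ``opposite'' pair (obtained by swapping the indices and translating by $r$ so as to remain in $\II$). After reducing modulo $r$ that pair indexes the same wall $\sigma_{\bar{\imath}\bar{\jmath}}$ but now selects the \emph{complementary} cyclic interval on the other side of the node; by the node-complementarity condition~(2) of the definition immediately following Theorem~\ref{thm Schub family}, the partitions attached to opposite sides of a node are related by $\nu \mapsto \nu^C$, yielding $\gamma_{ij}^C$.

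The only care needed is routine bookkeeping: first, one must verify that the shifted pairs actually lie in $\II$ (both checks collapse to $0 \le j - i \le r$, automatic from $(i,j)\in\II$); second, one must separately handle the four boundary regimes $j - i \in \{0, 1, r-1, r\}$, where $\gamma$ is defined by fiat rather than via the geometric recipe. In each such regime both sides of each asserted equality degenerate to one of the prescribed boundary values $\emptyset$, $\square$, $\square^C$, or $\rect$, and the identity holds by inspection. I expect this to be the only mild obstacle --- essentially clerical --- since once Theorem~\ref{all square CW} is in hand, the entire proposition is just a careful re-reading of the geometric definition, in sharp contrast to Haiman's original combinatorial proof via promotion and evacuation on rectangular tableaux.
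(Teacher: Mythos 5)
Your proposal is correct and is essentially the paper's own argument: both invoke Theorem~\ref{all square CW} to realize an arbitrary cylindrical growth diagram geometrically, then read off the periodicity as two names for the same node-label and the glide reflection as the two complementary labels on opposite sides of one node. Your extra bookkeeping (checking the shifted indices lie in $\II$ and the boundary regimes $j-i\in\{0,1,r-1,r\}$) is harmless and, if anything, slightly more careful than the paper's two-sentence proof; note also that the displayed statement contains a typo and the last term should read $\gamma_{(r+i)(r+j)}$, as your reading correctly assumes.
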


\begin{proof}
We now know that every cylindrical growth diagram comes from an actual geometric face of $\cS$ lying over $\sigma$. 
Let $X$ be a curve in that face where one component contains the marked points $[i, j)$ and the other contains the marked points $[j,i) = [j, i+r)$. 
Then $\gamma_{ij}$ and $\gamma_{(r+j) i}$ are the two partitions labeling that node from opposite sides, so they are complementary, and $\gamma_{ij}$ and $\gamma_{(i+r)(j+r)}$ are two names for the partition labeling the same node.
\end{proof}

This now explains the terminology cylindrical: If we quotient $\II$ by $(i,j) \equiv (i+r, j+r)$, it is natural to draw the corresponding diagrams on a cylinder.

Figure~\ref{cater example} shows the caterpillar curve corresponding to the path in Figure~\ref{growth example}. The rows of Figure~\ref{growth example} are numbered $1$ through $7$, so that the path travels through positions $(4,4)$, $(3,4)$, $(3,5)$, $(3,6)$, $(2,6)$, $(2,7)$, $(2,8)$.
The marked points are shown as solid dots, and one side of each node is labeled with the partition coming from Figure~\ref{growth example}. (The complementary partition, labeling the opposite side of the node, has been omitted for clarity.)
\begin{figure}
\centerline{\includegraphics{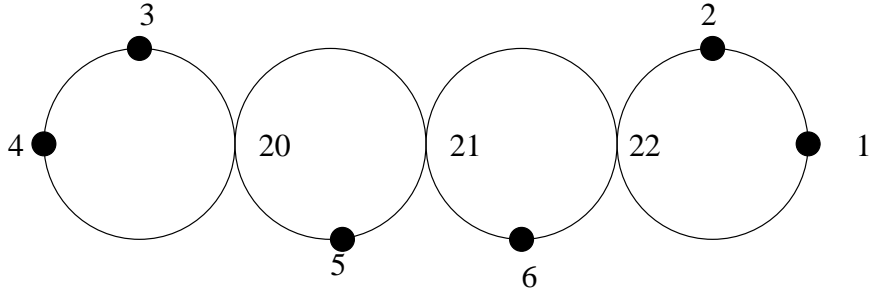}}
\caption{A caterpillar curve, corresponding to the path in Figure~\ref{growth example}} \label{cater example}
\end{figure}

\subsection{Effect of wall crossing}
%Thinking of elements of $\SYT(d,n)$ as standard Young tableaux, changing paths through $\II$ gives various operations on standard Young tableaux. In particular, changing the path $((0,0), (0,1), \ldots, (0,r))$ to $((1,0), (1,1), \ldots, (1,r))$ is called \newword{promotion} and changing a portion of the chain from  from $((k,k), (k,k+1), \ldots, (k,k+m))$ to $((k+m,k), (k+m-1, k+m), \ldots, (k+m, k+m))$ is called \newword{evacuation}. 
%See PURBHOO2 for a discussion of the geometric significance of promotion and evacuation in Schubert calculus.

We now understand how to index the maximal faces of $\cS$: They correspond to pairs of (1) a circular ordering of $[r]$, up to reflection and (2) a cylindrical growth diagram.
Note that, if we shift the circular ordering, we translate the cylindrical growth diagram, changing $(i,j)$ to $(i+k, j+k)$. 
If we reflect the circular order, we reflect the cylindrical growth diagram, changing $(i,j)$ to $(k-j, k-i))$.

We now discuss how these facets are glued together.
Let $p < q$ be distinct elements of $[r]$. 
Let $\hat{\sigma}$ be the adjacent face of $\Mbar{r}(\RR)$ where we reverse the order of $s(q)$, $s(q+1)$, \dots, $s(p-1)$ (indices are cyclic modulo $r$). 
Let $\gamma_{\bullet \bullet}$ be a cylindrical growth pattern indexing a face over $\sigma$, and let $\hat{\gamma}_{\bullet \bullet}$ index the face over $\hat{\sigma}$.

\begin{prop} \label{wall cross}
For $p \leq i \leq j \leq q$, we have $\gamma_{ij} = \hat{\gamma}_{ij}$. For $q \leq i \leq j \leq p+r$, we have $\gamma_{ij} = \hat{\gamma}_{(p+q-j-1)(p+q-i-1)}$.
\end{prop}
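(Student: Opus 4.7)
The plan is to prove both claims by a continuity argument on codimension-two strata, using Lemma~\ref{constant label} as the key tool: on any connected facet of a maximal face, the partition label attached to a given node is constant, so it can be computed by approaching that facet from any direction.

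First I would dispose of the easy claim. For $(i,j)$ with $p \leq i \leq j \leq q$, the interval $[i,j)$ is contained in the unchanged arc $[p,q)$, so the chord $\{i,j\}$ is non-crossing with the wall chord $\{p,q\}$ in the $r$-gon. Hence $\sigma_{pq} \cap \sigma_{ij}$ is a nonempty codimension-two stratum of $\sigma$, whose generic point corresponds to a stable curve with three components and two nodes---the wall node separating $[p,q)$ from $[q,p)$, and the node separating $[i,j)$ from its complement. Because the reversal does not touch any marked point of $[p,q)$, this same codimension-two stratum of $\Mbar{r}(\RR)$ appears from the $\hat\sigma$-side as $\hat\sigma_{pq} \cap \hat\sigma_{ij}$. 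Applying Lemma~\ref{constant label} inside each of $\sigma$ and $\hat\sigma$ shows that $\gamma_{ij}$ and $\hat\gamma_{ij}$ both equal the partition appearing at the $[i,j)$-node of the stable curve sitting on this shared stratum, so they are equal.

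For the second claim the strategy is identical, but the bookkeeping is more involved because the chord $\{i,j\}$ now sits inside the arc whose ordering has been reversed. The wall-crossing is governed by the reversal formula which, up to the convention on where positions sit relative to marked points, identifies the set of marked points $\{s(i), s(i+1), \ldots, s(j-1)\}$ with the $\hat s$-interval $\hat{[p+q-j-1,\, p+q-i-1)}$ (this is what accounts for the $-1$ shift in the index formula). Granting this identification, the stable curve with two nodes sitting on $\sigma_{pq}\cap\sigma_{ij}$ also lies on $\hat\sigma_{pq}\cap\hat\sigma_{(p+q-j-1)(p+q-i-1)}$. Lemma~\ref{constant label} applied on both sides then forces $\gamma_{ij} = \hat\gamma_{(p+q-j-1)(p+q-i-1)}$.

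The only real obstacle is notational: one must carefully unwind the position convention from Section~\ref{sec:CW} (edges of the $r$-gon labelled by marked points, vertices indexing the chord endpoints) to verify that the reversal formula produces exactly the shift claimed in the proposition. Once that is pinned down, no new geometric input is needed: the argument is entirely a continuity/locality statement about the pullback of the $CW$-structure of $\Mbar{r}(\RR)$ to the cover $\cS(\RR)$. In particular, unlike the wall-crossings studied in Theorem~\ref{basic monodromy}, no monodromy computation is required here, because the chord $\{i,j\}$ is a genuine chord in the polygon on both sides of the wall; Theorem~\ref{basic monodromy} would only enter if we tried to describe what $\hat\gamma_{pq}$ itself becomes after crossing.
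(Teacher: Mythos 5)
Your argument is essentially identical to the paper's own proof: both observe that for $(i,j)$ in the stated ranges the relevant node occurs on stable curves lying on (the closure of) the wall between $\sigma$ and $\hat\sigma$, so that by the continuity argument of Lemma~\ref{constant label} the partition at that node is the same from either side and only its index in $\II$ changes according to the reversal of the arc. Like the paper, you leave the final index arithmetic (the source of the $-1$ shift) as a convention-unwinding exercise rather than carrying it out, so the two proofs match in both method and level of detail.
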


\begin{proof}
The nodes in question can occur on curves which live on the wall between $\sigma$ and $\hat{\sigma}$. 
The partition labeling the node is the same in both cases, but which element of $\II$ indexes that node will differ if $i$ and $j$ are in $[q,p)$, because the elements in $[q,p)$ are reversed when we cross the wall.
\end{proof}

The entries $\hat{\gamma}_{ij}$ for $(i,j)$ not in the given ranges are uniquely determined by the values within these ranges (Lemma~\ref{recursion}), but there is no simple description of them.

We could have equally well described crossing the wall by reversing $[p,q)$, in which case the circular growth diagram $\hat{\gamma}$ would differ by the appropriate reflection.
\begin{figure}
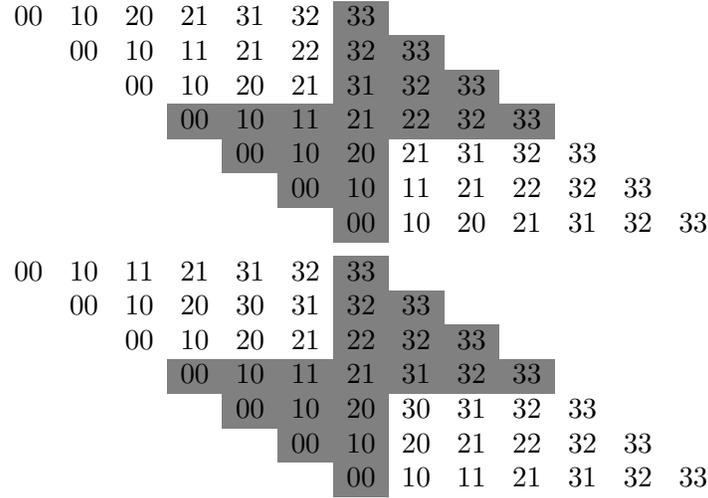

$$\begin{array}{ccccccccccccc}
00 & 10 & 20 & 21 & 31 & 32 & \cellcolor{gray} 33 &    &    &    &    &    & \\
   & 00 & 10 & 11 & 21 & 22 & \cellcolor{gray} 32 &\cellcolor{gray}  33 &    &    &    &    & \\
   &    & 00 & 10 & 20 & 21 & \cellcolor{gray} 31 &\cellcolor{gray}  32 & \cellcolor{gray} 33 &    &    &    & \\
   &    &    & \cellcolor{gray} 00 & \cellcolor{gray} 10 & \cellcolor{gray} 11 & \cellcolor{gray} 21 & \cellcolor{gray} 22 & \cellcolor{gray} 32 &\cellcolor{gray} 33 &    &    & \\
   &    &    &    & \cellcolor{gray} 00 &\cellcolor{gray}  10 &\cellcolor{gray}  20 & 21 & 31 & 32 & 33 &    & \\
   &    &    &    &    &\cellcolor{gray} 00 &\cellcolor{gray} 10 & 11 & 21 & 22 & 32 & 33 & \\
   &    &    &    & & & \cellcolor{gray} 00 & 10 & 20 & 21 & 31 & 32 & 33\\
\end{array}$$
$$\begin{array}{ccccccccccccc}
00 & 10 & 11 & 21 & 31 & 32 & \cellcolor{gray} 33 &    &    &    &    &    & \\
   & 00 & 10 & 20 & 30 & 31 & \cellcolor{gray} 32 &\cellcolor{gray}  33 &    &    &    &    & \\
   &    &  00 &  10 &  20 &  21 & \cellcolor{gray} 22 &\cellcolor{gray}  32 & \cellcolor{gray} 33 &    &    &    & \\
   &    &    & \cellcolor{gray} 00 & \cellcolor{gray} 10 & \cellcolor{gray} 11 & \cellcolor{gray} 21 & \cellcolor{gray} 31 & \cellcolor{gray} 32 &\cellcolor{gray} 33 &    &    & \\
   &    &    &    & \cellcolor{gray} 00 &\cellcolor{gray}  10 &\cellcolor{gray}  20 & 30 & 31 & 32 & 33 &    & \\
   &    &    &    &    &\cellcolor{gray} 00 &\cellcolor{gray} 10 & 20 & 21 & 22 & 32 & 33 & \\
   &    &    &    & & & \cellcolor{gray} 00 & 10 & 11 & 21 & 31 & 32 & 33 \\
\end{array}$$
\caption{Crossing a wall}\label{wall example}
\end{figure}

The top half of Figure~\ref{wall example} is the cylindrical growth diagram of Figure~\ref{growth example}. 
The bottom half shows the new growth diagram we obtain when we cross a wall reversing the order of $(s(4), s(5), s(6))$. 
Proposition~\ref{wall cross} tells us that the southwest shaded triangle in the bottom diagram is identical to that in the top diagram, and that the northeast shaded triangles are reflections of each other over a line of slope $1$. (This reflection interchanges the entries $22$ and $31$.) The unshaded entries must then be computed recursively.

We conclude with the promised conceptual verification that the points~(3) and~(6) in Theorem~\ref{basic monodromy} are ordered as claimed.

\begin{lemma} \label{jdt domino}
Let $\gamma_{\bullet \bullet}$ be a cylindrical growth diagram. As we go from $\gamma_{ij}$ to $\gamma_{i(j+1)}$ to $\gamma_{i(j+2)}$, we add two boxes. 
If the second box added is northeast of the first, then $\gamma_{j(j+2)} = (2)$. If the second box is added southwest of the first, then $\gamma_{j(j+2)} = (1,1)$.
\end{lemma}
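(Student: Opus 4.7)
The plan is to prove this by induction on $j - i$. The base case $i = j$ is immediate: the row is $(\emptyset, \square, \gamma_{j(j+2)})$, so the first box is added at $(1,1)$ and the second at $(1,2)$ (east of the first, giving $\gamma_{j(j+2)} = (2)$) or at $(2,1)$ (south, giving $\gamma_{j(j+2)} = (1,1)$), matching the NE/SW dichotomy exactly.

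For the inductive step with $j - i \geq 1$, I will examine the $2 \times 3$ block of entries $\gamma_{i'j'}$ for $i' \in \{i, i+1\}$ and $j' \in \{j, j+1, j+2\}$. Writing $B_1, B_2$ for the two boxes added along the row $R_i$ and $B_1', B_2'$ for those added along $R_{i+1}$, the inductive hypothesis applied to $R_{i+1}$ (which has $j - (i+1) < j - i$) determines $\gamma_{j(j+2)}$ from the orientation of $(B_1', B_2')$. So it suffices to show that the NE/SW orientation of $(B_1, B_2)$ agrees with that of $(B_1', B_2')$. I will split into four cases based on whether each of the two $2 \times 2$ sub-squares of the block is in the ``non-adjacent'' case of condition~(2) (the two intermediate partitions differ) or the ``adjacent'' case (they coincide, so the skew shape of size 2 is a horizontal or vertical domino).

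In the both-non-adjacent case, unwinding condition~(2) directly forces $B_1 = B_1'$ and $B_2 = B_2'$, so orientations match trivially. In the mixed cases (exactly one sub-square adjacent), one of the pairs still coincides while the other consists of the two boxes of a domino; the key combinatorial input is that two distinct removable (resp.\ addable) corners of any Young diagram lie in strict NE-SW position and cannot share a row or column, which forces the fixed box and the domino to be arranged so that both boxes of the domino lie on the same side of the fixed box in the NE/SW classification. In the both-adjacent case, the four central entries collapse into a single chain $\gamma_{(i+1)j} \subset \gamma_{(i+1)(j+1)} \subset \gamma_{(i+1)(j+2)} \subset \gamma_{i(j+2)}$ built by three successive box additions $c_1, c_2, c_3$, with both $(c_1, c_2)$ and $(c_2, c_3)$ adjacent pairs; an addability check on Young diagrams rules out mixed horizontal-then-vertical (or vice versa) transitions, because the required intermediate cell is either already in the partition or provably absent, so all three boxes lie in a single row (both orientations ``NE'') or a single column (both ``SW'').

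The main obstacle is the case-by-case bookkeeping in the mixed cases, but each case reduces to a short combinatorial check once one invokes the staircase property of partition corners and the rigidity of domino addability on Young diagrams.
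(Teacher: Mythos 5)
Your proof is correct and takes essentially the same route as the paper's: both arguments propagate the NE/SW orientation of the two added boxes from row $i$ down to row $j$ one row at a time using the local growth-diagram rule, and then read off $\gamma_{j(j+2)}$ from the degenerate row $(\emptyset, \square, \gamma_{j(j+2)})$. Your four-case analysis of the $2\times 3$ block is exactly the verification the paper leaves to the reader with ``this is just checked by looking what the recursion does,'' and the cases all check out.
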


\begin{proof}
Suppose that the second box is added northeast of the first. Then I claim that, for every $k$ between $j-r+2$ and $j$, when we go from $\gamma_{kj}$ to $\gamma_{k(j+1)}$ to $\gamma_{k(j+2)}$, the second box is added northeast of the first.
This is just checked by looking what the recursion does when we change from the path $(\gamma_{(k+1)j}, \gamma_{kj}, \gamma_{k(j+1)}, \gamma_{k(j+2)})$ to $(\gamma_{(k+1)j}, \gamma_{(k+1)(j+1)}, \gamma_{(k+1)(j+2)}, \gamma_{k(j+2)})$.

In particular, along the path $(\gamma_{jj}, \gamma_{j(j+1)}, \gamma_{j(j+2)})$, we add the second box northeast of the first. But $\gamma_{jj} = \emptyset$ and $\gamma_{j(j+1)} = \square$, so the only way to add a square to the northeast is for $\gamma_{j(j+2)}$ to be $(2)$, a horizontal domino. 
Similar arguments do the other case.
\end{proof}

This is a very easy case of \emph{jeu de taquin}, which tells us how to transform $(\gamma_{ij}, \gamma_{i(j+1)}, \ldots, \gamma_{i(j+m)})$ to $(\gamma_{jj}, \gamma_{j(j+1)}, \ldots, \gamma_{j(j+m)})$. 

\begin{proof}[Conceptual verification of the missing point from Theorem~\ref{basic monodromy}] 
For notational simplicity, we work over the face of $\Mbar{r}$ corresponding to the circular ordering $1$, $2$, \dots, $r$. 

Take any cylindrical growth diagram with $(\gamma_{ij}, \gamma_{i(j+1)}, \gamma_{i(j+2)}) = (\lambda, \kappa_1, \mu')$. (Here $i$ and $j$ are chosen such that $j-i = |\lambda|$; these are not the same $i$ and $j$ as in Theorem~\ref{basic monodromy}.) 
Consider a stable curve with three components: a central component with markings $j$ and $j+1$, and two other components joined to this one, one with marked points indexed by $[i,j)$ and the other with marked points labeled by $[j+2, i)$. 
Let the component marked by $[i,j)$ be attached to the central component at $0$; 
let the marked point $j$ be $\tau$; let the marked point $j+1$ be $1$;
and let the component marked by $[j+2, i)$ be attached at $\infty$, with $0 < \tau < 1 < \infty$.
When $\tau$ collides with $0$, a new component bubbles off and, where that new component is attached to the central component, we see the marking $\kappa_1$.
When $\tau$ collides with $1$, a new component bubbles off and, where that new component is attached to the central component, we see $\gamma_{j(j+2)}$ which, by Lemma~\ref{jdt domino}, is $(2)$.

If we let $\tau$ go all the way around the central component, we recover the monodromy from Theorem~\ref{basic monodromy}. 
In particular, we have just seen that, as $\tau$ goes from $0$ to $1$, the point above $0$ where the node is labeled by $\kappa_1$ goes to $(2)$.
Similarly, the point above $0$ where the node is labeled by $\kappa_2$ goes to $(1,1)$.
\end{proof}

\subsection{The case $d=2$ and the work of Eremenko and Gabrielov}

In this section, we consider the special case $d=2$. The number of standard Young tableaux of shape $2 \times (n-2)$ is the Catalan number $C_n = \frac{(2n-4)!}{(n-2)!(n-1)!}$, and so this is also the number of cylindrical growth diagrams.
The bijection between cylindrical growth diagrams and standard Young tableaux requires choosing a path through $\II$.

Consider a sequence of points $p_1$, $p_2$, \dots, $p_{2n-4}$ arranged around a circle; note that $2n-4=r$. A \newword{non-crossing matching} is a partition of $[r]$ into $n-2$ sets of size 2 such that, if $\{ i,j \}$ and $\{ k, \ell \}$ are elements of the matching, then the chords $\overline{p_i p_j}$ and $\overline{p_k p_{\ell}}$ don't cross.
$C_n$ also counts the number of noncrossing matchings. 

The following is a bijection from noncrossing matchings to cylindrical growth diagrams: 
Among $\{ p_i, p_{i+1}, \ldots, p_{j-1} \}$, let there be $s$ arcs connecting one of these points to another, and let there be $t$ arcs connecting one of the points to a point in $\{ p_j, p_{j+1}, \ldots, p_{i-1} \}$, so $j-i = 2s+t$. Take $\gamma_{ij} = (s+t, s)$. 

This bijection was published in~\cite{PPR}, where it is credited to White. More precisely,~\cite{PPR} describes this as a bijection between noncrossing matchings and standard young tableaux which carries the rotational symmetry of matchings to the action of promotion on tableaux. A cylindrical growth diagram records all of the promotions of a given tableaux.

Observe that there is an arc of the matching from $p_i$ to $p_j$ if and only if, for some $s$, we have $\gamma_{(i+1)j} = (s,s)$ and $\gamma_{i(j+1)}=(s+1, s+1)$.
So $\gamma$ is determined by knowing the set of $(i,j)$ for which $((\gamma_{(i+1)j}, \gamma_{i(j+1)})$ is of the form $((s,s), (s+1, s+1))$.
The reader may like to verify that the growth diagram of Figure~\ref{growth example} corresponds to the matching $\{ 1, 2 \}$, $\{ 3, 4 \}$, $\{ 5, 6 \}$.

We introduce the abbreviation $\cS_0$ for the part of $\cS(\RR)$ lying above $\Mzero{r}(\RR)$ (note the absence of a bar).
Eremenko and Gabrielov~\cite{EG} describe a beautiful way to decompose $\cS_0$ into cells labeled by noncrossing matchings. We will show that our decomposition corresponds to theirs under the above combinatorial recipe.

We now explain the construction of~\cite{EG}; it begins by using the connection to the Wronski problem discussed in the introduction.
Take a point $w$ in $\cS_0$, and let $z_1$, $z_2$, \dots, $z_r \in \RR \PP^1$ be the marked points of the point of $\Mzero{r}(\RR)$ below $w$.
Identify $\RR^n$ with $\RR[x]_{\leq n-1}$, the space of polynomial of degree $\leq n-1$; let $w$ be spanned by $p(x)$ and $q(x) \in \RR[x]_{\leq n-1}$.
Let $\phi: \PP^1 \to \PP^1$ be the rational map $z \mapsto p(z)/q(z)$; choosing a different basis for $w$ changes this map by an automorphism of the target. The Schubert condition $\Omega(\square, z_i)$ means that $\phi$ has a critical point at $z_i$.
Let $W :=\phi^{-1}(\RR \PP^1)$, a one dimensional CW complex embedded in $\CC \PP^1$. Let $N$ and $S$ be the two hemispheres into which $\CC \PP^1$ is divided by $\RR \PP^1$. 

By the Shapiro-Shapiro conjecture (which was proved in~\cite{EG} in this case), $\phi$ is defined over $\RR$. This implies that $\RR \PP^1 \subset W$ and that $W \cap N$ and $W \cap S$ are complex conjugates of each other.
$W \cap N$ is a collection of non-crossing arcs, terminating at the $z_i$. (One might also imagine that $W$ contained some closed loops disjoint from the $z_i$, but Eremenko and Gabrielov prove that it doesn't.)
The main result of~\cite{EG} is that there, for any $(z_1, z_2, \ldots, z_r)$ in $\Mzero{r}(\RR)$, and any noncrossing matching, there is precisely one element of $\cS_0$ where $W$ has the topology given by that matching.

\begin{theorem} \label{eg match}
Let $\tau$ be the maximal face of $\cS$ containing $w$; let $\gamma$ be the cylindrical growth diagram indexing $\tau$; let $M$ be the corresponding noncrossing matching.
Then $N \cap W$ has the same topology as $M$.
\end{theorem}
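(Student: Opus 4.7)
My plan is to combine Lemma~\ref{recursion} with a continuity argument across a caterpillar degeneration, identifying the Schubert condition at each node with a local topological invariant of $W \cap N$.

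First, since $W_w$ varies continuously with $w$ and matchings form a discrete set, $M_w$ is locally constant on $\cS_0$; as $\tau \cap \cS_0$ is connected, $M_w$ is constant there, and I call the common matching $M$. Writing $M_\gamma$ for the noncrossing matching associated with $\gamma$ via the White bijection, the goal is $M = M_\gamma$. By Lemma~\ref{recursion} a cylindrical growth diagram is determined by its restriction to any single path through $\II$, so it suffices to verify the formula $\gamma_{0, k} = (s_k + t_k, s_k)$ along the path $\delta = ((0, 0), (0, 1), \ldots, (0, r))$, where $s_k$ (respectively $t_k$) is the number of arcs of $M$ with both endpoints (respectively exactly one endpoint) in the cyclic interval $\{s(0), s(1), \ldots, s(k-1)\}$. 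By Lemma~\ref{constant label}, $\gamma_{0, k}$ is the partition labelling the node on the side away from $[0, k)$ at any boundary point of $\tau$ where $[0, k)$ has been collected onto a single bubble $B$.

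I will then pick a path $w(\eta) \in \tau$ with $w(\eta) \in \cS_0$ for $\eta < 1$ and $w(1)$ on the boundary facet above, and study the limit $\eta \to 1$. The rational map $\phi_{w(\eta)}$ varies continuously, and $W_{w(\eta)}$ converges in Hausdorff distance to $\phi_{w(1)}^{-1}(\RR\PP^1)$ on the nodal curve. Each arc of $M$ persists in the limit: arcs internal to $[0, k)$ become arcs in the upper hemisphere of $B$, while bridging arcs split into pairs of half-arcs terminating at the node $\nu$ of $B$. Thus the restriction $\psi := \phi_{w(1)}|_B$ has $k$ simple critical points at the marked points of $B$ and exactly $t_k$ arcs of $\psi^{-1}(\RR\PP^1)$ emanating from $\nu$ into the upper hemisphere.

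A local-coordinate computation then converts these topological data into Schubert data. If $\psi$ has multiplicity $m$ at $\nu$, then $\psi - \psi(\nu) = c w^m + O(w^{m+1})$ near $\nu$, so $\psi^{-1}(\RR\PP^1)$ consists locally of $m$ real lines through $0$---one being $\RR\PP^1$ itself---and each of the other $m - 1$ contributes a single upper-hemisphere arc, giving $m = t_k + 1$. Riemann--Hurwitz then forces $\deg \psi = (k + m + 1)/2 = s_k + t_k + 1$. On the other hand, writing $(a, b) = \gamma_{0, k}^C$ for the Schubert partition at $\nu$ on the $B$-side, the defining vanishing conditions (every element of $V = \langle p, q \rangle$ vanishes to order $\geq b$ at $\nu$, one element to order $\geq a + 1$) yield $\deg \psi = n - 1 - b$ and $m = a - b + 1$ after extracting common factors from $\psi = p/q$. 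Solving gives $(a, b) = (n - 2 - s_k,\, n - 2 - s_k - t_k)$, whose complement in $\Lambda(2, n)$ is $(s_k + t_k, s_k) = \gamma_{0, k}$, as desired.

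The main technical hurdle will be making the Hausdorff-continuity of $W_{w(\eta)}$ at $\eta = 1$ rigorous: I must rule out arcs colliding, disappearing, or spontaneously appearing in the limit. This should follow from the properness of $\cS \to \Mbar{r}$ (Theorem~\ref{thm Schub family}), preservation of $\deg \phi$, and the fact that each $z_i$ remains a simple critical point throughout the degeneration, so the total count of arcs of $W_w \cap N$ is constant at $r/2$ and no cancellations are possible.
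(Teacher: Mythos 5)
Your proposal is correct in outline, and the computations check out (I verified the local model at the node, the Riemann--Hurwitz count, and the final solve for $(a,b)$ against White's formula $\gamma_{0k}=(s_k+t_k,s_k)$), but it runs in the opposite direction from the paper's proof and uses a different degeneration. The paper fixes a pair $(i,j)$ matched by $M_\gamma$ --- detected by $(\gamma_{(i+1)j},\gamma_{i(j+1)})=((s,s),(s+1,s+1))$ --- and degenerates to a codimension-two stratum with a \emph{three}-component curve isolating $z_i,z_j$ on a middle component; the Schubert data forces the map there to be the unique degree-$2$ map with critical points $z_i,z_j$, whose net visibly contains two arcs from $z_i$ to $z_j$ avoiding the nodes, and these persist under smoothing; a final count ($M_\gamma$ and $N\cap W$ both have $n-2$ arcs) closes the argument. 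You instead degenerate each cyclic interval $[0,k)$ onto a single bubble and read the node label off the ramification of $\psi$ at the node, which you in turn read off from the number of bridging arcs of $W$; Lemma~\ref{recursion} then pins down all of $\gamma$ from one path. What your route buys is that it \emph{derives} White's formula from the geometry rather than merely verifying consistency with it, and it needs no closing counting step; what the paper's route buys is that it only ever tracks arcs in a region where the geometry is not degenerating (away from the nodes), whereas you must control the limit of $W$ precisely at the pinching neck --- ruling out a bridging arc limiting onto $\nu$ degenerately, or two bridging arcs coalescing --- which is the more delicate half of the degeneration. You correctly flag this as the main hurdle; note that the paper's own proof makes an unproved persistence claim of the same nature, so your level of rigor here is comparable, and note also that your Riemann--Hurwitz step is dispensable: the second equation $a+b=2(n-2)-k$ is already forced by $|\gamma_{0k}|=k$ (Proposition~\ref{dimension equality}), so you only need the single input $m-1=t_k$ from the topology.
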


\begin{proof}
We will show that, if $((\gamma_{(i+1)j}, \gamma_{i(j+1)})$ is of the form $((s,s), (s+1, s+1))$, then $W$ contains an arc joining $z_i$ and $z_j$. By the previous observations, there is $\gamma_{ij}$ is of the form $(s,s)$ if and only if there is an arc of $M$ from $i$ to $j$, and we know that $M$ and $N \cap W$ both have $r/2 =n-2$ arcs, so this will show that $M$ and $N \cap W$ have the same connectivity.

So, let $((\gamma_{(i+1)j}, \gamma_{i(j+1)})$ be of the form $((s,s), (s+1, s+1))$. Note that the topology of $W$ is constant on connected components of $\cS_0$. So we can let $w$ approach the codimension $2$ face of $\tau$ corresponding to stable curves with three components: one which contains $\{ z_k: k \in (i,j) \}$, one which contains $\{ z_\ell: \ell \in (j,i) \}$ and one, the middle component, which contains $z_i$ and $z_j$. Call these components $C_1$, $C_2$, $C_3$.
In the limit where $w$ lies in this codimension $2$ face, we can consider $w$ as a point in $G(2,n)_{C_1} \times G(2,n)_{C_2} \times G(d,n)_{C_3}$, as in Section~\ref{sec families}. 

Let $w_2$ be the $C_2$ component of $w$. As above, let $w_2$ be spanned by $p_2$ and $q_2 \in \RR[x]_{\leq n-1}$.
Choose coordinates on $C_2$ such that the node joining $C_1$ and $C_2$ is at $0$, and the node joining $C_2$ and $C_3$ at $\infty$. 
The condition $\gamma_{(i+1)j}=(s,s)$ means that $p_2$ and $q_2$ vanish to order $s$ at $0$; the condition $\gamma_{i(j+1)}=(s+1, s+1)$ means that $\deg p_2=\deg q_2 =s+2$. So $\phi_2=p_2/q_2$ is the unique degree $2$ rational map with critical points $z_i$ and $z_j$. 
For this rational function, $\phi_2^{-1}(\RR \PP^1)$ is four arcs joining $z_i$ and $z_j$; two of them passing through $0$ and $\infty$.

When we move $w$ off of the codimension $2$ face, so that the node smooths, the two arcs from $z_i$ to $z_j$ which avoid the nodes remain.
So $W$ has an arc from $z_i$ to $z_j$, as desired.
\end{proof}

\begin{remark}
One might hope for a similar construction in the case $d=3$. Kuperberg~\cite{Kup} defines $A_2$ webs, the number of which is the same as the number of standard Young tableaux of shape $3 \times (n-3)$; these $A_2$ webs have rotational symmetry, just as noncrossing matchings do.
Khovanov-Kuperberg~\cite{KK} present a bijection from webs to Young tableaux. Translated into a bijection from webs to cylindrical growth diagrams, the map of~\cite{KK} is the following: Take a minimal cut path $\delta$ from $i$ to $j$, as defined in~\cite[Section 5]{KK}. Then $\gamma_{ij} = (s+t+u, s+t, s)$ where $s$ is the number of arcs crossing $\delta$ from left to right, $u$ is the number of arcs crossing $\delta$ from right to left, and $t$ is determined by the condition $j-i = 3s+2t+u$.
It would be fascinating to find a geometric interpretation of webs similar to the interpretation Eremenko and Gabrielov find for non-crossing matchings.
\end{remark}

\section{Dual equivalence} \label{sec:dual}

In the last section, we gave an explicit indexing set for the facets of $\cS(\square, \square, \cdots, \square)$ lying over a fixed facet of $\Mbar{r}$.
In the next section, we will generalize this to an arbitrary Schubert problem $\cS(\lambda_1, \lambda_2, \dots, \lambda_r)$ with $\sum |\lambda_i| = d(n-d)$. 
Our generalization uses the language of dual equivalence, so we rapidly review this topic.
Our reference is~\cite{Haiman}

Let $\JJ$ be a a subset of $\ZZ^2$ of the form $[a,b] \times [c,d]$. 
A \newword{growth diagram} is a map $\gamma: \JJ \to \Lambda$ which satisfies conditions~(1) and~(2) in the definition of a cylindrical growth diagram. In other words, a growth diagram is a possible rectangular region within a cylindrical growth diagram.
(Historically, growth diagrams are the standard notion; the term ``cylindrical growth diagram" is due to this paper.)

Note that, if we know the partitions $\gamma_{ij}$ for $(i,j) \in \{ b \} \times [c,d] \cup [a,b] \times \{d \}$, then all of the others are recursively determined.
We introduce terminology for this idea: Let $\mu \subset \nu \subset \pi$ be a chain of partitions, let $\delta_{\bullet} \in \SYT(\nu/\mu)$ and $\epsilon_{\bullet} \in \SYT(\pi/\nu)$. 
Set $ m = |\mu|$, $n = |\nu|$ and $p = |\pi|$, so $\delta$ and $\epsilon$ are chains of length $n-m$ and $p-n$.
Let $\gamma: [-(p-n),0] \times [m,n] \to \Lambda$ be the unique growth diagram
with $\gamma_{0(m+j)} = \delta_j$ and $\gamma_{(-i)n} = \epsilon_i$.
Let $\delta'$ be the tableau $\delta'_j =\gamma_{(n-p) (m+j)}$ and let $\epsilon'$ be  the tableau $\epsilon'_i = \gamma_{(-i)0}$.
So $\delta' \in \SYT(\pi/\nu')$ and $\epsilon' \in \SYT(\nu'/\mu)$ for some $\nu'$. 
We say that $(\epsilon', \delta')$ is the result of \newword{shuffling} $(\delta, \epsilon)$.
The growth diagram recursion is symmetric in switching $-i$ and $j$, so shuffling $(\epsilon', \delta')$ restores the original $(\delta, \epsilon)$.

We define two equivalence relations called \newword{slide equivalence} and \newword{dual equivalence}.

\newword{Slide equivalence} is the finest equivalence relation satisfying the following: If $(\delta, \epsilon)$ shuffles to $(\epsilon', \delta')$, then $\delta$ is slide equivalent to $\delta'$ and $\epsilon$ is slide equivalent to $\epsilon'$. 

Two dual equivalent tableaux will always have the same shape. \newword{Dual equivalence} is the coarsest equivalence relation on $\SYT(\nu/\mu)$ satisfying the following: Let $\delta_1$ and $\delta_2 \in \SYT(\nu/\mu)$ be dual equivalent, and let $\epsilon$ be any element of $\SYT(\pi/\nu)$, for any $\pi \supseteq \nu$. Write $(\epsilon'_1, \delta'_1)$ and $(\epsilon'_2, \delta'_2)$ for the results of shuffling $\epsilon$ past $\delta_1$ and $\delta_2$. Then $\epsilon'_1 = \epsilon'_2$.  
Conceptually, ``shuffling past two dual equivalent tableaux has the same effect".

The following are the key results about dual and slide equivalence:

\begin{prop}[{\cite[Theorem 2.13]{Haiman}}] \label{thing1}
If two tableaux are both dual and slide equivalent, then they are equal.
\end{prop}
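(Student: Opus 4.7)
The plan is to combine the two definitions by first establishing a symmetric companion to the defining property of dual equivalence, then running an induction on a slide-equivalence chain. The auxiliary lemma I would prove first states: if $\delta_1$ and $\delta_2$ are dual equivalent and a common tableau $\epsilon$ is shuffled past each of them, producing $(\epsilon'_1, \delta'_1)$ and $(\epsilon'_2, \delta'_2)$, then $\delta'_1$ and $\delta'_2$ are themselves dual equivalent. (The statement $\epsilon'_1 = \epsilon'_2$ is already the definition.) Informally, dual equivalence is preserved by shuffling on the ``$\delta$-side'' just as readily as on the ``$\epsilon$-side.''

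This auxiliary lemma should follow from the symmetry of a rectangular growth diagram under its $180^{\circ}$ rotation, which exchanges the $\delta$ edge with the $\delta'$ edge and likewise for $\epsilon$ and $\epsilon'$. Concretely, to check that shuffling an arbitrary $\eta$ past $\delta'_1$ and $\delta'_2$ yields the same output, one assembles a larger growth diagram incorporating both the original shuffle and the new one, invokes the associativity of shuffling (that shuffling in stages and shuffling all at once agree), and reduces the claim to the defining hypothesis that shuffling a suitably enlarged tableau past $\delta_1$ and past $\delta_2$ gives identical output.

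Granted the auxiliary lemma, I would induct on the length $k$ of a shortest slide-equivalence chain $\delta_1 = \alpha_0, \alpha_1, \ldots, \alpha_k = \delta_2$. The base case $k = 0$ is immediate. For $k \geq 1$, factor off the first shuffle $(\delta_1, \epsilon) \mapsto (\epsilon', \alpha_1)$; shuffling the same $\epsilon$ past $\delta_2$ produces $(\epsilon', \beta)$ by dual equivalence, and $\alpha_1$ is dual equivalent to $\beta$ by the auxiliary lemma. The cleanest way to close the induction is to reduce to the straight-shape case by coordinating the auxiliary shuffles so that $\delta_1$ and $\delta_2$ are simultaneously transported to straight-shape tableaux, where slide equivalence coincides with equality (each slide-equivalence class contains a unique straight-shape tableau, namely its rectification). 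The main obstacle throughout is the auxiliary lemma: the rest is formal bookkeeping once one knows that dual equivalence commutes with shuffling in the required sense.
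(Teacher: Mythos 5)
This proposition is not proved in the paper at all: it is imported verbatim from Haiman (his Theorem~2.13), so there is no in-paper argument to compare against. What you have written is essentially a reconstruction of Haiman's own proof, and the overall strategy --- transport both tableaux to straight shape by a common sequence of shuffles, invoke uniqueness of the straight-shape representative of a slide class, and pull back --- is the correct one. Two points need more care before this is a proof rather than a plan. First, your auxiliary lemma is itself Proposition~\ref{thing3} of the paper, which is likewise cited from Haiman rather than proved; it carries most of the weight. Your associativity argument for it is sound in the ``forward'' direction (shuffle $\epsilon$ then $\eta$ in stages versus all at once, and use the defining property of dual equivalence on the concatenation), but note that the definition of dual equivalence used here is one-sided: it quantifies only over tableaux $\epsilon$ sitting \emph{above} $\delta$, whereas rectifying $\delta$ means shuffling it past a tableau sitting \emph{below}. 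So to ``coordinate the auxiliary shuffles'' toward straight shape you need the backward versions of both the defining property and the auxiliary lemma. That is exactly what your appeal to the $180^{\circ}$ rotation of the growth diagram must deliver, and it deserves an explicit argument rather than a mention.

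Second, the closing step is stated too quickly. Knowing that the two straight-shape images $R_1$ and $R_2$ coincide does not by itself give $\delta_1 = \delta_2$; equality downstream must be propagated upstream. This works because dual equivalence forces the companion outputs of the coordinated shuffles to coincide as well, so the \emph{entire} shuffled pairs are equal, and shuffling is invertible (as the paper notes, shuffling $(\epsilon', \delta')$ restores $(\delta, \epsilon)$). You should say this explicitly. Finally, there is no circularity in quoting uniqueness of the straight-shape representative (Proposition~\ref{thing4}): in Haiman's development that fact is established from Proposition~\ref{thing2} before his Theorem~2.13, so it is legitimately available to you here.
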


\begin{prop}[{\cite[Corollary 2.5]{Haiman}}]  \label{thing2}
Any two tableaux in $\SYT(\mu/\emptyset)$ are dual equivalent to each other. 
\end{prop}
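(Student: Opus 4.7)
The plan is to identify the shuffling operation with a sequence of jeu de taquin inner slides and then invoke Sch\"utzenberger's classical confluence theorem for rectification.

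I would start by unpacking what shuffling computes in our setup. Given $T\in\SYT(\mu/\emptyset)$ and $\epsilon\in\SYT(\pi/\mu)$, the growth diagram $\gamma$ is built on a rectangle with $T$ along the top edge and $\epsilon$ along the right edge, and $\epsilon'$ is read off the left edge. Reading the recursion column by column from right to left, the passage from column $j+1$ to column $j$ is controlled by the single box $T_j \setminus T_{j-1}$ on the top edge. A direct check against axioms (1) and (2) shows that each such column transition is exactly one inner jeu de taquin slide of the current skew tableau, starting from the inner corner specified by the new box of $T$. Iterating over the $M = |\mu|$ columns, the diagram computes the rectification of $\epsilon$ via the inner-slide sequence encoded by $T$, and the left edge of $\gamma$ recovers the resulting straight-shape tableau.

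Now I invoke the confluence theorem: jeu de taquin rectification of a skew tableau is independent of the sequence of inner slides used. Since any $T \in \SYT(\mu)$ gives a valid sequence of inner slides emptying $\mu$, two different $T_1, T_2 \in \SYT(\mu)$ produce the same rectification $\epsilon'$. This is exactly the dual equivalence of $T_1$ and $T_2$.

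The main obstacle is the column-by-column identification between growth-recursion steps and jeu de taquin slides, which is careful bookkeeping against the growth axioms but not really a substantial difficulty once set up. An alternative self-contained route would use that $\SYT(\mu)$ is connected by elementary transpositions (swaps of entries $k, k+1$ whose cells share neither a row nor a column), reduce to showing a single such swap is a dual equivalence, and then directly track how the localized ``non-adjacent two-box'' discrepancy propagates and is absorbed at the boundary of the growth diagram. This route avoids explicit appeal to jeu de taquin but requires the same style of local analysis using axiom (2), namely that in a non-adjacent configuration the recursion pins down a unique flipped value.
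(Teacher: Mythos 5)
The paper offers no proof of this proposition: it is imported verbatim from Haiman's Corollary~2.5, whose argument runs through \emph{elementary dual equivalences} --- one first shows that swapping two entries $k,k+1$ occupying cells in different rows and different columns is a dual equivalence (Haiman's Lemma~2.3, proved by reducing to ``miniature'' tableaux with at most three cells), and then uses the fact that $\SYT(\mu)$ for straight $\mu$ is connected under such swaps. Your second, sketched route is exactly this, so it is the faithful one. Your primary route --- identify shuffling along a growth diagram with a sequence of inner jeu de taquin slides dictated by $T$, then invoke confluence of rectification --- is mathematically sound: the local growth rule is the standard Fomin encoding of a slide, and under the paper's definition of dual equivalence the statement to be proved is literally that $\epsilon'$ is independent of which straight-shape $T$ is shuffled past $\epsilon$, which confluence delivers. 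But be aware of the logical inversion: in this paper confluence \emph{is} Proposition~\ref{thing4}, and the paper (following Haiman) derives Proposition~\ref{thing4} \emph{from} Proposition~\ref{thing2}. Splicing your primary argument into this development would therefore be circular unless you import an independent proof of confluence (Sch\"utzenberger's original argument, or the proof via Knuth equivalence and the plactic monoid). What Haiman's route buys is precisely that dual equivalence becomes a self-contained engine that \emph{proves} the fundamental theorems of jeu de taquin rather than resting on them; what your route buys is brevity, at the cost of treating confluence as an external classical input.
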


\begin{prop} \label{thing3}
If $\delta_1$ and $\delta_2$ are dual equivalent, and $(\epsilon', \delta'_1)$, $(\epsilon', \delta'_2)$ are the results of shuffling the $\delta_i$ with some $\epsilon$, then $\delta'_1$ and $\delta'_2$ are dual equivalent.
\end{prop}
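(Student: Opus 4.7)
The plan is to verify the defining property of dual equivalence directly. Fix any $\zeta \in \SYT(\rho/\pi)$ with $\rho \supseteq \pi$; I must show that shuffling $\zeta$ past $\delta'_1$ and past $\delta'_2$ produces the same output. Writing $(\zeta'_i, \delta''_i)$ for the result of shuffling $(\delta'_i, \zeta)$, my goal is $\zeta'_1 = \zeta'_2$.

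The main tool will be a commutativity lemma for shuffles of three stacked tableaux. Given $\delta \in \SYT(\nu/\mu)$, $\epsilon \in \SYT(\pi/\nu)$ and $\zeta \in \SYT(\rho/\pi)$, there are two natural ways to carry out two successive shuffles. In order (a), first shuffle $(\delta, \epsilon) \to (\epsilon', \delta')$ and then $(\delta', \zeta) \to (\zeta', \delta'')$. In order (b), first shuffle $(\epsilon, \zeta) \to (\zeta^*, \epsilon^*)$ and then $(\delta, \zeta^*) \to ((\zeta^*)', \delta^+)$. I claim the two procedures produce the same ``leftmost'' tableau: $\zeta' = (\zeta^*)'$. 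The proof is that both orderings amount to filling in a single growth diagram on a rectangle whose right-hand boundary path is the concatenation of $\delta$, $\epsilon$, and $\zeta$; since the growth diagram is uniquely determined by its boundary data (the rectangular version of Lemma~\ref{recursion}), the outputs of the two orderings are simply readings of the same diagram along the same segment of the opposite boundary.

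Applying this lemma with $\delta = \delta_i$ gives $\zeta'_i = (\zeta^*)'_i$. In order (b), the first shuffle $(\epsilon, \zeta) \to (\zeta^*, \epsilon^*)$ does not involve $\delta_i$, so $\zeta^* \in \SYT(\pi^*/\nu)$ is independent of $i$. The second shuffle $(\delta_i, \zeta^*) \to ((\zeta^*)'_i, \delta^+_i)$ is, by definition, a shuffle of the fixed tableau $\zeta^*$ past $\delta_i$. Since $\delta_1$ and $\delta_2$ are dual equivalent, the defining property forces $(\zeta^*)'_1 = (\zeta^*)'_2$. Combining, $\zeta'_1 = (\zeta^*)'_1 = (\zeta^*)'_2 = \zeta'_2$, as required.

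The main obstacle is the commutativity lemma. Conceptually it is merely the observation that a growth diagram on a rectangle is determined by any two adjacent sides, but making this precise requires setting up the rectangle so that both orderings appear as valid computational subdivisions. One has to track the intermediate shape $\nu'$ produced when $\delta$ is shuffled past $\epsilon$ in order (a), as well as the shape $\pi^*$ produced when $\epsilon$ is shuffled past $\zeta$ in order (b), and verify that the growth-diagram recursion ties the inputs to the outputs at the positions I claim. The underlying reason this works is the symmetry of the local $2 \times 2$ rule built into the definition of a growth diagram, which makes the order in which the cells are filled irrelevant.
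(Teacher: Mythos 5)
Your overall strategy---proving the proposition by verifying the defining property of dual equivalence for $\delta'_1,\delta'_2$ directly, via an ``associativity'' statement for composed shuffles---is a genuinely different route from the paper, which simply cites Haiman, and it can be made to work. However, the commutativity lemma you rely on is false as stated. In order (a), $\zeta'$ is the first output of shuffling $(\delta',\zeta)$, so it is a tableau of some shape $\sigma/\nu'$, where $\nu'$ is the shape reached by $\epsilon'$; in order (b), $(\zeta^*)'$ is the first output of shuffling $(\delta,\zeta^*)$, so it has some shape $\sigma'/\mu$. Unless $\epsilon$ is empty, $\nu'\neq\mu$, so the claimed equality $\zeta'=(\zeta^*)'$ cannot hold: the two tableaux do not even have the same inner shape. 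The reason is that your two orderings do \emph{not} fill in the same rectangle. Order (a) computes the growth diagram on the rectangle whose right column is $\delta$ and whose top row is the concatenation of $\epsilon$ and $\zeta$ (your two sub-shuffles are its right and left halves); order (b) computes the growth diagram on the rectangle whose right column is the concatenation of $\delta$ and $\epsilon$ and whose top row is $\zeta$ (your two sub-shuffles are its top and bottom halves). These rectangles have different dimensions, and their opposite boundaries carry different data.

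The argument is repaired by using the order-(a) subdivision alone and discarding order (b). Let $\epsilon\zeta\in\SYT(\rho/\nu)$ denote the concatenated chain. Because the growth-diagram recursion is local and deterministic, the rectangle with right column $\delta_i$ and top row $\epsilon\zeta$ restricts on its right half to the shuffle of $(\delta_i,\epsilon)$ and on its left half to the shuffle of $(\delta'_i,\zeta)$; hence the first output of shuffling $(\delta_i,\epsilon\zeta)$ is the concatenation $\epsilon'_i\zeta'_i$. Applying the definition of dual equivalence to $\delta_1\sim\delta_2$ with the single tableau $\epsilon\zeta$ gives $\epsilon'_1\zeta'_1=\epsilon'_2\zeta'_2$, and since a concatenation of chains with fixed lengths determines its two halves, $\zeta'_1=\zeta'_2$. (Note also that $\epsilon'_1=\epsilon'_2$ guarantees $\delta'_1$ and $\delta'_2$ have the same shape $\pi/\nu'$, so the conclusion is meaningful.) As $\zeta$ was arbitrary, this is exactly the statement that $\delta'_1$ and $\delta'_2$ are dual equivalent.
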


\begin{proof}
Immediate corollary of \cite[Lemma 2.3 and Theorem 2.6]{Haiman}. 
\end{proof}

\begin{prop} \label{thing4}
Any slide equivalence class contains a unique tableaux $\delta$ of straight shape.
\end{prop}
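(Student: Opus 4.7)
The plan is to handle existence and uniqueness of the straight-shape representative separately.

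For existence, given $\delta \in \SYT(\nu/\mu)$, I would induct on $|\mu|$. When $|\mu| = 0$, $\delta$ is already of straight shape. For the inductive step I pick an outer corner $c$ of $\mu$ (a cell whose removal still yields a partition) and realize a single jeu de taquin slide at $c$ as a shuffling operation in the growth-diagram language: build the unique growth diagram on a thin rectangle whose top edge records the chain from $\mu \setminus \{c\}$ to $\mu$ followed by $\delta$, and read off the bottom edge to obtain a skew tableau $\delta' \in \SYT(\nu'/(\mu \setminus \{c\}))$. The defining property of shuffling then gives $\delta \sim \delta'$, with strictly smaller inner shape, and induction concludes.

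For uniqueness, let $\delta_1, \delta_2$ be two straight-shape tableaux in the same slide-equivalence class. Slide equivalence preserves the total number of boxes, so $|\delta_1| = |\delta_2|$. The real content is to show $\sh(\delta_1) = \sh(\delta_2)$: once that is known, Proposition~\ref{thing2} makes $\delta_1$ and $\delta_2$ dual equivalent and Proposition~\ref{thing1} then forces $\delta_1 = \delta_2$, completing the proof.

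To establish the shape equality, I would define the \emph{rectification shape} $\sigma(\delta)$ of a skew tableau as the shape of the straight-shape tableau produced by iterating the existence procedure, and argue that $\sigma$ is constant on slide-equivalence classes. Because slide equivalence is generated by single shuffles, it suffices to check that a shuffle $(\delta, \epsilon) \mapsto (\epsilon', \delta')$ satisfies $\sigma(\delta) = \sigma(\delta')$ and $\sigma(\epsilon) = \sigma(\epsilon')$. I would verify this by embedding the shuffle as a $2 \times k$ sub-rectangle of a larger growth diagram whose opposite corner records $\sigma(\delta)$; since a growth diagram is uniquely determined by any one of its boundary edges, this far-corner shape cannot depend on the order in which the constituent shuffles are carried out. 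Since $\delta_1$ and $\delta_2$ are already of straight shape, $\sigma(\delta_i) = \sh(\delta_i)$, and the desired shape equality follows.

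The main obstacle is precisely this shape-invariance lemma---essentially the classical well-definedness of jeu de taquin rectification recast in the growth-diagram language. The growth-diagram viewpoint makes the statement transparent (all ``paths'' through a rectangular diagram agree at the far boundary), but writing a careful invariance proof requires some bookkeeping to identify the corner that records the rectification shape and to verify that it genuinely depends only on the slide-equivalence class of the input. Once that lemma is in hand, the remainder is a short appeal to Propositions~\ref{thing1} and~\ref{thing2}.
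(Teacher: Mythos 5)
Your existence argument and your final reduction (same shape $\Rightarrow$ dual equivalent by Proposition~\ref{thing2} $\Rightarrow$ equal by Proposition~\ref{thing1}) are both fine, but the step you yourself identify as the crux --- the invariance of the rectification shape $\sigma$ --- is not established by the argument you give, and that step is where the entire content of the ``first fundamental theorem'' lives. The determinism of the growth-diagram recursion says that a \emph{single} rectangular growth diagram, once its corner data are fixed (two adjacent boundary edges, not ``any one'' edge), has a determined opposite corner. But two different rectification sequences for $\delta \in \SYT(\nu/\mu)$ correspond to two different choices of inner tableau $A_1, A_2 \in \SYT(\mu)$, hence to two \emph{different} growth diagrams with different boundary data; computing one fixed diagram in different orders says nothing about whether the far corners of two distinct diagrams agree. (Relatedly, before showing $\sigma$ is a slide invariant you would need to know $\sigma$ is well defined at all, since your existence procedure chooses a corner of $\mu$ at every step --- the same issue in disguise.) As written, your key lemma is the well-definedness of jeu de taquin rectification restated, not proved.

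The repair --- which is the paper's one-line proof, following Haiman --- is to invoke Proposition~\ref{thing2} at this earlier stage rather than only at the end: any two $A_1, A_2 \in \SYT(\mu)$ are dual equivalent, and by the very definition of dual equivalence, shuffling $\delta$ past $A_1$ and past $A_2$ yields the \emph{same} inner tableau, not merely the same shape. This makes the rectified tableau itself well defined. One then runs exactly your chain reduction: a single shuffle $(\delta,\epsilon) \mapsto (\epsilon',\delta')$ preserves the rectification, because $\delta'$ may be rectified using an inner tableau of the form $A$ followed by $\epsilon'$, whose first stage recovers $\delta$. Since a straight-shape tableau is its own rectification, uniqueness follows directly, and Proposition~\ref{thing1} is not needed for this statement at all.
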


\begin{proof}
This is the ``first fundamental theorem of \emph{jeu de taquin}''. As Haiman sketches in the final paragraphs of~\cite[Section 2]{Haiman}, this follows from Proposition~\ref{thing2}.
\end{proof}

 Given an arbitrary tableaux $\epsilon$, we will say that $\epsilon$ \newword{rectifies} to $\delta$ if $\delta$ is the unique tableaux of straight shape in its slide equivalence class. In this context, we will write $\rsh(\epsilon)$ for $\sh(\delta)$. If $\epsilon_1$ and $\epsilon_2$ are dual equivalent, then $\rsh(\epsilon_1) = \rsh(\epsilon_2)$.

\begin{prop} \label{thing5}
Given $\epsilon \in \SYT(\mu/\emptyset)$, and $\delta$ a tableaux with $\rsh(\delta) = \mu$, there is a unique tableaux $\phi$ which is slide equivalent to $\epsilon$ and dual equivalent to $\delta$.
\end{prop}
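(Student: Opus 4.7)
The plan is to construct $\phi$ by two shuffles and then appeal to Proposition~\ref{thing1} for uniqueness. Write $\sh(\delta) = \nu/\kappa$, so that $|\nu|-|\kappa| = |\mu|$. If $\kappa = \emptyset$, then $\delta$ itself is of straight shape $\mu$, and I would simply take $\phi = \epsilon$: it is dual equivalent to $\delta$ by Proposition~\ref{thing2} and trivially slide equivalent to itself. So I will assume $\kappa \neq \emptyset$ and pick any $\alpha \in \SYT(\kappa/\emptyset)$.

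For existence, I first shuffle $(\alpha, \delta)$ to get $(\delta^{\mathrm{rect}}, \alpha')$, where $\delta^{\mathrm{rect}} \in \SYT(\mu/\emptyset)$ (this is the shuffling description of rectification, with $\rsh(\delta) = \mu$ by hypothesis) and $\alpha' \in \SYT(\nu/\mu)$. Next I shuffle $(\epsilon, \alpha')$ to get $(\alpha'', \phi)$, with $\alpha'' \in \SYT(\sigma/\emptyset)$ and $\phi \in \SYT(\nu/\sigma)$ for some intermediate $\sigma$. By Proposition~\ref{thing2}, the straight-shape tableaux $\delta^{\mathrm{rect}}$ and $\epsilon$ are dual equivalent. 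The defining property of dual equivalence asserts that shuffling $\alpha'$ past either of them produces the same lower tableau; since shuffling is involutive, shuffling $\alpha'$ past $\delta^{\mathrm{rect}}$ recovers $\alpha$, so $\alpha'' = \alpha$. Consequently $\sigma = \kappa$ and $\phi \in \SYT(\nu/\kappa) = \SYT(\sh(\delta))$. Proposition~\ref{thing3} then yields that $\phi$ is dual equivalent to $\delta$, and the single shuffle $(\epsilon, \alpha') \leftrightarrow (\alpha'', \phi)$ makes $\phi$ slide equivalent to $\epsilon$ directly from the definition of slide equivalence.

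For uniqueness, I will suppose $\phi_1$ and $\phi_2$ both satisfy the conclusion. By transitivity they are dual equivalent to each other and also slide equivalent to each other, so Proposition~\ref{thing1} gives $\phi_1 = \phi_2$. The main delicate step in the plan is the shape bookkeeping in the second shuffle: one must confirm a priori that $\phi$ lands in $\SYT(\sh(\delta))$ rather than in some other skew shape. This is precisely what the dual-equivalence identity $\alpha'' = \alpha$ provides, and it is the reason Proposition~\ref{thing2} is invoked to start the construction.
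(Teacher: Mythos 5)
Your proposal is correct and follows essentially the same route as the paper: choose $\alpha \in \SYT(\kappa)$, shuffle it past $\delta$ to produce the rectification and a tableau of shape $\nu/\mu$, shuffle that past $\epsilon$, and invoke Propositions~\ref{thing2} and~\ref{thing3} for dual equivalence and Proposition~\ref{thing1} for uniqueness. The observation that $\alpha'' = \alpha$ (so $\phi$ has shape $\sh(\delta)$) is a nice explicit touch, though it is not strictly needed since Proposition~\ref{thing3} already forces $\phi$ to share $\delta$'s shape.
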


\begin{proof}
Uniqueness is an immediate consequence of Proposition~\ref{thing1}; we must show existence. 

Let $\sh(\delta) = \nu/\lambda$. Choose $\alpha \in \SYT(\lambda)$. Let $(\epsilon_2, \beta)$ be the result of shuffling $(\alpha, \delta)$. 
So $\sh(\epsilon_2) = \rsh(\delta) = \mu$ and thus $\sh(\beta) = \nu/\mu$; we see that it makes sense to shuffle $(\epsilon, \beta)$.
%By Proposition~\ref{thing2}, $\epsilon$ and $\epsilon_2$ are dual equivalent. 
Let the result of shuffling $(\epsilon, \beta)$ be $(\psi, \phi)$.
By definition, $\epsilon$ is slide equivalent to $\phi$.
By Proposition~\ref{thing2}, $\epsilon$ and $\epsilon_2$ are dual equivalent so, by Proposition~\ref{thing3}, $\delta$ and $\phi$ are dual equivalent.
\end{proof}

We now introduce shuffling of dual equivalence classes.

\begin{prop} \label{de shuff}
Let $\delta \in \SYT(\nu/\mu)$ and let $\epsilon \in \SYT(\pi/\nu)$, for some $\mu \subset \nu \subset \pi$. Let $(\delta, \epsilon)$ shuffle to $(\epsilon', \delta')$.
The dual equivalence classes of $\delta'$ and $\epsilon'$ are determined by the dual equivalence classes of $\delta$ and $\epsilon$.
\end{prop}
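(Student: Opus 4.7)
The plan is to reduce the statement by transitivity to the two cases where exactly one of $\delta,\epsilon$ varies within its dual equivalence class, and to handle each case separately.

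The first case, with $\delta_1 \sim_{de} \delta_2$ and a common $\epsilon$, is immediate: the defining property of dual equivalence gives $\epsilon'_1 = \epsilon'_2$ (hence in particular dual equivalent), and Proposition~\ref{thing3} gives $\delta'_1 \sim_{de} \delta'_2$.

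The second case, with a common $\delta$ and $\epsilon_1 \sim_{de} \epsilon_2$, is the heart of the matter, and my plan here is to establish the stronger symmetric statement $\delta'_1 = \delta'_2$ and $\epsilon'_1 \sim_{de} \epsilon'_2$. To prove $\delta'_1 = \delta'_2$, I would invoke Proposition~\ref{thing1}: since each $\delta'_i$ is slide equivalent to the common $\delta$ by definition of shuffling, it is enough to show $\delta'_1 \sim_{de} \delta'_2$. To verify that dual equivalence by the defining condition, choose any test tableau $\alpha \in \SYT(\pi^{+}/\pi)$ and consider the triple $(\delta,\epsilon_i,\alpha)$, which determines a larger growth diagram. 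In one order of shuffling, shuffle $(\delta,\epsilon_i)$ first to obtain $(\epsilon'_i,\delta'_i)$ and then shuffle $(\delta'_i,\alpha)$; in the other order, shuffle $(\epsilon_i,\alpha)$ first, where $\epsilon_1 \sim_{de} \epsilon_2$ forces the result on the $\alpha$-side to be $i$-independent, and then shuffle the common $\delta$ past this common intermediate, giving an entirely $i$-independent chain. Reading off the two orderings as different monotone paths through the same growth diagram and using Proposition~\ref{thing3} to propagate dual equivalence across the final shuffle yields the desired $\delta'_1 \sim_{de} \delta'_2$, and hence $\delta'_1 = \delta'_2 =: \delta'$ by Proposition~\ref{thing1}.

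Given $\delta'_1 = \delta'_2$, the remaining claim $\epsilon'_1 \sim_{de} \epsilon'_2$ is verified by a similar triple-shuffle argument: for any test $\beta$ placed above $\epsilon'_i$, embed $(\epsilon'_i, \delta', \beta)$ in a big growth diagram and commute the shuffles; using the involution property of shuffling (the reverse shuffle of $(\epsilon'_i, \delta')$ is $(\delta, \epsilon_i)$) together with the $i$-independence of $\delta'$ and Proposition~\ref{thing3}, one sees that shuffling $(\epsilon'_i, \beta)$ produces the same $\beta$-side tableau for $i = 1,2$, verifying the defining condition of dual equivalence.

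The main obstacle is the triple-shuffle commutativity argument in the second case, since the paper's definition of dual equivalence is one-sided (controlling a varying inner tableau against a fixed outer one) and must be upgraded by iterated shuffles into the required symmetric form. All the work is in carefully tracking partitions at grid points of the big growth diagram along several monotone paths between fixed endpoints and combining the results with Proposition~\ref{thing3}.
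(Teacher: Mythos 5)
Your reduction to the two one\nobreakdash-variable cases and your handling of the first case (varying $\delta$ within its class against a fixed $\epsilon$) match the paper exactly: the stated definition gives $\epsilon'_1=\epsilon'_2$ and Proposition~\ref{thing3} gives $\delta'_1\sim\delta'_2$. The divergence, and the problem, is in your second case. The paper disposes of it in one line by reading the definition of dual equivalence and Proposition~\ref{thing3} in the \emph{reverse} direction: shuffling $(\delta,\epsilon_i)$ subjects the $\epsilon_i$ to a common sequence of inward slides, and Haiman's notion of dual equivalence (which is what the paper's one-sided definition stands in for, via the citations behind Propositions~\ref{thing1}--\ref{thing3}) is symmetric in slide direction --- two dual equivalent tableaux acquire the same shape after any common slide sequence, inward or outward. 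Hence the cells vacated at each step agree, so $\delta'_1=\delta'_2$ on the nose, and $\epsilon'_1\sim\epsilon'_2$ follows from the slide-invariance of dual equivalence.

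You instead try to derive this reverse-direction statement from the forward-direction definition by a triple-shuffle argument, and that is where the gap lies. The step ``reading off the two orderings as different monotone paths through the same growth diagram'' does not hold as stated: the triple $(\delta,\epsilon_i,\alpha)$ is read along a path whose segments alternate direction, and after performing either one of the two adjacent shuffles, the two tableaux you next want to shuffle lie along \emph{collinear} segments of the new path, so the second shuffle is not realized inside the same rectangular growth diagram. What your comparison of the two orders actually requires is the coherence of iterated switching --- that the compositions $s_1s_2s_1$ and $s_2s_1s_2$ of adjacent shuffles produce the same chain --- and this braid-type relation for tableau switching is a substantive theorem in its own right, not a formal consequence of the local growth rule on a rectangle. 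Your argument also never establishes that $\epsilon'_1$ and $\epsilon'_2$ (equivalently $\delta'_1$ and $\delta'_2$) have the same shape, which must be known before ``dual equivalent'' even makes sense for them. Both issues disappear if you invoke Haiman's symmetric formulation of dual equivalence from the outset, which is the paper's (tacit) route and renders the triple-shuffle detour unnecessary.
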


\begin{proof}
Let $\epsilon_1$ and $\epsilon_2$ be dual equivalent. Let $(\epsilon'_1, \delta')$ and $(\epsilon'_2, \delta')$ be the results of shuffling $(\delta, \epsilon_1)$ and $(\delta, \epsilon_2)$. (The second entry is the same in both cases by the definition of dual equivalence.) Then $\delta'$ is tautologically dual equivalent to itself, and $\epsilon'_1$ and $\epsilon'_2$ are dual equivalent by Proposition~\ref{thing3}. 

So replacing $\epsilon$ by a dually equivalent tableaux does not change the dual equivalence classes of $\delta'$ and $\epsilon'$. 
Similarly, replacing $\delta$  by a dually equivalent tableaux does not change the dual equivalence classes of $\delta'$ and $\epsilon'$. 
Combining these two facts, we have the claim.
\end{proof}

Thus, we may speak of \newword{shuffling two dual equivalence classes}, to obtain another pair of dual equivalence classes.

\section{Dual equivalence classes of cylindrical growth diagrams} \label{sec:degrowth}

Fix partitions $\lambda_1$, $\lambda_2$, \dots, $\lambda_r$ obeying $\sum_{k=1}^r |\lambda_k| = d(n-d)$. (We no longer are setting $r=d(n-d)$.) Let $\II$, as in section~\ref{sec:growth}, be $\{ (k,\ell) \in \ZZ^2 : k \leq \ell \leq k+r \}$. 

A \newword{dual equivalence cylindrical growth diagram}, which we will abbreviate to \newword{decgd}, consists of two pieces of data:
\begin{enumerate}
\item A map $\gamma: \II \to \Lambda$, such that $\gamma_{k\ell}$ is contained in $\gamma_{k(\ell+1)}$ and $\gamma_{(k-1)\ell}$ and
\item For every $(k,\ell) \in \II$ with $\ell-k < r$, a choice $a(k,\ell)$ and $b(k, \ell)$ of dual equivalence classes in $\SYT(\gamma_{k(\ell+1)}/\gamma_{k\ell})$ and $\SYT(\gamma_{(k-1)\ell}/\gamma_{k\ell})$ respectively.
\end{enumerate}
such that
\begin{enumerate}
\item $\gamma_{kk} = \emptyset$ and $\gamma_{k(k+r)} = \rect$.
%\item $\gamma_{k(k+1)} = \lambda_k$
%\item $\gamma_{k\ell} = \gamma'_{\ell(k+r)} = \gamma_{(k+r)(\ell+r)}$.
\item If we shuffle the dual equivalence classes $a(k,\ell)$ and $b(k, \ell+1)$, we obtain the dual equivalence classes $b(k,\ell)$ and $a(k-1,\ell)$.
\end{enumerate}
We will use indices $(k, \ell)$ for dual equivalence cylindrical growth diagrams, and reserve $(i,j)$ for ordinary cylindrical growth diagrams.
Note that, since the shape of $a(k,\ell)$ is $\gamma_{k(\ell+1)}/\gamma_{k\ell}$, the dual equivalence classes $a(k, \ell)$ and $b(k, \ell)$ determine the map $\gamma$.

The \newword{shape} of the decgd $\gamma$ is the sequence of partitions $(\gamma_{01}, \gamma_{12}, \ldots, \gamma_{(r-1)r})$.

The main result of this section is that, over the face of $\Mbar{r}(\RR)$ associated to the circular ordering $(s(1),\ldots, s(r))$, the maximal faces of $\cS(\lambda_1, \ldots, \lambda_r)(\RR)$ are indexed 
by decgds of shape $(\lambda_{s(1)}, \ldots, \lambda_{s(r)})$. 

Intuitively, we want decgds to be equivalence classes of cylindrical growth diagrams. We now explain how to map a cylindrical growth diagram $\gamma$ to a decgd, an operation we call \newword{restricting} $\gamma$.

Set $\tr = d(n-d)$ and $\tII = \{ (i,j)  \in \ZZ^2 : i \leq j \leq i+ \tr \}$. 
Let $\tg: \tII \to \Lambda$ be a cylindrical growth diagram. We will explain how to use $\tg$ to build a decgd which we will call the \newword{restriction of $\tg$}.
Let $\iota: \ZZ \to \ZZ$ be such that $\iota(\ell)- \iota(\ell-1) = |\lambda_{\ell}|$, with the index $\ell$ periodic modulo $r$ on the right hand side.
So $\iota(i+r) - \iota(i) = \sum_{\ell=1}^r |\lambda_{\ell}| = \tr$. 
Let $\gamma_{k \ell} = \tg_{\iota(k) \iota(\ell)}$; let $a(k,\ell)$  be the dual equivalence class of $\tg_{\iota(k) \iota(\ell)}$, $\tg_{\iota(k) (\iota(\ell)+1)}$, $\tg_{\iota(k) (\iota(\ell)+2)}$, \dots, $\tg_{\iota(k) \iota(\ell+1)}$; and let $b(k,\ell)$ be the dual equivalence class of $\tg_{\iota(k) \iota(\ell)}$, $\tg_{(\iota(k)-1) \ell}$, \ldots, $\tg_{\iota(k-1) \iota(\ell)}$.

%Note that, if we want to restore $\tg$ from $\gamma$, we just need to know how to lift the $a(k, \ell)$ and $b(k, \ell)$ on a single path through $\II$, as in Lemma~\ref {recursion}.

\begin{prop} \label{decgd basics}
Every decgd is the restriction of a cylindrical growth diagram. More specifically, choose a path $\delta$ through $\II$ and choose lifts $\tilde{a}(k,\ell)$ and $\tilde{b}(k, \ell)$ of $a(k, \ell)$ and $b(k, \ell)$ along $\gamma$; then there is a unique way to extend this lifting to cylindrical growth diagram. The number of decgd's of shape $(\lambda_1, \lambda_2, \ldots, \lambda_r)$ is the Littlewood-Richardson coefficient $c_{\lambda_1 \lambda_2 \cdots \lambda_r}^{\rect}$.
\end{prop}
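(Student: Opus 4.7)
The plan is to mimic the proof of Lemma~\ref{recursion} at the level of dual equivalence classes. For parts (1) and (2): fix a decgd $\gamma$, a path $\delta = ((i_0,j_0),\ldots,(i_r,j_r))$ through $\II$, and lifts $\tilde a(k,\ell)$, $\tilde b(k,\ell)$ of the dual equivalence classes along $\delta$. Concatenating these tableaux produces a chain of $\tr+1$ partitions in $\Lambda$, i.e.\ a path through $\tII$, which by Lemma~\ref{recursion} extends uniquely to a cylindrical growth diagram $\tg$. The content of (1) and (2) then reduces to the claim that the restriction of $\tg$ recovers $\gamma$, which I verify by induction on $\ell^1$-distance from $\delta$ within $\II$.

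The base case holds by the construction of $\tg$. For the inductive step, the crucial observation is that the local growth rule of cylindrical growth diagrams, iterated across a rectangular block of cells, implements exactly the shuffling operation of Section~\ref{sec:dual} on dual equivalence classes---essentially tautologically, since shuffling is defined by the same local recursion. Thus reading off the $a$, $b$ data from the restriction of $\tg$ at a newly reached cell of $\II$ amounts to shuffling two already-established dual equivalence classes, and by Proposition~\ref{de shuff} the outcome is independent of representatives. The decgd axioms for $\gamma$ prescribe exactly this same shuffle, so the dual equivalence classes match.

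For the count in (3), I apply (1) and (2) to the path $\delta = ((0,0),(0,1),\ldots,(0,r))$, consisting entirely of $a$-steps. A decgd of shape $(\lambda_1,\ldots,\lambda_r)$ then restricts along $\delta$ to a chain $\emptyset = \mu_0 \subset \mu_1 \subset \cdots \subset \mu_r = \rect$ with $|\mu_i/\mu_{i-1}| = |\lambda_i|$, together with dual equivalence classes $a(0,i-1) \in \SYT(\mu_i/\mu_{i-1})$. The shape condition $\gamma_{(k-1)k} = \lambda_k$ is equivalent to the condition that $a(0,k-1)$ rectifies to shape $\lambda_k$, because shuffling preserves slide equivalence (hence the rectification shape) and $a(0,k-1)$ shuffles along the diagonal to the straight tableau of shape $\gamma_{(k-1)k}$ at position $(k-1,k-1)$--$(k-1,k)$. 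By the standard count (derivable from Propositions~\ref{thing1}, \ref{thing4} and~\ref{thing5}), the number of dual equivalence classes in $\SYT(\mu_i/\mu_{i-1})$ with rectification shape $\lambda_i$ equals $c_{\mu_{i-1}\lambda_i}^{\mu_i}$, and summing over chains yields $\sum_{\mu_\bullet}\prod_i c_{\mu_{i-1}\lambda_i}^{\mu_i} = c_{\lambda_1\cdots\lambda_r}^{\rect}$ by iterated associativity of Littlewood--Richardson coefficients.

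The main obstacle will be the inductive step in (2): precisely identifying the iteration of the local growth diagram recursion across an arbitrary rectangle with the dual equivalence shuffle of Proposition~\ref{de shuff}. This is morally tautological, but making it rigorous requires careful bookkeeping of the lifts and their compatibility as the induction advances outward from $\delta$.
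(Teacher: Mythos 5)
Your proposal is correct and follows essentially the same route as the paper: extend the lifted path through $\tII$ uniquely via Lemma~\ref{recursion}, observe that the restriction recovers the original decgd because the growth-diagram recursion on a rectangle is by definition the shuffle and Proposition~\ref{de shuff} makes this representative-independent, and then count along the path $(0,0)\to(0,1)\to\cdots\to(0,r)$ using the fact that the number of dual equivalence classes in $\SYT(\mu_i/\mu_{i-1})$ rectifying to $\lambda_i$ is $c_{\mu_{i-1}\lambda_i}^{\mu_i}$ and the associativity of Littlewood--Richardson coefficients. The paper dismisses the inductive step you flag as the "main obstacle" in one sentence ("a simple inductive statement from the definition of shuffling dual equivalence classes"), so your level of detail there matches or exceeds the original.
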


\begin{proof}
Choose a path $\delta$ through $\II$ and choose lifts $\tilde{a}(k, \ell)$, $\tilde{b}(k, \ell)$ of $a(k, \ell)$ and $b(k, \ell)$ along this path.
By Lemma~\ref{recursion}, these lifts extend uniquely to a cylindrical growth diagram $\tilde{\gamma}$; what we need to know is that $\tilde{\gamma}$ restricts to the original $\gamma$. But this is a simple inductive statement from the definition of shuffling dual equivalence classes.

We now establish the claim about the Littlewood-Richardson coefficient. For notational convenience, choose our starting path $\delta$ to be $(0,0) \to (0,1) \to (0,2) \to \cdots \to (0,r)$.
Abbreviate $\gamma_{0j}$ to $\mu_j$ and $a(0,j)$ to $a(j)$. As in Proposition~\ref{recursion}, there is a unique $\gamma$ extending any choice of partitions $\mu_j$ and dual equivalence classes $a(j) \in \SYT(\mu_{j+1}/\mu_j)$. This unique $\gamma$ will have shape $(\lambda_1, \lambda_2, \ldots, \lambda_r)$ if and only if $\rsh(a(j)) = \lambda_j$. The number of dual equivalence classes in $ \SYT(\mu_{j+1}/\mu_j)$ with rectification shape $\lambda_{j+1}$ is $c_{\lambda_{j+1} \mu_j}^{\mu_{j+1}}$ (see~\cite[Theorem A.1.3.1]{EC2} and Proposition~\ref{thing5}.) So the number of decgd's of shape $(\lambda_1, \lambda_2, \ldots, \lambda_r)$ is 
$$\sum_{\emptyset =\mu_0 \subset \mu_1 \subset \mu_2 \subset \cdots \subset \mu_r=\rect} c_{\mu_0, \lambda_1}^{\mu_1} c_{\mu_1, \lambda_2}^{\mu_2} \cdots c_{\mu_{r-1}, \lambda_r}^{\mu_r} =c_{\lambda_1 \lambda_2 \cdots \lambda_r}^{\rect}$$
by the associativity of the Littlewood-Richardson coefficients.
\end{proof}

We now describe the indexing of maximal faces of $\cS(\lambda_1, \ldots, \lambda_r)(\RR)$ by decgds.
Embed $\Mbar{r}(\RR) \times \prod \Mbar{|\lambda_k|+1}(\RR)$ into $\Mbar{\tr}(\RR)$ by taking $(C, C_1, \ldots, C_r) \in \Mbar{r}(\RR) \times \prod \Mbar{|\lambda_k|+1}(\RR)$ and sending it to the stable curve in $\Mbar{\tr}(\RR)$ where we glue the $(|\lambda_k|+1)$th marked point of $C_k$ to the $k$th marked point of $C$.

Look at the family $\cS(\square, \square, \ldots, \square)$ over $\Mbar{\tr}(\RR)$. Restrict it to a family $\cX$ over $\Mbar{r}(\RR) \times \prod \Mbar{|\lambda_k|+1}(\RR)$. 
In the family $\cX$, the markings on the $r$ nodes of the central component are locally constant. 
Let $\cY$ be those components of $\cX$ where the markings on the central nodes are $(\lambda_1, \lambda_2, \ldots, \lambda_r)$.

Take the face $\sigma$ of $\Mbar{r}$ corresponding to the circular ordering $(1,2,\ldots, r)$. 
Let $\widetilde{\sigma}$ be some maximal face of $\Mbar{\tr}$ containing $\sigma \times  \prod \Mbar{|\lambda_k|+1}(\RR)$.
Let $\tau$ in $\cY$ lie over $\sigma$ and let $\widetilde{\tau}$ be the face over $\widetilde{\sigma}$ in $\cS(\square, \square, \cdots, \square)$ containing $\tau \times  \prod \Mbar{|\lambda_k|+1}(\RR)$ in its boundary.
Then $\tau$ is associated to some cylindrical growth pattern $\tg$. Restricting $\tg$ to $\II$ gives a decgd pattern $\gamma$ of shape $(\lambda_1, \ldots, \lambda_r)$. 

\begin{theorem} \label{dual labeling}
With the above notations, $\gamma$ depends only on $\tau$, not on the choice of $\widetilde{\tau}$. This gives a bijection between faces of $\cS(\lambda_1, \ldots, \lambda_r)$ over $\sigma$ and dual equivalence cylindrical growth patterns of shape $(\lambda_1, \ldots, \lambda_r)$. 
\end{theorem}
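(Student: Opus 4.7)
The plan is to establish the bijection by matching cardinalities and constructing an inverse. First, both sides have cardinality $c_{\lambda_1\cdots\lambda_r}^{\smallrect}$: Theorem~\ref{thm real family} asserts that $\cS(\lambda_{\bullet})(\RR)\to\Mbar{r}(\RR)$ is an unbranched cover, and its degree is the generic intersection number, which from Section~\ref{sec:notation} equals $c_{\lambda_1\cdots\lambda_r}^{\smallrect}$; over the connected maximal face $\sigma$ this means there are exactly $c_{\lambda_1\cdots\lambda_r}^{\smallrect}$ facets of $\cS(\lambda_{\bullet})(\RR)$ projecting onto $\sigma$. Proposition~\ref{decgd basics} gives the same count on the combinatorial side, so it suffices to show that $\tau\mapsto\gamma$ is well-defined and surjective.

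For well-definedness I would break the argument in two. The sublattice values $\gamma_{k\ell}=\tg_{\iota(k)\iota(\ell)}$ are determined by $\tau$ because they record partition labels on nodes of the degenerate stable curves in $\overline{\sigma}\times\prod\Mbar{|\lambda_k|+1}$ that separate the labels $[\iota(k)+1,\iota(\ell)]$ from their complement; by the argument of Lemma~\ref{constant label}, these labels are locally constant in any facet containing such a curve in its boundary, and are therefore forced by the common stratum $\tau\times\prod\Mbar{|\lambda_k|+1}$ shared by all candidate $\widetilde{\tau}$. The serious task is to show that the dual equivalence classes $a(k,\ell)$ and $b(k,\ell)$ also depend only on $\tau$.

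For this I would compare two choices $\widetilde{\tau},\widetilde{\tau}'$ lying over compatible extensions $\widetilde{\sigma},\widetilde{\sigma}'$ of $\sigma$; any two such extensions differ by a sequence of wall-crossings in $\Mbar{\tr}(\RR)$ each of which reverses an interval of the circular ordering lying inside a single block $m$. Proposition~\ref{wall cross} identifies the effect on $\tg$ as a reflection in a triangular region inside that block. The block-boundary indices $(\iota(k'),\iota(\ell'))$ lie outside this region and their entries are preserved, reconfirming the first part of the argument. The intermediate entries of the row tableau representing $a(k,m-1)$ do change inside block $m$, but I would argue that the change is realized by shuffling the row tableau past the adjacent column tableau representing $b(k+1,m-1)$; by Proposition~\ref{de shuff} such shuffles are well-defined on dual equivalence classes, so the class $a(k,m-1)$ is preserved. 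The analogous argument handles $b(k,\ell)$, and all classes not touching block $m$ are preserved trivially.

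Once well-definedness is established, surjectivity is immediate from Proposition~\ref{decgd basics}: given any decgd $\gamma$ of shape $(\lambda_1,\ldots,\lambda_r)$, pick any compatible $\widetilde{\sigma}$ and lift $\gamma$ along some path through $\II$ to a cylindrical growth diagram $\tg$; the shape condition on $\gamma$ is exactly $\tg_{\iota(k-1)\iota(k)}=\lambda_k$, placing the corresponding facet of $\cS(\square,\dots,\square)$ inside the subfamily $\cY$, and its restriction is by construction a facet mapping back to $\gamma$. Combined with the matching counts this proves the bijection. The main obstacle I anticipate is the shuffle identification in the previous paragraph: Proposition~\ref{wall cross} is stated globally on the cylindrical growth diagram, but the row tableau representing $a(k,m-1)$ has endpoints outside the reflected region and intermediate entries determined only implicitly through the growth recursion on mixed-index values, so verifying that the net change on the row is exactly a shuffle will require isolating an appropriate rectangular sub-diagram and unfolding the definitions of Section~\ref{sec:dual} carefully.
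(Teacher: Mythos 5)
Your overall architecture matches the paper's: count both sides by $c_{\lambda_1\cdots\lambda_r}^{\smallrect}$ (Proposition~\ref{decgd basics} on the combinatorial side, the degree of the cover on the geometric side), get surjectivity by lifting a decgd to a cylindrical growth diagram, and reduce everything to showing that the dual equivalence classes $a(k,\ell)$, $b(k,\ell)$ are unchanged under the wall-crossings relating two choices of $\widetilde\tau$. The gap is in that last step, and it is exactly the step you flag as the anticipated obstacle.

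Your argument for preserving the class of $a(k,m-1)$ is: ``the change is realized by shuffling the row tableau past the adjacent column tableau; by Proposition~\ref{de shuff} such shuffles are well-defined on dual equivalence classes, so the class is preserved.'' This is a non sequitur. Proposition~\ref{de shuff} says that the \emph{output} classes of a shuffle depend only on the \emph{input} classes; it does not say a shuffle fixes a class, and in general it cannot (a shuffle even changes the shape of each tableau in the pair, whereas $\hat a(k,m-1)$ has the same shape as $a(k,m-1)$). So even if you succeeded in exhibiting the change as a shuffle, you would have shown the new class is \emph{the shuffle of} the old one, not that it equals the old one. The paper's route avoids analyzing $a(k,m-1)$ for general $k$ altogether: it fixes the single full path $\widetilde b(m,m), \widetilde b(m-1,m),\ldots,\widetilde b(m-r,m)$ through $\II$. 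Under a wall-crossing inside block $m$, every tableau on that path except the first is \emph{literally} unchanged (its entries sit at block boundaries or outside block $m$, in the region Proposition~\ref{wall cross} preserves), while the first, $\widetilde b(m,m)$, is of straight shape $\lambda_m$, so its dual equivalence class is automatically preserved by Proposition~\ref{thing2}. Since a decgd is determined by its classes along one path (Proposition~\ref{decgd basics}, i.e.\ the shuffle recursion together with Proposition~\ref{de shuff}), all classes are preserved. That is the correct way to let Proposition~\ref{de shuff} do the propagation: it transports ``same class on one path'' to ``same class everywhere,'' rather than asserting that a shuffle fixes a class.

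Two smaller points. First, there is a second type of wall you do not address: crossings for which $\pi(p(t))$ stays in the same cell of $\tau\times\prod\Mbar{|\lambda_k|+1}(\RR)$, which reverse an entire block and interchange $\widetilde a(m-1,m-1)$ with $\widetilde b(m,m)$; the same straight-shape argument handles it, but it is a separate case. Second, your claim that any two extensions $\widetilde\sigma,\widetilde\sigma'$ differ by wall-crossings of the stated form needs the paper's tubular-neighborhood argument (connectedness of $\prod\Mbar{|\lambda_k|+1}(\RR)$ plus the fact that $\cS$ is a manifold) to guarantee a path through codimension-$\le 1$ cells all of whose maximal cells contain $\tau\times\prod\Mbar{|\lambda_k|+1}(\RR)$ in their closure.
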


\begin{remark} 
We describe faces of $\cS(\lambda_1, \ldots, \lambda_r)$ as boundaries of faces of $\cS(\square, \square, \cdots, \square)$. This is closely related to Sections~5 and~6 of \cite{Purb1}, which describe a limiting procedure where the marked points on $\Mbar{d(n-d)}$ collide. 
Because we have available $\Mbar{r}$ as a base space, rather than $(\PP^1)^r$, we can see precisely where the different dual equivalent growth diagrams wind up in this limiting process -- in the different parts of the $ \prod \Mbar{|\lambda_k|+1}(\RR)$ factor.
\end{remark}

\begin{proof}
By Proposition~\ref{decgd basics}, we know that we can lift $\gamma$ to a cylindrical growth diagram $\tg$.
Therefore, we have shown that for every dual equivalence cylindrical growth diagram $\gamma$, there is a face $\widetilde{\tau}$ in $\cS(\square, \square, \ldots, \square)$ which will give $\gamma$.
Our next goal is to show that $\gamma$ depends only on $\tau$ (the face of $\cY$), not $\widetilde{\tau}$ (the face of $\cS(\square, \cdots, \square)$).

Let $\widetilde{\tau}$ and $\widetilde{\tau}'$ be two faces containing $\tau$; let $\tg$ and $\tg'$ be the corresponding cylindrical growth patterns.
Let $\widetilde{a}(k,\ell)$ be the tableau $(\tg_{\iota(k), \iota(\ell)},\linebreak[0] \tg_{\iota(k), \iota(\ell)+1},\linebreak[0] \ldots, \linebreak[0] \tg_{\iota(k), \iota(\ell+1)})$, let $\widetilde{b}(k,\ell)$ be the tableau $(\tg_{\iota(k), \iota(\ell)},\linebreak[0] \tg_{\iota(k)-1, \iota(\ell)},\linebreak[0] \ldots, \linebreak[0] \tg_{\iota(k-1), \iota(\ell)})$ and define the primed objects similarly.
We want to show that $\widetilde{a}(k,\ell)$ is dual equivalent to $\widetilde{a}'(k,\ell)$, and the same for $\widetilde{b}(k, \ell)$ and $\widetilde{b}'(k,\ell)$.

Abbreviate $\cS(\square, \square, \ldots, \square)$ to $\cS$. 
Let $\mathcal{N}$ be a tubular neighborhood of  $\tau  \times  \prod \Mbar{|\lambda_k|+1}(\RR)$ and let $\pi$ be the projection from $\mathcal{N}$ onto  $\tau  \times  \prod \Mbar{|\lambda_k|+1}(\RR)$.
Take a path $p(t)$ from $\widetilde{\tau}$ to $\widetilde{\tau}'$ through $\mathcal{N}$ which only passes through cells of codimension $0$ and $1$. We are using that $\prod \Mbar{|\lambda_k|+1}(\RR)$ is connected and $\cS$ is a manifold.
Each maximal cell through which $p(t)$ passes is labeled by a cylindrical growth diagram. We claim that all of these diagrams are dual equivalent.

There are two types of walls we cross through. One type of wall is when $\pi(p(t))$ crosses a wall within one of the factors $ \Mbar{|\lambda_m|+1}(\RR)$. 
The edge labels $\widetilde{a}(k,\ell)$ and $\widetilde{b}(k, \ell)$ are unchanged when $k \leq m-1 < m \leq \ell$. 
Look at the path $\widetilde{b}(m,m)$, $\widetilde{b}(m-1,m)$, \ldots, $\widetilde{b}(m-r,m)$. The tableaux after the first position are preserved; the tableau in the first position is replaced by another one of the same straight shape. So, by Proposition~\ref{thing2}, we have preserved dual equivalence classes along this path and hence in the whole growth diagram.

The other type of wall is one where $\pi(p(t))$ remains in the same cell of  $\tau  \times  \prod \Mbar{|\lambda_k|+1}(\RR)$. When we cross such a wall, we reverse the circular ordering of some interval of points of length $\lambda_m$, and the effect on the growth diagram is to switch $\widetilde{a}(m-1,m-1)$ with $\widetilde{b}(m, m)$, while preserving $\widetilde{a}(k,\ell)$ and $\widetilde{b}(k, \ell)$ for $k \leq m-1 < m \leq \ell$.
As before, the dual equivalence classes of all the tableaux  on the path $\widetilde{b}(m,m)$, $\widetilde{b}(m-1,m)$, \ldots, $\widetilde{b}(m-r,m)$ are unchanged.
Again, the dual equivalence class of the cylindrical growth diagram is unchanged in this case.

%These tableaux are slides of $(\tg_{\iota(k), \iota(k)}(1), \tg_{\iota(k), \iota(k)+1}(1), \ldots, \tg_{\iota(k), \iota(k+1)}(1))$ and $(\tg_{\iota(k), \iota(k)}(2), \tg_{\iota(k), \iota(k)+1}(2), \ldots, \tg_{\iota(k), \iota(k+1)}(2))$. The latter tableaux are dual equivalent because they are both tableaux of straight shape $\lambda_k$ so, by Proposition~\ref{thing3}, the former tableaux are also dual equivalent. 

We have now shown that the map from faces $\tau$ to dual equivalence cylindrical growth diagrams is well defined. We must show that it is bijective.
Since we can always lift a decgd $\gamma$ to a cylidrical growth diagram $\tg$, the map is surjective.

The covering $\cY \to \Mbar{r}$ is of degree $c_{\lambda_1 \lambda_2 \cdots \lambda_r}^{\smallrect}$. This is also the number of dual equivalence cylindrical growth diagrams. So the correspondence between maximal faces and decgds is bijective.
\end{proof}

We can now describe the $CW$-structure on $\cS(\lambda_1, \lambda_2, \ldots, \lambda_r)$.
Its facets are indexed by pairs of a circular ordering of $[r]$ and a dual equivalence cylindrical growth diagram of shape $(\lambda_1, \ldots, \lambda_r)$.
Finally, we describe how these facets are glued together, generalizing Proposition~\ref{wall cross}.

Let $1 \leq p < q \leq r$ and suppose we cross a wall reversing the order of $s(q)$, $s(q+1)$, \dots, $s(p-1)$ (indices cyclic modulo $r$). 
Let $(\gamma, a, b)$ label the face on one side of the wall and $(\hat{\gamma}, \hat{a}, \hat{b})$ label the face on the other.

For $p \leq k \leq \ell \leq q$, we have $\gamma_{k\ell} = \hat{\gamma}_{k\ell}$; 
for $p \leq k \leq \ell < q$, we have $a(k, \ell) = \hat{a}(k, \ell)$; 
for $p < k \leq \ell \leq q$, we have $b(k, \ell) = \hat{b}(k, \ell)$.
For $q \leq k \leq \ell \leq p+r$, we have $\gamma_{k\ell} = \hat{\gamma}_{(p+q-\ell-1)(p+q-k-1)}$;
for $q \leq k \leq \ell < p+r$, we have $a(k,\ell) = \hat{b}(p+q-\ell-1, p+q-k-1)$ and $\hat{a}(k,\ell) = b(p+q-\ell-1, p+q-k-1)$, 

For proof, simply lift $\gamma$ to some $\tg$, cross the corresponding wall in $\cS(\square,\linebreak[0] \square,\linebreak[0] \cdots,\linebreak[0] \square)$, and apply Proposition~\ref{wall cross}.

\thebibliography{99}

\bibitem{Brion} Brion, Positivity in the Grothendieck group of complex flag varieties,
\emph{J. Algebra} \textbf{258} (2002), no. 1, 137--159. 

\bibitem{Devad} Devadoss, Combinatorial equivalence of real moduli spaces. \emph{Notices Amer. Math. Soc.} \textbf{51} (2004), no. 6, 620--628.

\bibitem{EH} Eisenbud and Harris, Divisors on general curves and cuspidal rational curves,
\emph{Invent. Math.} \textbf{74} (1983), no. 3, 371--418.

\bibitem{EG} Eremenko and Gabrielov, Rational functions with real critical points and the B. and M. Shapiro conjecture in real enumerative geometry,
\emph{Ann. of Math. (2)} \textbf{155} (2002), no. 1, 105--129.

\bibitem{Haiman} Haiman, Dual equivalence with applications, including a conjecture of Proctor,  \emph{Discrete Math.} \textbf{99} (1992), no. 1-3, 79--113.

\bibitem{HK} Henriques and Kamnitzer, The octahedron recurrence and $\mathfrak{gl}(n)$ crystals,
\emph{Adv. Math.} \textbf{206} (2006), no. 1, 211Ð249.

\bibitem{KK} Khovanov and Kuperberg, Web bases for $sl(3)$ are not dual canonical, \emph{Pacific J. Math.} \textbf{188} (1999), no. 1, 129--153.

\bibitem{KTW} Knutson, Tao and Woodward, A positive proof of the Littlewood-Richardson rule using the octahedron recurrence
\emph{Electron. J. Combin.} \textbf{11} (2004), no. 1, Research Paper 61

\bibitem{Kup} Kuperberg, Spiders for rank $2$ Lie algebras, \emph{Comm. Math. Phys.} \textbf{180} (1996), no. 1, 109--151. 

\bibitem{Mats} Matsumura, \emph{Commutative Ring Theory}, Cambridge Studies in Advanced Mathematics, \textbf{8}, Cambridge University Press, Cambridge, 1986.

\bibitem{MTV} Mukhon, Tarasov and Varchenko, The B. and M. Shapiro conjecture in real algebraic geometry and the Bethe ansatz,
\emph{Ann. of Math. (2)} \textbf{170} (2009), no. 2, 863--881.

\bibitem{PPR} Petersen, Pylyavskyy and Rhoades, Promotion and cyclic sieving via webs, \emph{J. Algebraic Combin.} \textbf{30} (2009), no. 1, 19--41.

\bibitem{Purb1} Purbhoo, Jeu de taquin and a monodromy problem for Wronskians of polynomials. \emph{Adv. Math.} \textbf{224} (2010), no. 3, 827--862.

\bibitem{Purb2} Purbhoo, Wronskians, cyclic group actions, and ribbon tableaux, \texttt{arXiv:1104.0870}.

\bibitem{Ramanathan} Ramanathan, Schubert varieties are arithmetically Cohen-Macaulay.
\emph{Invent. Math.} \textbf{80} (1985), no. 2, 283--294.

\bibitem{SottExp} Ruffo, Sivan, Soprunova and Sotille, Experimentation and Conjectures in the Real Schubert Calculus for Flag Mainfolds,  \emph{Experiment. Math.} \textbf{15} (2006), no. 2, 199Ð221.

\bibitem{EC2} Stanley, \emph{Enumerative Combinatorics 2} Cambridge Studies in Advanced Mathematics, \textbf{62} Cambridge University Press, Cambridge, 1999.

\bibitem{TY} Thomas and Yong, An $S_3$-symmetric Littlewood-Richardson rule. \emph{Math. Res. Lett.} \textbf{15} (2008), no. 5, 1027Ð1037.

\end{document}